\documentclass{amsart}
\usepackage{amsfonts,amsmath,amsthm,amssymb,tikz,xcolor,mathtools,extarrows,graphicx,stackrel}
\usepackage[normalem]{ulem}
\useunder{\uline}{\ul}{}
\usepackage[margin=1in]{geometry}
\usepackage[colorlinks=true,linkcolor=red!70!black,citecolor=green!70!black,urlcolor=magenta!70!black,backref]{hyperref}
\usepackage{colortbl}
\usepackage{hhline}
\usepackage{xcolor}
\usepackage{dsfont}
\usepackage{standalone}
\usepackage{comment}
\usepackage{wasysym} % for full and new moons
\usepackage{bbm} % for blackboard bold 1, k...
\usepackage{stmaryrd} % for Kh brackets
\usepackage{enumitem}
\usepackage{xargs}
\usepackage{color}
\usepackage[all, color, cmtip]{xy}
\usepackage{mathtools}
\usepackage{braket}
\usepackage[capitalize,nameinlink]{cleveref}
\usepackage{float} 
\newcommand{\hackcenter}[1]{\xy (0,0)*{#1}; \endxy}
\newcommand{\und}{\underline}

\colorlet{darkblue}{blue!70!black}
\colorlet{darkred}{red!70!black}
\colorlet{darkgreen}{green!70!black}
\colorlet{darkwhite}{white!65!black}
\colorlet{darkorange}{orange!70!black}

\colorlet{darkmagenta}{magenta!70!black}
\colorlet{pink}{green!20!magenta}

\colorlet{lightcyan}{cyan!15!white}
\colorlet{lightyellow}{yellow!30!white}
\colorlet{lightcyanb}{lightcyan!50!black}

\definecolor{purple}{rgb}{0.63, 0.36, 0.94} % official vertex color

\usetikzlibrary{decorations.markings,decorations.pathreplacing, decorations.pathmorphing, calligraphy,positioning,calc,arrows.meta,cd}

\numberwithin{equation}{section}

\newtheorem{thm}{Theorem}[section]
\newtheorem{lemma}[thm]{Lemma}
\newtheorem{proposition}[thm]{Proposition}
\newtheorem{corollary}[thm]{Corollary}
\newtheorem{cor}[thm]{Corollary}

\newtheorem{example}[thm]{Example}

\theoremstyle{definition}
\newtheorem{definition}[thm]{Definition}

\newtheorem{remark}[thm]{Remark}

\usepackage{bbm}
\def\C{{\mathbb{C}}}

\def\Z{{\mathbbm Z}}

 \def\1{\mathbbm{1}}%

\def\Q{{\mathbbm Q}}
\renewcommand{\to}{\rightarrow}
\newcommand{\maps}{\colon}

\renewcommand{\Im}{\mathrm{im}}
\renewcommand{\S}{Section }
\begin{document}

\title{Spectral geometry of Khovanov Laplacians}
 
\author[J. Grlj]{Jernej Grlj}
\address{Department of Mathematics\\
 University of Southern California \\
  Los Angeles, California 90089, USA}
  \email{grlj@usc.edu}
\author[A.D. Lauda]{Aaron D. Lauda}
\address{Department of Mathematics\\
 University of Southern California \\
  Los Angeles, California 90089, USA}
  \email{lauda@usc.edu}
\date{\today}

\begin{abstract}
For an oriented link diagram $D$, the Khovanov cochain complex carries a canonical Hermitian inner product that defines a combinatorial Hodge Laplacian. Its kernel is naturally isomorphic to Khovanov cohomology over $\C$, while its positive spectrum depends on the chosen diagram. Building on the numerical work of Jones and Wei, we develop a structural study of this diagram-dependent higher spectrum. In the minimal and maximal cube degrees, we identify the Khovanov Laplacian with signless Laplacians of explicit weighted graphs, up to diagonal sign conjugation in maximal degree. The graph model describes rational Khovanov classes and harmonic representatives through its bipartite components and gives inverse-polynomial gap bounds in fixed-width bands near the extremal $q$-degrees when the corresponding rational cohomology vanishes.  It also yields exact $\Theta(N^{-2})$ bidegree gaps for twisted unknots, odd $(2,N)$-torus knots, and even twist knots. The exact gap calculations are motivated in part by the spectral-resolution requirement in a recent quantum algorithm for Khovanov homology. We also prove a finite-dimensional analytic--combinatorial torsion correspondence for the Khovanov complex showing that an alternating product of nonzero Laplacian pseudodeterminants recovers integral Khovanov torsion in rationally acyclic $q$-degrees and differs from it by an explicit regulator in general. Finally, we transfer Lee's deformation to a filtered differential on the harmonic Khovanov subspace, giving a canonical harmonic model for the Lee spectral sequence.
\end{abstract}
\maketitle

\section{Introduction}

A standard way to extract structure from a space is to study the spectrum of an associated Laplacian. On a closed Riemannian manifold, the kernel of the Hodge--de Rham Laplacian consists of harmonic forms and computes de Rham cohomology, while the positive spectrum governs diffusion and reflects geometric features that cohomology alone cannot see. The same division appears in the discrete setting. Eckmann's combinatorial Laplacian has kernel isomorphic to rational cohomology~\cite{Eckmann1945}, while its positive eigenvalues record properties of the chosen combinatorial model, with applications ranging from spectral graph theory~\cite{Fiedler1973,chung1997spectral} to persistent Laplacians in topological data analysis~\cite{WangNguyenWei2020,Mmoli2022,WeiWei2025}. A round sphere and a sufficiently elongated ellipsoid, for example, have the same de Rham cohomology but different low-frequency spectra. In this familiar setting, the kernel sees topology, while the positive spectrum sees geometry.

This paper develops an analogous spectral viewpoint for Khovanov homology. Khovanov's categorification of the Jones polynomial~\cite{Khovanov2000} assigns to an oriented link diagram $D$ a finite-dimensional bigraded cochain complex $C_{\mathrm{KH}}(D)$ whose cohomology is an invariant of the underlying link $K$. We equip the enhanced-state basis with the Hermitian inner product for which the tensor-product basis in $1$ and $x$ is orthonormal. The resulting Khovanov Laplacian is
\begin{equation}\label{eq:intro-laplacian}
\Delta_i^{\mathrm{KH}}(D)
:=
\partial_i^{\dagger}\partial_i
+
\partial_{i-1}\partial_{i-1}^{\dagger},
\end{equation}
and we write $\Delta_{i,j}^{\mathrm{KH}}(D)$ for its restriction to the $q$-degree-$j$ summand. Combinatorial Hodge theory gives canonical identifications
\begin{equation}\label{eq:intro-hodge}
\ker\!\left(\Delta_{i,j}^{\mathrm{KH}}(D)\right)
\cong
\mathrm{KH}^{i,j}(K;\C).
\end{equation}
Thus each complex Khovanov class has a distinguished harmonic representative, and the multiplicity of the zero eigenvalue depends only on the link.

The operator~\eqref{eq:intro-laplacian} was introduced and studied numerically by Jones and Wei~\cite{Jones2025}. Their computations made clear that the positive spectrum depends on the chosen diagram. The crossingless unknot has zero differential and no positive Laplacian spectrum, while a one-crossing Reidemeister~I diagram of the same unknot has nonzero eigenvalues, see Example \ref{exam:twisted}. More generally, Reidemeister chain homotopy equivalences preserve cohomology but need not preserve adjoints, orthogonal complements, or Laplacian eigenvectors.  The Khovanov spectrum should therefore be viewed as spectral data attached to the cochain complex of a knot diagram, rather than the knot itself.

This diagram dependence begs the question of what information is encoded in the positive spectrum. Building on the numerical work of Jones and Wei, we develop a systematic structural study of the higher spectrum of the Khovanov Laplacian. At the bottom of the spectrum, we identify graph-theoretic mechanisms that govern harmonic representatives and small eigenvalues in the extreme cube degrees. At the level of the full positive spectrum, an alternating determinant recovers integral information that is absent from the complex Hodge kernel. The Hodge contraction also transfers Lee's deformation to the harmonic subspace, placing the Lee spectral sequence on canonical Khovanov representatives.

For a positive semidefinite operator $L$, write $\operatorname{gap}(L)$ for its smallest positive eigenvalue. In a Khovanov bidegree this has the Rayleigh-quotient description
\begin{equation}\label{eq:intro-gap}
\operatorname{gap}\!\left(\Delta_{i,j}^{\mathrm{KH}}(D)\right)
=
\min_{\substack{0\neq v\perp\ker\Delta_{i,j}^{\mathrm{KH}}(D)}}
\frac{\langle v,\Delta_{i,j}^{\mathrm{KH}}(D)v\rangle}{\langle v,v\rangle}
=
\min_{\substack{0\neq v\perp\ker\Delta_{i,j}^{\mathrm{KH}}(D)}}
\frac{\|\partial v\|^2+\|\partial^{\dagger}v\|^2}{\|v\|^2}.
\end{equation}
It measures how close a non-harmonic cochain can come to being both closed and coclosed. Our first results give a concrete description of this low-energy problem in the minimal and maximal cube degrees.

\subsection*{The low spectrum in the extreme cube degrees}

Graph Laplacians have appeared previously in knot theory through the coloring Laplacians of Silver and Williams~\cite{Silver2019}, and graph-theoretic properties of link diagrams also control parts of extremal Khovanov homology, as in the work of Sazdanovi\'c and Scofield on graph girth~\cite{Sazdanovi2022}. The graphs used here serve a different purpose: they give an exact matrix model for the Khovanov Laplacian itself in the extreme cube degrees.

In cube degree zero, we construct a finite weighted graph $G_{0,q}(D)$ such that
\begin{equation}\label{eq:intro-signless}
\widetilde\Delta_{0,q}(D)
=
Q\bigl(G_{0,q}(D)\bigr),
\qquad
Q(G)=D_G+A_G,
\end{equation}
where $Q(G)$ is the signless graph Laplacian, $A_G$ is the weighted adjacency matrix, and $D_G$ is the weighted degree matrix. In maximal cube degree, the corresponding Khovanov Laplacian is diagonally conjugate to a signless graph Laplacian (the conjugating signs account for the cube signs in the Khovanov differential). This identification converts the extreme-degree Hodge problem into spectral graph theory. Standard results for signless Laplacians~\cite{HAEMERS2004199,Desai} imply that loop-free bipartite components of $G_{0,q}(D)$ correspond to Khovanov classes over $\Q$ in the shifted bidegree, and a two-coloring of such a component produces the associated harmonic representative as a signed sum of vertices; see Theorem~\ref{thm:khovanov-bipartite} and Corollary~\ref{cor:harmonic-2-coloring}.

The same graph model gives quantitative control of the bottom eigenvalue. Suppose the rational Khovanov group in the corresponding shifted bidegree vanishes. Let $\ell_0$ be the number of circles in the all-zero resolution, and let $r$ be the number of labels that differ from a chosen extremal labeling. The graph in that $q$-degree has
\[
\lvert V(G_{0,q}(D))\rvert=\binom{\ell_0}{r}.
\]
Because the smallest positive edge or loop weight in these graphs is $1/2$, the Desai--Rao estimate gives
\begin{equation}\label{eq:intro-general-gap-bound}
\operatorname{gap}\!\left(\widetilde\Delta_{0,q}(D)\right)
\geq
\frac{1}{16d^*\lvert V(G_{0,q}(D))\rvert^2},
\end{equation}
where $d^*$ is the maximal weighted vertex degree. For a family of $N$-crossing diagrams with $\ell_0=O(N)$, this is an inverse-polynomial lower bound in every fixed-width band near an extremal $q$-degree, since $r$ is then bounded independently of $N$. In the middle $q$-degrees, however, $r$ can be of order $\ell_0/2$ and the binomial coefficient can be exponential in $N$. The Desai--Rao estimate alone therefore gives no inverse-polynomial lower bound in the bulk of the grading range.

There is also a concrete reason to seek exact asymptotics for these small eigenvalues. The quantum algorithm for Khovanov homology developed by Schmidhuber, Reilly, Zanardi, Lloyd, and the second author uses phase estimation to separate the harmonic subspace from the positive spectrum~\cite{schmidhuber2025quantumalgorithmkhovanovhomology}. Its spectral resolution therefore depends on the gap in the target bidegree. The numerical computations in that work single out natural diagram families and bidegrees in which the minimum gap appears to occur. This motivates an exact analysis of the corresponding low-energy matrices.

For the standard $N$-crossing diagrams considered in Sections~\ref{ssec:twisted-unknot-graphs}--\ref{ssec:even-twist-knots}, we obtain
\begin{align*}
\operatorname{gap}\!\left(\widetilde\Delta_{0,3-N}(TU_N)\right)
&=4\sin^2\!\left(\frac{\pi}{2(N+2)}\right),\\
\operatorname{gap}\!\left(\Delta_{N-1,3N-2}^{\mathrm{KH}}(T_{(2,N)})\right)
&=4\sin^2\!\left(\frac{\pi}{2N}\right)
\qquad (N\ \text{odd}),\\
\operatorname{gap}\!\left(\Delta_{N-2,3N-9}^{\mathrm{KH}}(\mathrm{Tw}_N)\right)
&=4\sin^2\!\left(\frac{\pi}{2(N-1)}\right)
\qquad (N\ \text{even}),
\end{align*}
for twisted unknots $TU_N$, $(2,N)$-torus knots $T_{(2,n)}$, and even-twist knots $Tw_N$. Each gap is $\Theta(N^{-2})$; see Theorems~\ref{thm:tu-gap}, \ref{thm:torus-spectral-gap}, and~\ref{thm:even-twist-knots}. These formulas establish an inverse-polynomial spectral resolution scale in the bidegrees selected by the numerical data.  But we do not prove that these bidegrees contain the global minimum for the corresponding diagram or obtain a uniform polynomial lower bound for all knots or all diagrams needed for the quantum algorithm.  We also do not address the state preparation and readout requirements of that algorithm.   

\subsection*{Integral information in the full nonzero spectrum}

The spectral gap records where the positive spectrum begins.   Classical torsion theory provides a model for how the nonzero spectrum can be packaged into a determinant like quantity.  
Reidemeister and Franz used combinatorial torsion to distinguish spaces with the same homology, most famously lens spaces~\cite{Reidemeister1935,Franz1935}. Ray and Singer later introduced analytic torsion as a spectral counterpart, defined through zeta-regularized determinants of Hodge Laplacians~\cite{RaySinger1971}. Cheeger and M\"uller proved that analytic and Reidemeister torsion agree for closed manifolds in the classical acyclic setting~\cite{Cheeger1979,Muller1978}. When cohomology is present, the comparison includes a regulator measuring the covolume of the integral cohomology lattice; this is part of the refined picture developed by Bismut and Zhang~\cite{BismutZhang1992}.

For a finite-dimensional cochain complex, no zeta regularization is required. The analytic expression used to define torsion is an ordinary alternating product of Laplacian pseudo-determinants (products of nonzero eigenvalues). In a fixed $q$-degree we set
\begin{equation}\label{eq:intro-tau}
\tau_j^{\C}(D)
=
\prod_i
\det\!{}'\!\left(\Delta_{i,j}^{\mathrm{KH}}(D)\right)^{(-1)^{i+1}i/2}.
\end{equation}
A finite-dimensional form of the analytic--combinatorial torsion correspondence, as developed for based complexes by Mnev~\cite{mnev2014lecturenotestorsions}, applies directly to the integral Khovanov complex. When computed in orthonormal bases Ortiz-Navarro's Khovanov volume form~\cite{ORTIZNAVARRO2012} agrees with $\tau_j^\C$. Our result identifies its integral meaning and makes the cohomological regulator explicit, justifying the computational observations from that work; see Remark \ref{rmk:ortiz_navarro_conj}.
 
More precisely, let $\tau_j^{\Z}(K)$ denote the torsion determined by the integral Khovanov cohomology, and let $V_j(g,f)$ be the covolume of a basis $g$ for the free part of the integral cohomology lattice measured in an orthonormal harmonic basis $f$. In the conventions of Section~\ref{sec:torsion-knots},
\begin{equation}\label{eq:intro-torsion-regulator}
\tau_j^{\C}(D)
=
\tau_j^{\Z}(K)\,V_j(g,f).
\end{equation}
When $\mathrm{KH}^{*,j}(K;\C)=0$, there is no regulator and the nonzero spectrum directly recovers the alternating product of the orders of the integral torsion subgroups. When rational cohomology is present, the regulator records the difference between the integral lattice and the orthonormal harmonic basis. Thus the individual positive eigenvalues remain diagram-dependent, while their alternating determinant recovers precise integral information after the lattice correction.

This shows that thought we work with rational (or complex) Khovanov homology, the higher spectrum packaged into combinatorial torsion still detects information coming from integral homology, even though the higher spectrum is diagram dependent.   

\subsection*{Filtered structure on harmonic Khovanov representatives}

Hodge theory provides more than the dimension of Khovanov cohomology: it selects a canonical representative of every complex class and supplies a contraction of the full cochain complex onto the harmonic subspace. Lee's deformation gives a natural setting in which to use this additional structure. Lee replaces the Khovanov differential by a filtered perturbation whose associated spectral sequence begins with rational Khovanov homology and converges to Lee homology~\cite{Lee2005}. The resulting filtration underlies Rasmussen's $s$-invariant and its bound on the smooth slice genus~\cite{Rasmussen2010}.

Let $\mathcal H=\ker\Delta^{\mathrm{KH}}$, let $P$ be orthogonal projection onto $\mathcal H$, let $\iota\colon\mathcal H\hookrightarrow C_{\mathrm{KH}}(D)$ be the inclusion, and let $h=\partial^{\dagger}G$ be the Hodge contracting homotopy defined using the Green operator $G$. For the Lee perturbation
\[
\Phi=\partial^{\mathrm{Lee}}-\partial^{\mathrm{KH}},
\]
the homological perturbation lemma transfers the Lee differential to
\begin{equation}\label{eq:intro-lee}
D_{\mathcal H}
=
P\Phi\sum_{r\geq 0}(-1)^r(h\Phi)^r\iota.
\end{equation}
The complex $(\mathcal H,D_{\mathcal H})$ is chain homotopy equivalent to the Lee complex, and the homogeneous terms of~\eqref{eq:intro-lee} compute the page differentials of the Lee spectral sequence; see Theorem~\ref{thm:lee-hodge-transfer} and the explicit perturbation framework of~\cite{Romero2026}. Proposition \ref{prop:lee-generators-not-kh-harmonic} shows that Lee's standard chain representatives are not generally harmonic for the Khovanov Laplacian, so this construction cannot be replaced by restricting the Lee differential to $\mathcal H$.

The transfer uses the Green operator, which inverts the Khovanov Laplacian on the positive spectral subspace. In this sense, the Lee model depends on the full Hodge contraction rather than only on the kernel. We leave for further work to extract an explicit efficient algorithm for computing $s$ in the harmonic realization.

\subsection*{Organization}

Section~\ref{sec:hodge} reviews finite-dimensional Hodge theory, Green operators, and homological perturbation. Section~\ref{sec:khovanov-background} fixes the Khovanov conventions and the Hermitian inner product used to define the Laplacian, including the diagram dependence of its positive spectrum. Section~\ref{sec:graph} develops the signless graph model, its structural consequences, the fixed-width extreme-degree gap bound, and the exact calculations for twisted unknots, odd $(2,N)$-torus knots, and even twist knots. Sections~\ref{sec:torsion-general} and~\ref{sec:torsion-knots} establish the finite-dimensional torsion correspondence and apply it to integral Khovanov homology. Section~\ref{sec:lee} transfers Lee's deformation to the harmonic Khovanov subspace. Section~\ref{sec:future} collects open problems.

\subsection*{Acknowledgments}

The authors are grateful to Anthony Licata for helpful conversations and for pointing out the connection between combinatorial Laplacians and Reidemeister torsion. We thank ICERM for its hospitality, where these conversations first took place. The authors thank Melissa Zhang for discussions related to spectral sequences of knot homologies. The authors thank Alexander Schmidhuber, Michele Reilly, Paolo Zanardi, and Seth Lloyd for helpful discussions where the extreme homological graph bounds were first developed. A.D.L.\ and J.G.\ were partially supported by NSF grant DMS-2200419 and DMS-2601214 and the Simons Foundation Collaboration Grant on New Structures in Low-Dimensional Topology MPS-LDT-00920756 and SFI-MPS-LDT-00014760-01. Computations associated with this project were conducted using the Center for Advanced Research Computing (CARC) at the University of Southern California and made use of Bar-Natan's KnotTheory package together with the knot encodings from KnotInfo.

\subsection*{On the use of AI models}
Large Language Models have been used to assist in the proof of Theorem \ref{thm:tu-gap}, help with literature search for Section \ref{sec:lee}, and to assist with writing and proofreading. All errors are ours.

%-------------------------------------------
\section{Combinatorial Hodge theory}\label{sec:hodge}
%-------------------------------------------
In this section we collect the combinatorial Hodge theory used in the rest of the paper, in the form developed by Eckmann~\cite{Eckmann1945} and Friedman~\cite{MR1622290}. The results hold over any field of characteristic zero. We work over $\C$.

Let $ C =(C^i,d_i)_{i\in\Z}$ be a cochain complex of finite-dimensional complex vector spaces in which each cochain space $C^i$ is equipped with an inner product $\langle\cdot,\cdot\rangle$. The adjoint $d_i^{\dagger}\maps C^{i+1}\to C^i$ is defined by
\[
\langle d_i(c),c'\rangle=\langle c,d_i^{\dagger}(c')\rangle
\]
for $c\in C^i$ and $c'\in C^{i+1}$. The differentials and their adjoints give rise to the combinatorial Laplacian
\begin{equation}
\Delta_i:=d_i^{\dagger}d_i+d_{i-1}d_{i-1}^{\dagger}.
\end{equation}
We call an $i$-cochain \emph{harmonic} if $\Delta_i c=0$. The subspace of harmonic $i$-cochains is denoted
\begin{equation}
\mathcal H^i:=\{c\in C^i\mid \Delta_i c=0\}.
\end{equation}

The summands of $\Delta_i$ have complementary support: $d_i^{\dagger}d_i$ annihilates $\mathcal H^i$ and $\Im(d_{i-1})$ while preserving $\Im(d_i^{\dagger})$, and $d_{i-1}d_{i-1}^{\dagger}$ annihilates $\mathcal H^i$ and $\Im(d_i^{\dagger})$ while preserving $\Im(d_{i-1})$.

The basic Hodge theorem in this setting~\cite[Propositions 2.1--2.2]{MR1622290} reads as follows.
\begin{thm}[Hodge decomposition]\label{thm_Hodge}
Given a complex $ C $ equipped with an inner product, each cochain space $C^i$ decomposes as
\begin{equation}\label{eq:Cidecomp}
C^i\cong \mathcal H^i\oplus\Im(d_{i-1})\oplus\Im(d_i^{\dagger}).
\end{equation}
Furthermore, there is an isomorphism $\mathcal H^i\cong \mathrm H^i( C )$, so each cohomology class is represented by a unique harmonic cochain in $\mathcal H^i$.
\end{thm}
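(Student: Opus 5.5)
The plan is to prove the decomposition from three orthogonality facts together with a dimension count, using only finite-dimensionality and the adjoint relation. First, $\Im(d_{i-1})\perp\Im(d_i^{\dagger})$. For $a\in C^{i-1}$ and $b\in C^{i+1}$ we have
\[
\langle d_{i-1}a,d_i^{\dagger}b\rangle=\langle d_id_{i-1}a,b\rangle=0,
\]
since $d_i d_{i-1}=0$. Second, I will identify the harmonic cochains: $\mathcal H^i=\ker d_i\cap\ker d_{i-1}^{\dagger}$. One inclusion is immediate. For the other, if $\Delta_i c=0$ then
\[
0=\langle \Delta_i c,c\rangle=\|d_ic\|^2+\|d_{i-1}^{\dagger}c\|^2,
\]
so both terms vanish. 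This positivity step is the only place the inner product is really used, and it is where working over $\C$ with a genuine Hermitian form matters.

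Next I will use the finite-dimensional fact that for any linear map $T$ between inner product spaces, $(\Im T)^{\perp}=\ker T^{\dagger}$. Applying this twice gives
\[
\bigl(\Im(d_{i-1})\oplus\Im(d_i^{\dagger})\bigr)^{\perp}=\ker d_{i-1}^{\dagger}\cap\ker d_i=\mathcal H^i .
\]
Since $C^i$ is finite dimensional, $C^i=W\oplus W^{\perp}$ for any subspace $W$. Taking $W=\Im(d_{i-1})\oplus\Im(d_i^{\dagger})$, which is an orthogonal sum by the first step, yields the orthogonal decomposition \eqref{eq:Cidecomp}.

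For the isomorphism with cohomology, I will again use $(\Im d_i^{\dagger})^{\perp}=\ker d_i$. Together with the decomposition this gives $\ker d_i=\mathcal H^i\oplus\Im(d_{i-1})$. The composite $\mathcal H^i\hookrightarrow\ker d_i\to \mathrm H^i(C)$ is therefore an isomorphism: it is surjective because every cocycle differs from a harmonic one by a coboundary, and it is injective because $\mathcal H^i\cap\Im(d_{i-1})=0$. Uniqueness of the harmonic representative is exactly this injectivity.

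I do not expect a genuine obstacle. The step needing the most care is the identity $(\Im T)^{\perp}=\ker T^{\dagger}$, which follows from $\langle Tx,y\rangle=\langle x,T^{\dagger}y\rangle$ for all $x$. Finite-dimensionality is what guarantees that images are closed, so orthogonal complements split the space.
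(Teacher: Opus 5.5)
Your argument is correct and complete. It uses the orthogonality of $\Im(d_{i-1})$ and $\Im(d_i^{\dagger})$, the identification $\mathcal H^i=\ker d_i\cap\ker d_{i-1}^{\dagger}$ via $\langle\Delta_i c,c\rangle=\|d_ic\|^2+\|d_{i-1}^{\dagger}c\|^2$, and $(\Im T)^{\perp}=\ker T^{\dagger}$. The paper gives no proof of its own and simply cites Friedman's Propositions 2.1--2.2, which rest on this same standard finite-dimensional Hodge argument.
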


In particular, Hodge theory implies that $\Delta_i$ is positive definite on the orthogonal complement
\[
(\mathcal H^i)^{\perp}=\Im(d_{i-1})\oplus\Im(d_i^{\dagger}).
\]
We write $\operatorname{gap}(\Delta_i)$ for the smallest positive eigenvalue of $\Delta_i$, whenever the positive spectrum is nonempty.

\begin{remark}\label{rmk:eigenvalue-inheritance}
Every nonzero eigenvalue of $\Delta_i$ on $(\mathcal H^i)^{\perp}$ is also an eigenvalue of either $\Delta_{i-1}$ on $(\mathcal H^{i-1})^{\perp}$ or $\Delta_{i+1}$ on $(\mathcal H^{i+1})^{\perp}$~\cite[Proposition 2.2]{MR1622290}, and occurs as an eigenvalue of either $d_i^{\dagger}d_i$ or $d_{i-1}d_{i-1}^{\dagger}$ on its respective image.
\end{remark}

In general, chain maps do not preserve the Hodge decomposition; for an explicit example see~\cite[Remark 3.1]{Liu}. Section~\ref{subsec:Hom-pert} below describes a modification that recovers a well-defined action on harmonic cochains.

%  - - - - -- - --  - -- -  --  - - -
\subsection{Strong deformation retracts and harmonic representatives}\label{subsec:Hom-pert}
%  - - - - -- - --  - -- -  --  - - -
Chain maps between inner-product complexes do not in general preserve the Hodge decomposition. We now describe a standard modification, following Liu--Li--Wu~\cite{Liu}, that yields a well-defined functor on harmonic cochains and supplies the contracting homotopy used in Section~\ref{sec:lee} to transfer Lee's perturbation to the harmonic Khovanov subspace.

\begin{definition}\label{def:SDR}
A chain map $\pi\maps C \to\mathcal D$ is a \emph{strong deformation retract} if there exist a chain map $\iota\maps\mathcal D\to C $ and a degree $-1$ map $h\maps C \to C $ satisfying
\[
\pi\iota=\mathrm{Id}_{\mathcal D},\qquad
\mathrm{Id}_{ C }-\iota\pi=dh+hd,\qquad
h\iota=0.
\]
The data $( C ,\mathcal D,\pi,\iota,h)$ is then a \emph{strong deformation retraction} of $ C $ onto $\mathcal D$, and $\iota$ is the \emph{inclusion in a strong deformation retract}. Without loss of generality we may further assume $\pi h=0$ and $h^2=0$.
\end{definition}

Consider the chain complex $\mathcal H=(\mathcal H^i,0)$ consisting of the harmonic $i$-cochains with trivial differential. The Hodge decomposition
\[
C^i=\Im(d_{i-1})\oplus\Im(d_i^{\dagger})\oplus\mathcal H^i
\]
applied to the cochain spaces of $ C =(C^i,d_i)$ gives an inclusion chain map $\iota\maps\mathcal H\to C $. By Theorem~\ref{thm_Hodge}, $\iota$ is a quasi-isomorphism: it induces an isomorphism
\[
\iota^*\maps \mathrm H^i(\mathcal H)=\mathcal H^i\to \mathrm H^i( C ).
\]

Over a field, any quasi-isomorphism extends to a strong deformation retraction. Define a projection chain map $\pi\maps C \to\mathcal H$ by
\[
\pi(\Im(d_{i-1}))=\pi(\Im(d_i^{\dagger}))=0,
\qquad
\pi(x)=x\quad\text{for }x\in\mathcal H^i,
\]
and linear maps $h\maps C^i\to C^{i-1}$ assembling into a chain homotopy $h\maps C \to C $ by
\[
h(\mathcal H^i)=h(\Im(d_i^{\dagger}))=0,
\qquad
h(d_{i-1}d_{i-1}^{\dagger}x)=d_{i-1}^{\dagger}x.
\]
The latter formula completely determines $h$, since every nonzero element of $\Im(d_{i-1})$ has the form $d_{i-1}d_{i-1}^{\dagger}x$. Set $P=\iota\pi$. Then $P^2=P$, and $P$ acts as zero on non-harmonic cochains and as the identity on $\mathcal H$.

\begin{proposition}[{\cite[Section 3]{Liu}}]\label{prop:defret}
The data $( C ,\mathcal H,\pi,\iota,h)$ defines a strong deformation retraction of $ C $ onto $\mathcal H$. In particular, $\pi\iota=\mathrm{Id}_{\mathcal H}$, $h^2=h\iota=\pi h=0$, and $\mathrm{Id}_{ C }-\iota\pi=dh+hd$.
\end{proposition}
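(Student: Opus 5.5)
The plan is to verify each identity directly on the three summands of the Hodge decomposition of Theorem~\ref{thm_Hodge},
\[
C^i=\mathcal H^i\oplus\Im(d_{i-1})\oplus\Im(d_i^{\dagger}).
\]
Since all maps involved are linear, it suffices to check each identity on each summand separately.

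\textbf{Well-definedness of $\pi$ and $h$.} First I will check that $\pi$ and $h$ are well defined. For $\pi$ this is immediate from the direct sum. For $h$, I will use that $d_{i-1}d_{i-1}^{\dagger}$ restricts to an isomorphism of $\Im(d_{i-1})$ onto itself. Indeed, it is self-adjoint and positive semidefinite, and if $d_{i-1}d_{i-1}^{\dagger}x=0$ then $\|d_{i-1}^{\dagger}x\|^2=0$. Hence its kernel is $\ker d_{i-1}^{\dagger}=\Im(d_{i-1})^{\perp}$, which meets $\Im(d_{i-1})$ trivially. Consequently, for $y\in\Im(d_{i-1})$ there is a unique $x\in\Im(d_{i-1})$ with $y=d_{i-1}d_{i-1}^{\dagger}x$, and I set $h(y)=d_{i-1}^{\dagger}x$. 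Any other preimage differs from $x$ by an element of $\ker d_{i-1}^{\dagger}$, so $h(y)$ does not depend on the choice. This lands in $\Im(d_{i-1}^{\dagger})\subset C^{i-1}$, which is the summand killed by $h$ and $\pi$ in degree $i-1$.

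\textbf{Chain maps and the easy identities.} Next I will check that $\iota$ and $\pi$ are chain maps. For $\iota$: harmonic cochains are closed, since $\Delta c=0$ gives $\|dc\|^2+\|d^{\dagger}c\|^2=0$. For $\pi$, I need $\pi d=0$. This holds because $d$ kills $\mathcal H^i$ and $\Im(d_{i-1})$, and sends $\Im(d_i^{\dagger})$ into $\Im(d_i)$, where $\pi$ vanishes. The identities $\pi\iota=\mathrm{Id}$ and $h\iota=0$ then hold by definition. For $\pi h=0$ and $h^2=0$, I use that $h$ takes values in $\Im(d^{\dagger})$, on which both $\pi$ and $h$ vanish.

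\textbf{The homotopy identity.} The main step is $\mathrm{Id}-\iota\pi=dh+hd$, which I will check on each summand:
\begin{itemize}
\item On $\mathcal H^i$, both sides are $0$, since $h$ and $d$ kill harmonic cochains.
\item On $y=d_{i-1}d_{i-1}^{\dagger}x\in\Im(d_{i-1})$, we have $hd\,y=0$ because $dy=0$. Also $dh\,y=d_{i-1}d_{i-1}^{\dagger}x=y$. This matches the left side, since $\pi y=0$.
\item On $z=d_i^{\dagger}w\in\Im(d_i^{\dagger})$, we have $dh\,z=0$. For $hd\,z$, I may take $w\in\Im(d_i)$: the component of $w$ in $\ker d_i^{\dagger}=\Im(d_i)^{\perp}$ does not affect $z$. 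Then $dz=d_id_i^{\dagger}w$, so by definition $h(dz)=d_i^{\dagger}w=z$.
\end{itemize}

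The only genuinely delicate point is the well-definedness of $h$, namely the invertibility of $dd^{\dagger}$ on $\Im(d)$. I will handle it by the kernel computation $\ker(dd^{\dagger})=\ker d^{\dagger}=\Im(d)^{\perp}$ given above. Everything else is bookkeeping on the decomposition.
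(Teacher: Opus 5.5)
Your proposal is correct. The paper gives no argument of its own here and simply cites~\cite[Section 3]{Liu}; your summand-by-summand check of each identity on the Hodge decomposition $C^i=\mathcal H^i\oplus\Im(d_{i-1})\oplus\Im(d_i^{\dagger})$ is the standard verification behind that citation. Your computation $\ker(d_{i-1}d_{i-1}^{\dagger})=\ker d_{i-1}^{\dagger}=\Im(d_{i-1})^{\perp}$ also makes explicit why $h$ is well defined, which the paper only asserts in the sentence before the proposition.
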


\begin{proof}
The argument of~\cite[Section 3]{Liu}, written there for differential graded inner-product spaces, applies verbatim.
\end{proof}

The chain homotopy $h$ refines the Hodge decomposition itself, in a way we record for later use.

\begin{proposition}[{\cite[Proposition 3.3]{Liu}}]\label{prop:Kdecomp}
The identity $\mathrm{Id}_{ C }=hd+dh+P$ refines the Hodge decomposition into
\[
 C =dh( C )\oplus hd( C )\oplus P( C ),
\]
with $P( C )=\mathcal H=\ker d\cap\ker d^{\dagger}$, $dh( C )=d( C )$, and $hd( C )=d^{\dagger}( C )$.
\end{proposition}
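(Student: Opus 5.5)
The plan is to verify the three claims directly from the definitions of $h$ and $P$ in Section~\ref{subsec:Hom-pert}, together with the Hodge decomposition of Theorem~\ref{thm_Hodge} and the identity $\mathrm{Id}_C=hd+dh+P$ from Proposition~\ref{prop:defret}. Since $P=\iota\pi$ is the projection onto $\mathcal H$ along $\Im(d_{i-1})\oplus\Im(d_i^{\dagger})$, and this decomposition is orthogonal, $P(C)=\mathcal H$. The description $\mathcal H=\ker d\cap\ker d^{\dagger}$ follows from the usual computation: for $c\in C^i$,
\[
\langle \Delta_i c,c\rangle=\|d_ic\|^2+\|d_{i-1}^{\dagger}c\|^2 .
\]
Hence $\Delta_i c=0$ forces $d_ic=0=d_{i-1}^{\dagger}c$, and the converse is immediate.

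Next I would compute $dh$ and $hd$ on each summand of $C^i=\mathcal H^i\oplus\Im(d_{i-1})\oplus\Im(d_i^{\dagger})$.

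\emph{The summand $\mathcal H^i$.} Both $dh$ and $hd$ vanish here: $h(\mathcal H^i)=0$, and $d$ kills harmonic cochains.

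\emph{The summand $\Im(d_{i-1})$.} Write an element as $y=d_{i-1}d_{i-1}^{\dagger}x$. This is possible because $d_{i-1}d_{i-1}^{\dagger}$ is invertible on $\Im(d_{i-1})$: it is self-adjoint, and its kernel meets $\Im(d_{i-1})$ only in $0$. Then $dh(y)=d_{i-1}d_{i-1}^{\dagger}x=y$, and $hd(y)=0$ since $dy=0$.

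\emph{The summand $\Im(d_i^{\dagger})$.} Here $h=0$, so $dh=0$ and $hd=\mathrm{Id}-dh-P=\mathrm{Id}$.

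Thus $dh$, $hd$ and $P$ are the three complementary projections onto $\Im(d_{i-1})$, $\Im(d_i^{\dagger})$ and $\mathcal H^i$. This gives $dh(C)=d(C)$, $hd(C)=d^{\dagger}(C)$, and the claimed direct-sum decomposition.

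As a consistency check I would also verify $hd=\mathrm{Id}$ on $\Im(d_i^{\dagger})$ directly. Take $z=d_i^{\dagger}w$ and write $d_iz=d_id_i^{\dagger}w$. Setting $x$ equal to the component of $w$ in $\Im(d_i)$, we get $d_iz=d_id_i^{\dagger}x$, so $h(d_iz)=d_i^{\dagger}x=d_i^{\dagger}w=z$, because $d_i^{\dagger}$ vanishes on $\Im(d_i)^{\perp}$. The main point requiring care is the well-definedness of $h$, namely that the preimage $x$ can be chosen in $\Im(d_{i-1})$. I would handle this by observing that $d_{i-1}^{\dagger}x$ depends only on the $\Im(d_{i-1})$-component of $x$, so $h$ is well defined.
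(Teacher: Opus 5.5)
Your proof is correct. The paper gives no argument of its own for this statement; it only cites \cite[Proposition 3.3]{Liu}. Your summand-by-summand check on $\mathcal H^i\oplus\Im(d_{i-1})\oplus\Im(d_i^{\dagger})$ is therefore a self-contained substitute for that citation, not a competing route.

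It also buys you something. You verify $hd=\mathrm{Id}$ on $\Im(d_i^{\dagger})$ directly, via $h(d_id_i^{\dagger}w)=d_i^{\dagger}w$, so you never need Proposition~\ref{prop:defret}. Your computation shows that $dh$, $hd$ and $P$ are the three complementary orthogonal projections onto $\Im(d_{i-1})$, $\Im(d_i^{\dagger})$ and $\mathcal H^i$. This reproves the identity $\mathrm{Id}_C=hd+dh+P$ rather than assuming it, and you can drop the line ``$hd=\mathrm{Id}-dh-P$''.

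Your discussion of why $h$ is well defined is valid, but it slightly conflates existence of a preimage $x$ with independence of the choice of $x$. The cleanest form: if $d_{i-1}d_{i-1}^{\dagger}x=d_{i-1}d_{i-1}^{\dagger}x'$, then
\[
\|d_{i-1}^{\dagger}(x-x')\|^2=\langle d_{i-1}d_{i-1}^{\dagger}(x-x'),x-x'\rangle=0,
\]
so $d_{i-1}^{\dagger}x=d_{i-1}^{\dagger}x'$.
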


The decomposition in Proposition~\ref{prop:Kdecomp} is what makes the next result possible: although chain maps need not preserve the Hodge decomposition, orthogonal projection recovers a well-defined action on harmonic cochains.

\begin{proposition}\label{prop:h-functor}
For an inner-product complex $ C $, write $\mathsf H( C )=\ker d\cap\ker d^{\dagger}$. Given a chain map $f\maps C \to\mathcal D$, define
\begin{align}
\mathsf H(f)\maps\mathsf H( C )&\to\mathsf H(\mathcal D),\\
x&\mapsto P_{\mathcal D}f(x),
\end{align}
where $P_{\mathcal D}$ is the orthogonal projection onto $\mathsf H(\mathcal D)$. Then $\mathsf H(f)$ is well-defined and compatible with composition, in the sense that $\mathsf H(g)\circ\mathsf H(f)=\mathsf H(g\circ f)$.
\end{proposition}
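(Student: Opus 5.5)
Well-definedness is immediate: $\mathsf H(\mathcal D)$ is a subspace of the finite-dimensional inner-product space $\mathcal D^i$ in each degree, so orthogonal projection $P_{\mathcal D}$ onto it exists and is linear. Hence $x\mapsto P_{\mathcal D}f(x)$ is a linear map $\mathsf H(C)\to\mathsf H(\mathcal D)$. The real content is functoriality, and I will get it from the relation between projection and cohomology.

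The key step is a lemma. For any closed cochain $y\in\ker d_{\mathcal D}$, the projection $P_{\mathcal D}y$ is the unique harmonic representative of the class $[y]$, so that $y-P_{\mathcal D}y\in\Im d_{\mathcal D}$. To prove it, decompose $y$ via Theorem~\ref{thm_Hodge} as $y=a+db+d^{\dagger}c$ with $a$ harmonic. Then $0=dy=dd^{\dagger}c$. Pairing with $c$ gives $\|d^{\dagger}c\|^2=\langle dd^{\dagger}c,c\rangle=0$, so $y=a+db$ and $P_{\mathcal D}y=a$. Equivalently, one may use Proposition~\ref{prop:Kdecomp}: $y=dhy+hdy+Py=dhy+Py$.

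With the lemma in hand, let $x\in\mathsf H(C)$. Then $f(x)$ is closed because $f$ is a chain map and $dx=0$, so $f(x)=\mathsf H(f)(x)+d_{\mathcal D}u$ for some $u$. Apply the chain map $g$:
\[
g f(x)=g\bigl(\mathsf H(f)(x)\bigr)+d_{\mathcal E}\,g(u).
\]
Now project with $P_{\mathcal E}$. Exact cochains lie in $\Im d$, which is orthogonal to the harmonic space, so $P_{\mathcal E}$ kills them. This gives
\[
\mathsf H(g\circ f)(x)=P_{\mathcal E}gf(x)=P_{\mathcal E}g\bigl(\mathsf H(f)(x)\bigr)=\mathsf H(g)\bigl(\mathsf H(f)(x)\bigr).
\]
One could also note that $\mathsf H(f)$ corresponds to $f^*$ under the isomorphisms $\mathcal H\cong\mathrm H$ of Theorem~\ref{thm_Hodge}, which gives functoriality directly; the computation above just makes that explicit. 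If identities are wanted as well, $\mathsf H(\mathrm{Id})=P$ restricted to the harmonic space is the identity.

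The main obstacle is the lemma: projecting an arbitrary element does \emph{not} in general give a cohomologous element, since the $\Im d^{\dagger}$ component is dropped. So the argument must use closedness of $f(x)$, which relies on $x$ being closed and $f$ being a chain map, in order to rule out a $d^{\dagger}$ component. The positivity argument $\|d^{\dagger}c\|^2=0$ handles this.
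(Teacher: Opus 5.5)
Your proof is correct and is essentially the argument the paper defers to (Liu--Li--Wu, Lemma~3.4). There, the closed cochain $f(x)$ is written as $P_{\mathcal D}f(x)$ plus an exact term using $\mathrm{Id}=dh+hd+P$ from Proposition~\ref{prop:Kdecomp}, which your Hodge-decomposition lemma with the step $\|d^{\dagger}c\|^2=0$ reproduces; the exact term is then pushed through $g$ by the chain-map property and killed by $P_{\mathcal E}$, exactly as in your computation.
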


\begin{proof}
The proof is identical to that of~\cite[Lemma 3.4]{Liu}, given there in the context of differential graded inner-product spaces.
\end{proof}

\begin{remark}
Proposition~\ref{prop:h-functor} says that $\mathsf H$ is a functor from the category of chain complexes and chain maps to the category of vector spaces and linear maps. In~\cite[Proposition 3.5]{Liu}, this functor is shown to be naturally isomorphic to the usual cohomology functor, giving a direct connection between maps induced on cohomology and maps induced on harmonic cochains.
\end{remark}

\section{Khovanov homology}\label{sec:khovanov-background}

In this section we recall Khovanov's categorification of the Jones polynomial, fix conventions used throughout the paper, and discuss the canonical inner product that equips each Khovanov cochain space with the structure of a Hilbert space. Our exposition follows Bar-Natan~\cite{Bar_Natan_2005} (see also Khovanov's original paper~\cite{Khovanov2000}). Although Khovanov's original construction is naturally a cohomology theory, it is conventional in the literature to refer to it as Khovanov \emph{homology}, and we follow this usage.

\subsection{The cube of resolutions}\label{ssec:kauffman}

Let $D$ be a planar diagram of an oriented link $K$, with $N$ ordered crossings. The Kauffman bracket replaces each crossing locally, in one of two ways, by a pair of non-intersecting arcs:
\begin{equation}\label{eq:smoothings}
\hackcenter{\begin{tikzpicture}[scale=0.55]
\draw[very thick] (0,0) -- (1,1);
\draw[very thick, white, line width=5pt] (1,0) -- (0,1);
\draw[very thick] (1,0) -- (0,1);
\node at (0.5,-0.55) {\scriptsize crossing};
\end{tikzpicture}}
\;\;\rightsquigarrow\;\;
\hackcenter{\begin{tikzpicture}[scale=0.55]
\draw[very thick] (0,0) .. controls (0.5,0.45) .. (1,0);
\draw[very thick] (0,1) .. controls (0.5,0.55) .. (1,1);
\node at (0.5,-0.55) {\scriptsize $0$-smoothing};
\end{tikzpicture}}
\qquad\text{or}\qquad
\hackcenter{\begin{tikzpicture}[scale=0.55]
\draw[very thick] (0,0) .. controls (0.45,0.5) .. (0,1);
\draw[very thick] (1,0) .. controls (0.55,0.5) .. (1,1);
\node at (0.5,-0.55) {\scriptsize $1$-smoothing};
\end{tikzpicture}}
\end{equation}
Throughout the paper we use the convention shown in~\eqref{eq:smoothings}, with the corresponding cube orientation and signs fixed in Section~\ref{ssec:khovanov-complex}, following~\cite{Bar_Natan_2005}.

A choice of smoothing at every crossing is recorded by a vector $r\in\{0,1\}^N$ called a \emph{state}, and the resulting crossingless diagram $D_r$ is a disjoint union of unknotted circles in the plane. We write
\[
\ell(r):=\text{the number of circles in the resolution }D_r.
\]
The $2^N$ states form the vertices of the Boolean cube $\{0,1\}^N$, oriented along edges by increasing Hamming weight $|r|=\sum_i r_i$. Flipping a single bit $r\rightsquigarrow r'$ from $0$ to $1$ produces $D_{r'}$ from $D_r$ either by fusing two circles into one, in which case $\ell(r')=\ell(r)-1$, or by splitting one circle into two, in which case $\ell(r')=\ell(r)+1$. See~\cite[Figure 2]{schmidhuber2025quantumalgorithmkhovanovhomology} for a worked example on the Hopf link.

\subsection{The Frobenius algebra \texorpdfstring{$V$}{V}}\label{ssec:frobenius}

Khovanov's cochain complex is built by replacing each circle in a resolved diagram $D_r$ by a copy of the two-dimensional graded vector space
\begin{equation}\label{eq:V-defn}
V=\C[x]/(x^2)\cong\C\langle 1,x\rangle,
\end{equation}
with $q$-grading $\deg(1)=1$ and $\deg(x)=-1$. As an ungraded algebra, $V$ is isomorphic to $\mathrm H^*(\mathbb{CP}^1;\C)$; the displayed $q$-grading is the conventional Khovanov regrading. The vector space $V$ carries the structure of a graded commutative Frobenius algebra. Its multiplication $m\colon V\otimes V\to V$ is given by
\begin{equation}\label{eq:multiplication}
m(1\otimes 1)=1,\qquad m(1\otimes x)=m(x\otimes 1)=x,\qquad m(x\otimes x)=0,
\end{equation}
and its comultiplication $\Delta_V\colon V\to V\otimes V$ is given by
\begin{equation}\label{eq:comultiplication}
\Delta_V(1)=1\otimes x+x\otimes 1,\qquad \Delta_V(x)=x\otimes x.
\end{equation}
The unit is $1\in V$, and the counit is the linear functional $\varepsilon\colon V\to\C$ with $\varepsilon(1)=0$ and $\varepsilon(x)=1$.

\subsection{The canonical inner product on Khovanov cochain spaces}\label{ssec:inner-product}

The Frobenius algebra structure on $V$ supplies a canonical symmetric bilinear pairing, the \emph{Frobenius pairing},
\begin{equation}\label{eq:frobenius-pairing}
B\colon V\otimes V\longrightarrow\C,\qquad B(a,b)=\varepsilon(a\cdot b).
\end{equation}
This is the Poincar\'e pairing under the ungraded algebra identification $V\cong\mathrm H^*(\mathbb{CP}^1;\C)$. A direct calculation gives the matrix of $B$ in the basis $\{1,x\}$,
\[
\bigl(B(e_i,e_j)\bigr)_{i,j}
=
\begin{pmatrix}
\varepsilon(1\cdot 1)&\varepsilon(1\cdot x)\\
\varepsilon(x\cdot 1)&\varepsilon(x\cdot x)
\end{pmatrix}
=
\begin{pmatrix}0&1\\1&0\end{pmatrix},
\]
with eigenvalues $\pm1$. Although nondegenerate, the Frobenius pairing is \emph{indefinite} and does not equip $V$ with the structure of a complex Hilbert space. Using the Frobenius pairing $B$ as an inner product on Khovanov cochain spaces leads to a Krein space in which the Hodge decomposition of Theorem~\ref{thm_Hodge} fails and the Laplacian is not positive semidefinite.

For the spectral analysis carried out in this paper, we instead use the positive-definite Hermitian inner product on $V$ for which $\{1,x\}$ is an orthonormal basis,
\begin{equation}\label{eq:hermitian-pairing}
\langle 1,1\rangle=\langle x,x\rangle=1,
\qquad
\langle 1,x\rangle=\langle x,1\rangle=0.
\end{equation}
This Hermitian metric is determined by the distinguished integral basis $\{1,x\}$ and is different from the Frobenius pairing. It extends multiplicatively to all tensor powers $V^{\otimes k}$. For any state $r$, the circle space $V^{\otimes\ell(r)}$ thus inherits a canonical Hilbert-space structure, and so does the total Khovanov cochain space.

\subsection{The Khovanov complex and its Laplacian}\label{ssec:khovanov-complex}

To each state $r\in\{0,1\}^N$, Khovanov associates the graded vector space $V^{\otimes\ell(r)}$, with an overall $q$-degree shift recording the Hamming weight $|r|$. Before the writhe normalization, the total cube-graded cochain space is
\begin{equation}\label{eq:khovanov-cochain}
\widetilde C(D)
=
\bigoplus_{r\in\{0,1\}^N}V^{\otimes\ell(r)}\{|r|\}.
\end{equation}
We write $\widetilde C^{k,q}(D)$ for its summand in cube degree $k=|r|$ and unnormalized $q$-degree $q$. The differential $\widetilde\partial$ is built from the edges of the cube. Each edge $r\to r'$ acts by the multiplication $m$ on the relevant tensor factors when $D_r\to D_{r'}$ fuses two circles, and by the comultiplication $\Delta_V$ when it splits one circle, with the identity acting on all remaining tensor factors. With the standard sign conventions of~\cite[\S 2]{Bar_Natan_2005}, $\widetilde\partial^2=0$.

Let $n_+(D)$ and $n_-(D)$ denote the numbers of positive and negative crossings of $D$. The normalized Khovanov bidegrees $(i,j)$ are related to the cube bidegrees $(k,q)$ by
\begin{equation}\label{eq:grading-translation}
i=k-n_-(D),
\qquad
j=q+n_+(D)-2n_-(D).
\end{equation}
We denote the resulting normalized cochain complex over a base ring $R$ by $C_{\mathrm{KH}}^{i,j}(D;R)$ and its differential by $\partial$. Its cohomology
\begin{equation}\label{eq:khovanov-cohomology}
\mathrm{KH}^{i,j}(K;R)
=
\mathrm H^i\bigl(C_{\mathrm{KH}}^{*,j}(D;R),\partial\bigr)
\end{equation}
is the link-invariant Khovanov homology~\cite{Khovanov2000}. We omit $R$ from the notation when it is clear from context and work over $R=\C$ unless stated otherwise. The Hermitian inner product~\eqref{eq:hermitian-pairing} extends multiplicatively to both the cube-graded and normalized cochain spaces after extension of scalars to $\C$.

\begin{definition}\label{khovanovlaplacian}
The normalized Khovanov Laplacian in bidegree $(i,j)$ is
\begin{equation}\label{eq:khovanov-laplacian}
\Delta_{i,j}^{\mathrm{KH}}(D)
:=
\partial_{i,j}^{\dagger}\partial_{i,j}
+
\partial_{i-1,j}\partial_{i-1,j}^{\dagger}.
\end{equation}
We write $\Delta_i^{\mathrm{KH}}(D)=\bigoplus_j\Delta_{i,j}^{\mathrm{KH}}(D)$. Likewise, $\widetilde\Delta_{k,q}(D)$ denotes the Laplacian of the unnormalized cube complex in bidegree $(k,q)$. Under the canonical identification supplied by~\eqref{eq:grading-translation},
\begin{equation}\label{eq:laplacian-shift}
\widetilde\Delta_{k,q}(D)
=
\Delta_{k-n_-(D),\,q+n_+(D)-2n_-(D)}^{\mathrm{KH}}(D)
\end{equation}
as matrices.
\end{definition}

\begin{remark}\label{rmk:hodge-khovanov}
Jones and Wei introduced the operator~\eqref{eq:khovanov-laplacian} and studied its spectrum numerically for small knots~\cite{Jones2025}. Each Khovanov Laplacian is self-adjoint and positive semidefinite. Combinatorial Hodge theory gives canonical isomorphisms
\[
\ker\bigl(\Delta_i^{\mathrm{KH}}(D)\bigr)\cong\mathrm{KH}^i(K;\C),
\qquad
\ker\bigl(\Delta_{i,j}^{\mathrm{KH}}(D)\bigr)\cong\mathrm{KH}^{i,j}(K;\C).
\]
The kernel therefore depends only on the underlying link $K$, not on the choice of diagram $D$. By contrast, the higher spectrum depends on $D$ and is not a link invariant. Identifying the topological information that the higher spectrum nonetheless carries is the central concern of the following sections. For example, the crossingless unknot has zero differential and hence no nonzero Laplacian spectrum, whereas a one-crossing Reidemeister~I diagram of the unknot has nonzero Laplacian eigenvalues $1$ and $2$; see Example~\ref{exam:twisted}.

The higher spectrum is not functorial under Reidemeister chain homotopy equivalences. A chain homotopy equivalence preserves cohomology, but it need not preserve adjoints, orthogonal complements, or Laplacian eigenvectors. In particular, one should not expect a Reidemeister move to induce monotonicity of the nonzero spectrum or of the spectral gap. Even when an adjoint-compatible map exists in a local summand, this property need not be preserved after the map is inserted into the tensor-product structure of the full Khovanov cube.
\end{remark}

\section{Spectral gaps via signless graph Laplacians}\label{sec:graph}

Since the higher spectrum of the Khovanov Laplacian is diagram-dependent and is not controlled by Reidemeister chain maps, quantitative statements require fixed diagrammatic models. In this section we identify the Khovanov Laplacian in the minimal and maximal cube degrees with signless graph Laplacians attached to the diagram, up to diagonal sign conjugation in maximal cube degree, and use this identification to derive both structural and quantitative consequences.

The structural results, collected in Section~\ref{ssec:structural}, identify rational Khovanov cohomology classes in the shifted extreme bidegrees with loop-free bipartite components of the associated graph (Theorem~\ref{thm:khovanov-bipartite}), produce explicit harmonic representatives via $\pm1$ vertex two-colorings (Corollary~\ref{cor:harmonic-2-coloring}), and extend from the all-zero to the all-one resolution after accounting for the cube signs by diagonal conjugation (Proposition~\ref{prop:max-degree}).

The quantitative side, developed in Sections~\ref{ssec:weighted-graph-bounds}--\ref{ssec:psi-bound}, recalls the bipartiteness-defect bound of Desai and Rao~\cite{Desai}, constructs the relevant graph $G_{n,k}(D)$ from a link diagram, and gives a fixed-width inverse-polynomial lower bound near the extremal $q$-degrees when the corresponding rational Khovanov cohomology vanishes (Proposition~\ref{prop:gap-when-no-Q-cohomology}). Sections~\ref{ssec:twisted-unknot-graphs}--\ref{ssec:even-twist-knots} compute the graphs, and the resulting bidegree gap formulas, for twisted unknots $TU_N$, odd $(2,N)$-torus knots $T_{(2,N)}$, and even twist knots $\mathrm{Tw}_N$.

% - - - - - - - - - - - - - - - - - -
\subsection{Structural consequences: bipartiteness, harmonic representatives, and duality}\label{ssec:structural}
% - - - - - - - - - - - - - - - - - -

Throughout this section, the \emph{signless graph Laplacian} of a finite weighted graph $G$ with weighted adjacency matrix $A_G$ and weighted degree matrix $D_G$ is
\begin{equation}\label{eq:signless-laplacian-defn}
Q(G):=D_G+A_G,
\end{equation}
in contrast to the standard graph Laplacian $L(G)=D_G-A_G$. Loops and positive weights are permitted. The precise conventions, together with the bipartiteness-defect bound that controls $\lambda_{\min}(Q(G))$, are recalled in Section~\ref{ssec:weighted-graph-bounds}.

The construction of Section~\ref{ssec:zero-degree-graph} assigns to each diagram $D$ and each unnormalized $q$-degree $q$ a finite weighted graph $G_{0,q}(D)$ such that the Khovanov Laplacian in cube degree zero satisfies
\begin{equation}\label{eq:khovanov-Q-identification}
\widetilde\Delta_{0,q}(D)=Q\bigl(G_{0,q}(D)\bigr).
\end{equation}
The vertex set of $G_{0,q}(D)$ indexes the standard basis of $V^{\otimes\ell(\mathbf 0)}$ in unnormalized $q$-degree $q$, where $\ell(\mathbf 0)$ is the number of circles in the all-zero resolution. Edges correspond to multiplication crossings, and loops correspond to comultiplication crossings. We record three structural consequences of~\eqref{eq:khovanov-Q-identification} before turning to the detailed construction.

\subsubsection{Bipartiteness corresponds to rational cohomology}

For any weighted graph $G$ that may contain loops,
\begin{equation}\label{eq:Q-kernel-bipartite}
\dim\ker Q(G)
=
\#\bigl\{\text{loop-free bipartite connected components of }G\bigr\}.
\end{equation}
This is standard in spectral graph theory~\cite{HAEMERS2004199,Desai}. A loop counts as an odd cycle of length one, so any component containing a loop or an odd cycle contributes nothing to $\ker Q(G)$. Each loop-free bipartite component contributes a one-dimensional kernel, spanned by the $\pm1$ vertex labeling obtained from a two-coloring.

\begin{thm}\label{thm:khovanov-bipartite}
Let $D$ be a diagram of an oriented link $K$, and let $q\in\Z$. Then
\begin{equation}\label{eq:khovanov-bipartite}
\dim_{\Q}\mathrm{KH}^{-n_-(D),\,q+n_+(D)-2n_-(D)}(K;\Q)
=
\#\bigl\{\text{loop-free bipartite connected components of }G_{0,q}(D)\bigr\}.
\end{equation}
In particular, the Khovanov group on the left vanishes if and only if every connected component of $G_{0,q}(D)$ contains a loop or an odd cycle.
\end{thm}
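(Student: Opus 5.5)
The argument chains three identifications: Hodge theory in cube degree zero, the signless-Laplacian model \eqref{eq:khovanov-Q-identification}, and a base change from $\C$ to $\Q$.

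\emph{Step 1: Hodge theory in cube degree zero.} Cube degree $k=0$ corresponds, by \eqref{eq:grading-translation}, to normalized homological degree $i=-n_-(D)$. The unnormalized $q$-degree $q$ corresponds to $j=q+n_+(D)-2n_-(D)$. By \eqref{eq:laplacian-shift} and Remark~\ref{rmk:hodge-khovanov} (Theorem~\ref{thm_Hodge} applied to the finite-dimensional complex $C_{\mathrm{KH}}^{*,j}(D;\C)$), we get
\[
\dim_{\C}\ker\widetilde\Delta_{0,q}(D)=\dim_{\C}\mathrm{KH}^{-n_-(D),\,q+n_+(D)-2n_-(D)}(K;\C).
\]
There is no incoming differential in cube degree zero, so this kernel is simply $\ker\widetilde\partial_{0,q}$. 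The Hodge statement therefore reduces to the elementary fact $\ker(\partial^\dagger\partial)=\ker\partial$.

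\emph{Step 2: passage to the graph.} Next invoke the identification \eqref{eq:khovanov-Q-identification}, $\widetilde\Delta_{0,q}(D)=Q(G_{0,q}(D))$, from the construction of Section~\ref{ssec:zero-degree-graph}, taken as given. Then apply the standard fact \eqref{eq:Q-kernel-bipartite} about signless Laplacians with positive weights and loops. Its proof is the identity
\[
\langle v,Q(G)v\rangle=\sum_{uv\in E}w_{uv}(v_u+v_v)^2+\sum_{\text{loops at }u}c\,w_{uu}v_u^2 ,
\]
with $c>0$ depending on the loop convention. A kernel vector must therefore satisfy $v_u=-v_{u'}$ across every edge and vanish at looped vertices. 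On a connected component, this forces $v$ to be a multiple of a $\pm1$ two-coloring if the component is loop-free bipartite, and forces $v=0$ otherwise, since an odd cycle gives $v_u=-v_u$. This yields the count on the right of \eqref{eq:khovanov-bipartite} for $\dim_{\C}$.

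\emph{Step 3: from $\C$ to $\Q$.} The Khovanov complex is defined over $\Z$ and is finite-dimensional, so $C_{\mathrm{KH}}(D;\C)=C_{\mathrm{KH}}(D;\Q)\otimes_{\Q}\C$. Since $\C$ is flat over $\Q$ (universal coefficients over a field), $\dim_{\Q}\mathrm{KH}^{i,j}(K;\Q)=\dim_{\C}\mathrm{KH}^{i,j}(K;\C)$. Alternatively, the rank of $\widetilde\partial_{0,q}$, an integer matrix, is the same over $\Q$ and over $\C$. The final ``in particular'' clause then follows because the count is zero exactly when no component is loop-free bipartite.

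The main obstacle is not in this deduction but in the input \eqref{eq:khovanov-Q-identification}. If I had to verify it here, I would compute $\widetilde\partial_{0,q}^\dagger\widetilde\partial_{0,q}$ entrywise in the orthonormal enhanced-state basis. The diagonal entries are sums of squared norms of images under each crossing map. The off-diagonal entries come from pairs of basis states with the same image under a multiplication crossing, giving positive edge weights. The comultiplication crossings send a basis state to a sum of distinct basis states and have no such cross terms, so they contribute only diagonal (loop) terms. The delicate point is checking that all off-diagonal entries are nonnegative despite the cube signs. In degree zero every edge leaves the same vertex $\mathbf 0$, so each crossing's map appears in $\widetilde\partial^\dagger\widetilde\partial$ only as $(\pm f_c)^\dagger(\pm f_c)$ and the signs square away.
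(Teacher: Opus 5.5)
Your proposal is correct and follows essentially the same route as the paper's proof: Hodge theory with the grading translation identifies $\ker\widetilde\Delta_{0,q}(D)$ with the rational Khovanov group, the identification $\widetilde\Delta_{0,q}(D)=Q(G_{0,q}(D))$ transfers the problem to the graph, and the standard kernel count for signless Laplacians finishes the argument. Your additional details are sound and fill in steps the paper only cites or asserts: the quadratic-form proof of the kernel count, the explicit base change from $\C$ to $\Q$, and the observation that cube signs square away in degree zero.
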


\begin{proof}
By Hodge theory and the grading translation~\eqref{eq:laplacian-shift},
\[
\dim_{\C}\ker\widetilde\Delta_{0,q}(D)
=
\dim_{\C}\mathrm{KH}^{-n_-(D),\,q+n_+(D)-2n_-(D)}(K;\C)
=
\dim_{\Q}\mathrm{KH}^{-n_-(D),\,q+n_+(D)-2n_-(D)}(K;\Q).
\]
By~\eqref{eq:khovanov-Q-identification}, the left-hand side equals $\dim_{\C}\ker Q(G_{0,q}(D))$, which is the number of loop-free bipartite connected components by~\eqref{eq:Q-kernel-bipartite}.
\end{proof}

\subsubsection{Harmonic representatives via two-colorings}

The proof of Theorem~\ref{thm:khovanov-bipartite} produces explicit harmonic representatives.

\begin{cor}\label{cor:harmonic-2-coloring}
For each loop-free bipartite connected component $C\subseteq G_{0,q}(D)$ with bipartition $V(C)=A\sqcup B$, define
\[
h_C
=
\sum_{v\in A}e_v-
\sum_{v\in B}e_v
\in\widetilde C^{0,q}(D).
\]
Then $h_C$ is harmonic, and the vectors $h_C$, indexed by the loop-free bipartite connected components of $G_{0,q}(D)$, form a basis of
\[
\ker\widetilde\Delta_{0,q}(D).
\]
\end{cor}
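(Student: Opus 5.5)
The plan is to reduce everything to the quadratic form of the signless Laplacian, using the identification~\eqref{eq:khovanov-Q-identification} $\widetilde\Delta_{0,q}(D)=Q(G_{0,q}(D))$. For a weighted graph $G$ with positive edge weights $w_{uv}$ and positive loop weights $w_{vv}$, set $Q(G)=D_G+A_G$. I will first check, in the conventions of Section~\ref{ssec:weighted-graph-bounds}, that the quadratic form expands as a sum of squares:
\[
\langle f,Q(G)f\rangle=\sum_{\{u,v\}\in E,\,u\neq v} w_{uv}\,\lvert f(u)+f(v)\rvert^2+\sum_{v} c_v\, w_{vv}\lvert f(v)\rvert^2 .
\]
Here $c_v>0$ is a constant that depends on how loops enter the degree matrix. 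Since $Q(G)$ is positive semidefinite, $f\in\ker Q(G)$ holds if and only if every term vanishes. That is exactly the condition that $f(u)=-f(v)$ across every non-loop edge and $f(v)=0$ at every vertex carrying a loop.

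Next, I will work one connected component $C$ at a time, since $Q(G)$ is block diagonal over components.
\begin{itemize}
\item Suppose $C$ is loop-free and bipartite with parts $A\sqcup B$. Then the vector $h_C$, which is $+1$ on $A$, $-1$ on $B$, and $0$ elsewhere, satisfies the edge condition, so $h_C\in\ker Q(G)=\ker\widetilde\Delta_{0,q}(D)$. That is, $h_C$ is harmonic.
\item Conversely, suppose $f\in\ker Q(G)$ and restrict $f$ to $C$. Propagating the relation $f(u)=-f(v)$ along paths from a base vertex $v_0$ shows that $f(v)=\pm f(v_0)$, with the sign equal to the parity of the path length. If $C$ contains an odd cycle, the two parities conflict and force $f(v_0)=-f(v_0)$, so $f|_C=0$. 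If $C$ contains a loop, the loop term forces $f=0$ at that vertex, and hence on all of $C$ by connectivity.
\item If $C$ is loop-free and bipartite, then $f|_C=f(v_0)\,h_C$ up to sign.
\end{itemize}
Together these show that $\ker Q(G)$ is spanned by the $h_C$, with $C$ ranging over the loop-free bipartite components.

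Linear independence of the $h_C$ is immediate, because they have disjoint supports and each is nonzero. The fact that the resulting count agrees with Theorem~\ref{thm:khovanov-bipartite} serves as a consistency check. The basis vectors $e_v$ are exactly the orthonormal enhanced-state basis of $\widetilde C^{0,q}(D)$ indexed by the vertices, so $h_C$ lies in $\widetilde C^{0,q}(D)$ as stated.

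The only real obstacle is the expansion of the quadratic form. I need to confirm, from the precise weights produced in Section~\ref{ssec:zero-degree-graph}, that the diagonal of $\widetilde\Delta_{0,q}$ splits exactly into edge contributions plus nonnegative loop contributions. In other words, the degree matrix must have $D_{vv}=\sum_{u\neq v}w_{uv}+(\text{positive loop term})$, so that no stray diagonal term remains. If a vertex had extra diagonal weight not accounted for by a loop, it would behave like a loop anyway: it kills $f(v)$, and the argument still goes through once that weight is treated as a loop. Apart from this check, the argument is the standard one recorded in~\eqref{eq:Q-kernel-bipartite}.
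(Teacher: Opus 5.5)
Your proposal is correct and follows the same route as the paper, which deduces the corollary from the identification $\widetilde\Delta_{0,q}(D)=Q(G_{0,q}(D))$ together with the standard description \eqref{eq:Q-kernel-bipartite} of $\ker Q(G)$; you simply write out the sum-of-squares argument behind that standard fact. The diagonal check you flag goes through with no stray terms: with the paper's convention that a loop of weight $w$ contributes $w$ to both $A_G$ and $D_G$, your constant is $c_v=2$.
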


\subsubsection{Duality and maximal cube degree}

The all-zero and all-one resolutions of $D$ are exchanged by reversing the orientation of every edge of the cube of resolutions. Under this duality, multiplications and comultiplications are interchanged via Frobenius duality on $V$. The only additional bookkeeping comes from the cube signs. In maximal cube degree, these signs produce a signed version of the graph matrix, but they are removed by a diagonal change of basis: if an off-diagonal entry arises with signs $\epsilon_u$ and $\epsilon_v$ at its endpoints, then its sign is $\epsilon_u\epsilon_v$.

\begin{proposition}\label{prop:max-degree}
Let $D$ have $N$ crossings. For every unnormalized $q$-degree $q$, there is a finite weighted graph $G_{N,q}(D)$ obtained from the all-one resolution and a diagonal sign matrix $S$ with entries $\pm1$ such that
\[
S\widetilde\Delta_{N,q}(D)S^{-1}=Q\bigl(G_{N,q}(D)\bigr).
\]
Consequently,
\[
\dim_{\Q}\mathrm{KH}^{N-n_-(D),\,q+n_+(D)-2n_-(D)}(K;\Q)
=
\#\bigl\{\text{loop-free bipartite connected components of }G_{N,q}(D)\bigr\},
\]
and the harmonic-representative description of Corollary~\ref{cor:harmonic-2-coloring} holds in maximal cube degree after conjugating back by $S$.
\end{proposition}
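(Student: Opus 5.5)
In maximal cube degree $N$ there is no outgoing differential, so $\widetilde\Delta_{N,q}(D)=\widetilde\partial_{N-1}\widetilde\partial_{N-1}^{\dagger}$. I will compute its matrix entries in the standard enhanced-state basis $\{e_u\}$ of $V^{\otimes\ell(\mathbf 1)}$ in degree $q$, where $\ell(\mathbf 1)$ is the number of circles in the all-one resolution. The plan has four steps.

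\textbf{Step 1: local maps into the top vertex.} The only states adjacent to $\mathbf 1$ are the $N$ states $\mathbf 1-e_c$. For each crossing $c$, write $\widetilde\partial_{N-1}|_{c}=\epsilon_c\, f_c$, where $\epsilon_c=\pm1$ is the Bar-Natan cube sign of the edge $\mathbf 1-e_c\to\mathbf 1$, and $f_c$ is $m$ or $\Delta_V$ tensored with identities. Then
\[
\widetilde\Delta_{N,q}=\sum_c f_cf_c^{\dagger},
\]
because the sign enters squared. So the cube signs disappear entirely at top degree. The signs needing a diagonal $S$ come instead from the off-diagonal entries, as the next step shows.

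\textbf{Step 2: adjoints in the orthonormal basis.} Compute $f_cf_c^\dagger$ on one or two tensor factors.
\begin{itemize}
\item For a merge $f_c=m$: we have $m^\dagger(1)=1\otimes1$ and $m^\dagger(x)=x\otimes1+1\otimes x$. Hence $mm^\dagger$ is diagonal with entries $1$ and $2$. These diagonal terms play the role of loops.
\item For a split $f_c=\Delta_V$: the map $\Delta_V\Delta_V^\dagger$ on $V\otimes V$ acts on the span of $1\otimes x$ and $x\otimes 1$ by the all-ones $2\times2$ matrix, and sends $x\otimes x\mapsto x\otimes x$.
\end{itemize}
So splits contribute diagonal degree terms together with positive off-diagonal entries $+1$ between basis vectors differing by a swap $1\leftrightarrow x$ on the two circles created by $c$. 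This is dual to the cube-degree-zero picture of Section~\ref{ssec:zero-degree-graph}, with multiplications and comultiplications exchanged by Frobenius duality.

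\textbf{Step 3: reading off the graph.} With the conventions for weighted loops used to build $G_{0,q}(D)$, I define $G_{N,q}(D)$ as follows.
\begin{itemize}
\item Vertices are the basis vectors of $V^{\otimes\ell(\mathbf 1)}$ of degree $q$.
\item An edge of weight $1/2$ (or whatever normalization matches $D_G+A_G$) joins $u$ and $v$ for each split crossing exchanging them.
\item Loops record the diagonal residue of merge crossings.
\end{itemize}
I then check entrywise that the diagonal of $\sum_c f_cf_c^\dagger$ equals the weighted degree and the off-diagonal equals the adjacency.

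If, under the paper's precise convention, off-diagonal entries acquire signs, I handle them as follows. These signs would come from identifying tensor factors or from the dual convention. I would show each such sign has the form $\epsilon_u\epsilon_v$ with $\epsilon_u$ a function of the vertex alone, for example a parity of $x$-labels on a fixed subset of circles. Then $S=\mathrm{diag}(\epsilon_u)$ conjugates the matrix to $Q(G_{N,q}(D))$.

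\textbf{Step 4: consequences.} Conjugation by $S$ preserves kernel dimension. So Hodge theory (Theorem~\ref{thm_Hodge}), the grading shift~\eqref{eq:laplacian-shift} and~\eqref{eq:Q-kernel-bipartite} give the dimension formula, exactly as in the proof of Theorem~\ref{thm:khovanov-bipartite}. The harmonic vectors are $S^{-1}h_C$, with $h_C$ as in Corollary~\ref{cor:harmonic-2-coloring}.

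\textbf{Main obstacle.} The hard part is the sign bookkeeping in Step 3: showing that any off-diagonal sign factors as $\epsilon_u\epsilon_v$, i.e.\ that the signed graph is balanced. I would first try to show the off-diagonal entries are all already $+1$, so $S=I$ works. Failing that, I would exhibit an explicit potential $\epsilon_u$ and verify it edge by edge.
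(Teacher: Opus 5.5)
Your proposal is correct, and it handles the cube signs differently from the paper. The paper's justification is cube duality. Reversing all edges exchanges the all-zero and all-one resolutions and, via Frobenius duality, exchanges merges and splits, so the degree-zero construction transfers with the roles of $1$ and $x$ swapped; this is the content of the later remark computing $\Delta_V\Delta_V^{\dagger}$ and $mm^{\dagger}$. The paper then asserts, without checking, that the cube signs produce a signed matrix whose off-diagonal signs factor as $\epsilon_u\epsilon_v$ and are removed by a diagonal $S$.

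Your Step~1 makes that sign analysis unnecessary. The sources $V^{\otimes\ell(\mathbf 1-e_c)}$ of distinct incoming edges are mutually orthogonal summands, so $\widetilde\partial_{N-1}\widetilde\partial_{N-1}^{\dagger}=\sum_c\epsilon_c^2f_cf_c^{\dagger}=\sum_c f_cf_c^{\dagger}$. Step~2 then shows every entry of this sum is nonnegative in the enhanced-state basis. So you have already proved that $S=I$ works, and the hedge in Step~3 and the ``main obstacle'' paragraph can be dropped. The duality viewpoint gives a conceptual description of $G_{N,q}(D)$ as the degree-zero graph of the all-one resolution with $1\leftrightarrow x$ and merges $\leftrightarrow$ splits. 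Your direct computation gives an actual entrywise verification, including of the sign claim.

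Two small corrections to the graph in Step~3:
\begin{itemize}
\item An ordinary edge should have weight $1$ per split crossing, not $1/2$. The off-diagonal entry of $\Delta_V\Delta_V^{\dagger}$ is $1$, and the matching diagonal $1$ enters through $D_G$.
\item Splits also produce loops. Each split crossing whose two new circles both carry $x$ gives a weight-$\tfrac12$ loop at $u$, coming from $\Delta_V\Delta_V^{\dagger}(x\otimes x)=x\otimes x$; these are dual to the merge loops at $11$ in degree zero.
\end{itemize}
Merges give a loop of weight $\tfrac12$ or $1$ according as the merged circle carries $1$ or $x$. With these weights the matrix is diagonally dominant, so the entrywise check $D_G+A_G=\sum_cf_cf_c^{\dagger}$ goes through, and Step~4 is fine as written.
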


\subsubsection{A fixed-width gap bound when rational cohomology vanishes}

Theorem~\ref{thm:khovanov-bipartite} converts the absence of rational Khovanov cohomology into a graph-theoretic statement that controls the spectral gap.

\begin{proposition}\label{prop:gap-when-no-Q-cohomology}
Suppose $\ker\widetilde\Delta_{0,q}(D)=0$. Write $|V(G_{0,q}(D))|$ for the number of vertices of $G_{0,q}(D)$, and let $d^*$ be its maximum weighted vertex degree. Then the bipartiteness-defect parameter $\Psi$ satisfies
\[
\Psi\geq\frac{1}{2|V(G_{0,q}(D))|},
\]
and
\begin{equation}\label{eq:polynomial-gap-bound}
\operatorname{gap}\bigl(\widetilde\Delta_{0,q}(D)\bigr)
\geq
\frac{1}{16d^*|V(G_{0,q}(D))|^2}.
\end{equation}
If the all-zero resolution has $\ell$ circles and $q=k-n$ with $n+k=\ell$, then
\[
|V(G_{0,q}(D))|=\binom{\ell}{k}.
\]
In particular, for a family of $N$-crossing diagrams with $\ell=O(N)$, Equation~\eqref{eq:polynomial-gap-bound} is inverse polynomial in every fixed-width band near either extremal $q$-degree.
\end{proposition}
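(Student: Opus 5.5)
The proof splits into three independent parts: a lower bound on the bipartiteness-defect parameter $\Psi$, the Desai--Rao bound turning $\Psi$ into a gap estimate, and the vertex count. Throughout we use the identification $\widetilde\Delta_{0,q}(D)=Q(G_{0,q}(D))$ from \eqref{eq:khovanov-Q-identification}.

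\textbf{Step 1: the Desai--Rao inequality.} We take $\Psi$ in the form recalled in Section~\ref{ssec:weighted-graph-bounds}. This is the minimum, over nonempty vertex subsets $S$ and partitions $S=S_1\sqcup S_2$, of
\[
\frac{w(S_1)+w(S_2)+w(S,\overline S)}{\operatorname{vol}(S)}.
\]
Here $w(S_i)$ is the edge and loop weight internal to $S_i$, and $w(S,\overline S)$ is the weight leaving $S$. The Desai--Rao theorem gives
\[
\lambda_{\min}(Q(G))\geq \frac{\Psi^2}{4d^*}.
\]
Since $\ker\widetilde\Delta_{0,q}(D)=0$, the smallest eigenvalue of $Q$ is positive and equals the gap. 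Hence \eqref{eq:polynomial-gap-bound} follows from the bound $\Psi\geq 1/(2|V|)$ by direct substitution: $\Psi^2/(4d^*)\ge 1/(16d^*|V|^2)$.

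\textbf{Step 2: the bound on $\Psi$.} This is the main step. Fix $S$ and a partition $S_1\sqcup S_2$, and consider two cases.

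In the first case, $S$ is not a union of connected components. Then some edge leaves $S$, so $w(S,\overline S)\geq 1/2$, because every positive weight is at least $1/2$.

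In the second case, $S$ contains a full component $C$. By Theorem~\ref{thm:khovanov-bipartite} and the vanishing of the kernel, $C$ contains a loop or an odd cycle. Any two-coloring of $C$ induced by $S_1,S_2$ must therefore place a loop, or some edge of the odd cycle, inside one class. This contributes numerator weight at least $1/2$.

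In both cases the numerator is at least $1/2$. For the denominator, we need $\operatorname{vol}(S)\le |V|$ in suitable units. Here I would use the normalization from the Desai--Rao setup, in which the volume is measured with degrees scaled by $d^*$, or $\Psi$ is defined with $|S|$ in the denominator. With $|S|$ in the denominator, $|S|\le|V|$ gives $\Psi\ge 1/(2|V|)$ immediately.

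The obstacle is matching the exact normalization of $\Psi$ in Section~\ref{ssec:weighted-graph-bounds}. If volume is degree-weighted, the $d^*$ factor must be absorbed by the Desai--Rao constant. I would check that the version quoted there pairs $\Psi$ (normalized by $|S|$) with the $4d^*$ denominator.

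\textbf{Step 3: the vertex count.} In cube degree zero there is no shift, so the $q$-degree of a basis tensor on the $\ell$ circles is (number of $1$'s) $-$ (number of $x$'s). Writing $q=k-n$ with $n+k=\ell$ forces exactly $k$ labels equal to $1$. The vertices are then the choices of which $k$ circles carry $1$, giving $\binom{\ell}{k}$ vertices.

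\textbf{Step 4: the asymptotic claim.} Near an extremal degree, $\min(k,n)=r$ is bounded. Then $|V|\le \ell^r=O(N^r)$. Each vertex's weighted degree is bounded by a constant times $N$, since each crossing contributes $O(1)$ weight, so $d^*=O(N)$. The gap is therefore $\Omega(N^{-2r-1})$, which is inverse polynomial as claimed.
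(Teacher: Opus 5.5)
Your argument is correct and is essentially the paper's proof. The paper splits on whether $G[S]$ is bipartite rather than on whether $S$ is a union of components, but the two case analyses are equivalent, and the vertex count and $d^*\le N$ observations match. The normalization you flag is resolved by \eqref{eq:defPsi}: it puts $|S|$, not a degree-weighted volume, in the denominator of $\Psi_S$, and its $e_{\min}(S)$ is exactly your minimum over partitions $S=S_1\sqcup S_2$ of $w(S_1)+w(S_2)$, so $\Psi_S\ge 1/(2|S|)\ge 1/(2|V|)$ as you say.
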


\begin{proof}
By Theorem~\ref{thm:khovanov-bipartite}, every connected component of $G_{0,q}(D)$ contains a loop or an odd cycle. Fix a nonempty subset $S\subseteq V(G_{0,q}(D))$. If the induced subgraph $G[S]$ is non-bipartite, then $e_{\min}(S)\geq 1/2$, since the smallest positive loop or edge weight in the Khovanov graphs is $1/2$. If $G[S]$ is bipartite and $\operatorname{cut}(S)=0$, then $S$ is a union of loop-free bipartite connected components, contradicting the hypothesis. Hence $\operatorname{cut}(S)\geq1$ in the second case. In either case,
\[
\Psi_S\geq\frac{1}{2|S|}\geq\frac{1}{2|V(G_{0,q}(D))|}.
\]
The lower Desai--Rao bound gives~\eqref{eq:polynomial-gap-bound}. The vertex count follows directly from choosing which $k$ of the $\ell$ tensor factors are labeled by $1$. Finally, $d^*\leq N$, since each crossing contributes at most one to the weighted degree of a vertex.
\end{proof}

\begin{remark}\label{rmk:fixed-width-only}
The fixed-width hypothesis is essential for this argument. If $k$ is fixed, then $\binom{\ell}{k}=O(\ell^k)$, and the same holds near the opposite extremal degree with $n$ fixed. In a middle $q$-degree, however, $k$ can be of order $\ell/2$, and
\[
\binom{\ell}{\lfloor\ell/2\rfloor}
\sim
\sqrt{\frac{2}{\pi\ell}}\,2^\ell.
\]
Thus the Desai--Rao estimate alone gives no inverse-polynomial lower bound in the bulk of the $q$-grading. Extending the result there would require additional structural control of the bipartiteness parameter $\Psi$, not merely a sharper estimate of the total vertex count.
\end{remark}

\subsubsection{Open question: graph-level behavior under local diagram changes}\label{sec:graph functoriality of Reidemeister moves}

It is natural to ask how local modifications of a diagram are reflected in the extreme-degree graphs $G_{0,q}(D)$ and $G_{N,q}(D)$. In favorable cases these changes may correspond to explicit graph operations such as vertex insertions, edge insertions, contractions, or changes in loop weights. Identifying these operations, and understanding how the signless-Laplacian spectrum behaves under them, would give a diagrammatic explanation for changes in the Khovanov spectral gap. We do not expect such a description, by itself, to imply monotonicity of spectral gaps under Reidemeister moves. We leave this as an open problem.

% - - - - - - - - - - - - - - - - - -
\subsection{Bounds on minimal eigenvalues of weighted graphs}\label{ssec:weighted-graph-bounds}
% - - - - - - - - - - - - - - - - - -

We recall the bipartiteness-defect bound of Desai and Rao~\cite{Desai} for the smallest eigenvalue of a signless graph Laplacian, in the form used below.

Let $G=(V,E)$ be a finite weighted graph with positive edge weights and possible self-loops. Write $A_G$ for its weighted adjacency matrix and $D_G$ for its diagonal matrix of weighted degrees. A loop of weight $w$ contributes $w$ to both $(A_G)_{vv}$ and $(D_G)_{vv}$. Thus
\[
Q(G)=D_G+A_G.
\]
For a vertex subset $S\subseteq V$, write $G[S]$ for the induced subgraph, let $e_{\min}(S)$ be the minimum total weight of edges and loops that must be removed from $G[S]$ to make it bipartite, and let $\operatorname{cut}(S)$ be the total weight of edges crossing from $S$ to $V\setminus S$. Set
\begin{equation}\label{eq:defPsi}
\Psi(G):=\min_{\varnothing\neq S\subseteq V}\Psi_S,
\qquad
\Psi_S:=\frac{e_{\min}(S)+\operatorname{cut}(S)}{|S|}.
\end{equation}
A graph has $\Psi(G)=0$ if and only if it has a loop-free bipartite connected component. We write $d^*(G)$ for the largest weighted vertex degree.

The following results were proved in~\cite{Desai} for unweighted graphs. The same Rayleigh-quotient argument gives the weighted form used here.

\begin{thm}[Desai--Rao]\label{thm:Desai}
The matrix $Q(G)$ is positive semidefinite, and
\[
\ker Q(G)\neq0
\quad\Longleftrightarrow\quad
G\text{ has a loop-free bipartite connected component}.
\]
Moreover,
\begin{equation}\label{eq:desai-rao-bound}
\frac{\Psi(G)^2}{4d^*(G)}
\leq
\lambda_{\min}(Q(G))
\leq
4\Psi(G).
\end{equation}
\end{thm}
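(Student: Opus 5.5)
Positive semidefiniteness and the kernel characterization follow from the edge-sum form of the quadratic form. For $f\colon V\to\C$ (real suffices since $Q(G)$ is real symmetric),
\[
\langle f,Q(G)f\rangle=\sum_{\{u,v\}\in E,\,u\neq v} w_{uv}\,|f(u)+f(v)|^2+\sum_{\text{loops at }v}w_{vv}\,|2f(v)|^2\cdot\tfrac14\cdot c,
\]
where $c$ is the normalization making a loop of weight $w$ contribute $w$ to both $D_G$ and $A_G$; this gives $2w|f(v)|^2$ for a loop. Every term is nonnegative, so $Q(G)\succeq0$. The form vanishes iff $f(u)=-f(v)$ across every edge and $f(v)=0$ at every looped vertex. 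On a connected component this forces $f\equiv0$ if the component has an odd cycle or a loop. Otherwise $f$ is a multiple of the $\pm1$ two-coloring. This proves the equivalence and \eqref{eq:Q-kernel-bipartite}.

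For the upper bound $\lambda_{\min}\le4\Psi$, fix $S$ attaining $\Psi$. Choose a minimum-weight set $F$ of edges and loops in $G[S]$ whose removal leaves $G[S]\setminus F$ bipartite with bipartition $S=A\sqcup B$. Take the test vector $f=\mathbf 1_A-\mathbf 1_B$, supported on $S$. Edges inside $G[S]$ not in $F$ join $A$ to $B$ and contribute $0$. Edges in $F$ contribute at most $4w$ each, loops included. Cut edges contribute $w\cdot1$. Hence $\langle f,Qf\rangle\le4(e_{\min}(S)+\operatorname{cut}(S))$, and since $\|f\|^2=|S|$ the Rayleigh quotient gives $\lambda_{\min}\le4\Psi_S=4\Psi$.

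The lower bound is the main obstacle; it is a Cheeger-type ("dual Cheeger") argument. Let $f$ be a unit eigenvector for $\lambda=\lambda_{\min}$. First, by Cauchy--Schwarz,
\[
\sum_{uv}w_{uv}\bigl|f(u)^2-f(v)^2\bigr|\;\le\;\Bigl(\sum w_{uv}(f(u)+f(v))^2\Bigr)^{1/2}\Bigl(\sum w_{uv}(f(u)-f(v))^2\Bigr)^{1/2}\le\sqrt{\lambda}\,\sqrt{2\cdot 2d^*},
\]
and the analogous estimate for $\sum w_{uv}\,|\,|f(u)|-\ldots|$ must be arranged to include the loop terms. Second, use threshold sets. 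For $t>0$ let $S_t=\{v:f(v)^2\ge t\}$, split as $A_t=\{f\ge\sqrt t\}$ and $B_t=\{f\le-\sqrt t\}$. Removing the edges inside $A_t$, inside $B_t$, and all loops in $S_t$ makes $G[S_t]$ bipartite. Then
\[
\int_0^\infty\bigl(e_{\min}(S_t)+\operatorname{cut}(S_t)\bigr)\,dt
\]
is bounded by a sum over edges of quantities like $\min(f(u)^2,f(v)^2)$ for same-sign edges, $|f(u)^2-f(v)^2|$ for cut and opposite-sign pairs, and $f(v)^2$ for loops. Each is dominated by $|f(u)+f(v)|\cdot(|f(u)|+|f(v)|)$, or by $2|f(v)|\cdot|f(v)|$ for loops. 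Cauchy--Schwarz bounds the total by $\sqrt{\lambda}\cdot 2\sqrt{d^*}$. Since $\int_0^\infty|S_t|\,dt=\|f\|^2=1$, some $t$ satisfies
\[
\Psi_{S_t}\le 2\sqrt{\lambda d^*}.
\]
This yields $\Psi^2\le4d^*\lambda$. The step needing care is the edge-by-edge domination, especially same-sign edges inside $A_t$ and loops, and tracking constants so the final factor is exactly $4d^*$. I would follow Desai--Rao's unweighted computation line by line, inserting $w_{uv}$ as a multiplicity; every inequality is linear in edge multiplicities, so the weighted form goes through.
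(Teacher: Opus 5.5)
Your proposal is correct and is essentially the argument the paper invokes: the paper gives no proof of its own beyond citing Desai--Rao and asserting that weights go through, and your edge-sum form of $\langle f,Q(G)f\rangle$, the test vector $\mathbf 1_A-\mathbf 1_B$ on an optimal $S$, and the level-set/Cauchy--Schwarz argument on $f^2$ are that argument, with weights treated as multiplicities. On the bookkeeping you flag, a same-sign edge contributes $w\min(f(u)^2,f(v)^2)$ as a removed edge and $w\lvert f(u)^2-f(v)^2\rvert$ as a cut edge, and these sum to $w\max(f(u)^2,f(v)^2)\le w(\lvert f(u)\rvert+\lvert f(v)\rvert)^2$; so bound each edge's total once rather than each piece separately, and split each loop term as $\sqrt{2w}\lvert f(v)\rvert\cdot\sqrt{w/2}\lvert f(v)\rvert$, which gives $\Psi^2\le 2d^*\lambda_{\min}$, comfortably inside the stated $4d^*$.
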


\subsection{Graphs from Khovanov Laplacians in cube degree zero}\label{ssec:zero-degree-graph}
% - - - - - - - - - - - - - - - - - -

% - - - - - - - - - - - - - - - - - -
\subsubsection{The structure of the Khovanov complex in cube degree zero} \label{subsubsec:homzero-structure}
% - - - - - - - - - - - - - - - - - -
The signless graph model is cleanest in cube degree zero because there is no incoming differential. In the all-zero resolution, the cube signs on outgoing edges are all $+1$, and the local summands $m^\dagger m$ and $\Delta_V^\dagger\Delta_V$ have nonnegative matrix entries in the standard basis. Thus the degree-zero Laplacian is represented by a genuine signless graph Laplacian with positive weights. In intermediate cube degrees, incoming and outgoing cube signs interact and one generally obtains signed matrices rather than ordinary signless graph Laplacians. The maximal-cube-degree case is recovered separately by duality, with the signs removed by diagonal conjugation.

To describe the graph $G_{n,k}(D)$ of Section~\ref{ssec:zero-degree-graph} below, we first fix a convenient indexing of the tensor factors of $\widetilde C^0(D)$. Throughout, $\ket{r} = \ket{0 \cdots 0}$ denotes the all-zero resolution of a diagram $D$ with $N$ crossings, and $\ell = \ell(r)$ the number of resolution circles. By Section~\ref{ssec:khovanov-complex}, $\widetilde C^0(D) \cong V^{\otimes \ell}$, with one copy of $V=\C\langle1,x\rangle$ for each circle.

The tensor factors are indexed by referring to the underlying planar diagram. Treat each crossing as a $4$-valent vertex; the resulting plane graph has $2N$ edges connecting its $N$ vertices. Choose a basepoint on $K$ and label the edges $1, 2, \ldots, 2N$ in the order encountered along a complete traversal of $K$. After resolving the diagram at every crossing, each resulting circle inherits two or more of these edge labels, and we identify the corresponding tensor factor of $V$ by the smallest such label. With this convention,
\[
V^{\otimes \ell} \;=\; V_{i_1} \otimes V_{i_2} \otimes \cdots \otimes V_{i_\ell}
\]
where $1 \le i_1 < i_2 < \cdots < i_\ell \le 2N$ are the chosen indices.

\begin{example} \label{example:trefoil-label}
Consider the standard diagram $D_{6_3}$ of the knot $6_3$ with a labeling $\und{1}, \dots, \und{6}$ of its crossings and a labeling from $1$ to $12$ of its edges.  Below we illustrate the process of labeling all edge components and how this labeling gives rise to an indexing of the tensor powers appearing in the all-zero resolution of this knot.
\[
\xy
(0,0)*+{\includegraphics[width=1.8in]{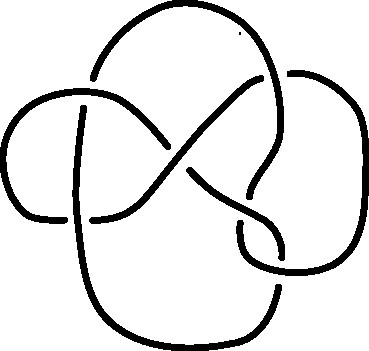}}:
(0,0)*+{
    \begin{tikzpicture}[scale=1]
    %
     % %   
        % Orange numbers
        \node[black] at (1.05, 2.9) {$\scriptstyle 1$};
        \node[black] at (.1, 1.16) {$\scriptstyle 2$};
        \node[black] at (-1.9, 2) {$\scriptstyle 3$};
        \node[black] at (.1, 2.9) {$\scriptstyle 4$};
        \node[black] at (.8, 1.35) {$\scriptstyle 5$};
         \node[black] at (2, 1.02) {$\scriptstyle 6$};
        \node[black] at (.63, -0.7) {$\scriptstyle 7$};
        \node[black] at (-.9, 2) {$\scriptstyle 8$};
        \node[black] at (.63, 4.2) {$\scriptstyle 9$};
        \node[black] at (2.1, 2.2) {$\scriptstyle 10$};
         \node[black] at (1.1, .8) {$\scriptstyle 11$};
        \node[black] at (3.1, 2) {$\scriptstyle 12$};
      %  
        % Red numbers with underlines
        \node[red] at (.61, 2.45) {\underline{1}};
        \node[red] at (-.7, 3.2) {\underline{2}};
        \node[red] at (2.1, 3.3) {\underline{3}};
        \node[red] at (1.3, 1.85) {\underline{4}};
        \node[red] at (2.15, .15) {\underline{5}}; 
        \node[red] at (-.9, 0.7) {\underline{6}};
    \end{tikzpicture}
    };
\endxy  
\qquad 
\quad 
    \xy
    (0,0)*+{\includegraphics[width=1.8in]{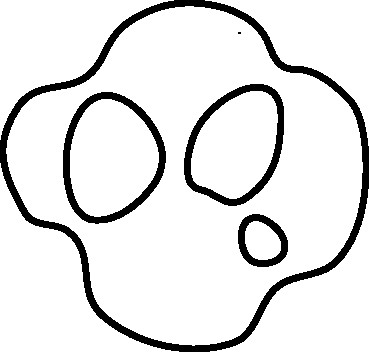}}:
    (0,0)*+{
        \begin{tikzpicture}[scale=1]
        %
         % %   
            % Orange numbers
            \node[black] at (1.05, 2.9) {$\scriptstyle 1$};
            \node[black] at (.1, 1.16) {$\scriptstyle 2$};
            \node[black] at (-1.9, 2) {$\scriptstyle 3$};
            \node[black] at (.1, 2.9) {$\scriptstyle 4$};
            \node[black] at (.8, 1.35) {$\scriptstyle 5$};
             \node[black] at (2, 1.02) {$\scriptstyle 6$};
            \node[black] at (.63, -0.6) {$\scriptstyle 7$};
            \node[black] at (-.9, 2) {$\scriptstyle 8$};
            \node[black] at (.63, 4.1) {$\scriptstyle 9$};
            \node[black] at (2.1, 2.2) {$\scriptstyle 10$};
             \node[black] at (1.1, .8) {$\scriptstyle 11$};
            \node[black] at (3.1, 2) {$\scriptstyle 12$};
        \end{tikzpicture}
        };
    \endxy  
\]
On the right, each loop is labeled by multiple edges from the original knot.  The tensor factors carry the minimal label appearing on each loop. In this case, $V_1 \otimes V_2 \otimes V_3 \otimes V_6$.
\end{example}

The cochain space $V^{\otimes \ell}$ decomposes by $q$-degree into components $V^{\otimes \ell}_q$, where the homogeneous basis vectors are tensor products $v = v_{i_1} \otimes \cdots \otimes v_{i_\ell}$ with $v_i \in \{1, x\}$, and the unnormalized $q$-degree is $q = k - n$, with $k$ the number of factors equal to $1$ and $n$ the number equal to $x$.

For a diagram $D$ with $N$ crossings, the cube-degree-zero piece $\widetilde\partial_0$ of the differential decomposes as a sum of $N$ pieces $d_{rr'}$, one for each resolution $\ket{r'}$ obtained by flipping a single $0$ to a $1$ in $\ket{r}$ (see Section~\ref{ssec:khovanov-complex}). Each $d_{rr'}$ is one of two local moves on the resolution circles: a multiplication $m_{ij} \colon V^{\otimes \ell} \to V^{\otimes (\ell - 1)}$ (when two circles fuse into one) or a comultiplication $s_{i,j} \colon V^{\otimes \ell} \to V^{\otimes (\ell + 1)}$ (when one circle splits into two), acting as the identity on all unindexed tensor factors. Their actions on the relevant factors are
\begin{alignat*}{3}
 m_{ij} \maps V_i\otimes V_j&\to V_{\min(i,j)}   \qquad \qquad    && s_{i,j} \maps V_{\min(i,j)} \to V_i \otimes V_j\\
 \ket{11} &\mapsto \ket{1} \qquad \qquad    && \quad\; \ket{1} \mapsto \ket{1x} + \ket{x1} \\
 \ket{1x} &\mapsto \ket{x} \qquad \qquad    && \quad\; \ket{x} \mapsto \ket{xx}\\
\ket{x1}  &\mapsto \ket{x} \\
\ket{xx}  &\mapsto 0 
\end{alignat*}
where $i$ and $j$ are indices of two tensor factors in $V^{\otimes \ell}$ and $\min(i,j)$ is a label of a single tensor factor in $V^{\otimes \ell}$.  In $s_{i,j}$, the newly created circle is labeled by $\max(i,j)$. 

The Laplacian $\widetilde\Delta_{0}(D)\maps V^{\otimes \ell} \to V^{\otimes \ell}$ in cube degree zero is then a sum of $N$ local operators of the form 
 \begin{alignat}{3} \label{eq:Delta0-bar}
 \mathfrak{m}_{i,j}:=m^{\dagger}_{ij}m_{ij} \maps V_i \otimes V_j&\to V_i \otimes V_j   \qquad \qquad    
        && \mathfrak{s}_{i} := s^{\dagger}_{i,j}s_{i,j} \maps V_i \to V_i  \nonumber \\
 \ket{11} &\mapsto \ket{11} \qquad \qquad    
    && \quad\; \ket{1} \mapsto 2\ket{1 }   \nonumber\\
 \ket{1x} &\mapsto \ket{1x} + \ket{x1} \qquad \qquad    && \quad\; \ket{x} \mapsto \ket{x}\\
\ket{x1}  &\mapsto \ket{1x} + \ket{x1}   \nonumber\\
\ket{xx}  &\mapsto 0  \nonumber
\end{alignat}
 that act by the identity on the non-indexed tensor factors.  In particular,
\begin{equation} \label{eq:0Lsplit}
  \widetilde\Delta_0(D) = \sum_{a=1}^{p} \mathfrak{m}_{i_a, j_a} + \sum_{b=1}^t \mathfrak{s}_{i_b} 
\end{equation}
for some   $1 \leq i_a < j_a \leq 2N$, $1 \leq i_b\leq 2N$, where $N=p+t$, $p$ is the number of merge terms, and $t$ is the number of split terms in $\widetilde\Delta_0(D)$.

\begin{example}\label{example:trefoil-zero}
For the standard diagram $D_{6_3}$, the all-zero resolution is a tensor product of $\ell=4$ copies of $V$ carrying labels $V_1 \otimes V_2 \otimes V_3 \otimes V_6$.  The Laplacian $\widetilde\Delta_0(D_{6_3})$ takes the form
\[ 
\widetilde\Delta_0(D_{6_3}) = \mathfrak{m}_{12}+2\mathfrak{m}_{23} + \mathfrak{m}_{13} + \mathfrak{m}_{16} + \mathfrak{m}_{36}
\]
Figure~\ref{fig:6-3-Hom=-zero} illustrates each term of the differential and the corresponding resolutions. 
\end{example}

\begin{figure}
    \centering
$
\xy
%% ALL 0 resolution
(-30,0)*+{
    \xy
    (0,0)*+{\includegraphics[width=1.35in]{6_3-000000.jpg}}:
    (0,0)*+{
        \begin{tikzpicture}[scale=.8]
        %
         % %   
            % Orange numbers
            \node[black] at (1.05, 2.9) {$\scriptstyle 1$};
            \node[black] at (.1, 1.16) {$\scriptstyle 2$};
            \node[black] at (-1.9, 2) {$\scriptstyle 3$};
            \node[black] at (.1, 2.9) {$\scriptstyle 4$};
            \node[black] at (.8, 1.35) {$\scriptstyle 5$};
             \node[black] at (2, 1.02) {$\scriptstyle 6$};
            \node[black] at (.63, -0.6) {$\scriptstyle 7$};
            \node[black] at (-.9, 2) {$\scriptstyle 8$};
            \node[black] at (.63, 4.1) {$\scriptstyle 9$};
            \node[black] at (2.1, 2.2) {$\scriptstyle 10$};
             \node[black] at (1.1, .8) {$\scriptstyle 11$};
            \node[black] at (3.1, 2) {$\scriptstyle 12$};
        \end{tikzpicture}
        };
    \endxy  
    }="0";
%
% 100000
(18,65)*{
    \xy
    (0,0)*+{\includegraphics[width=1in]{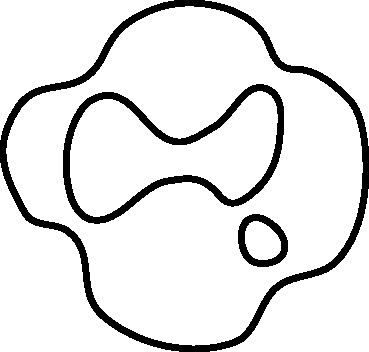}}:
    (0,0)*+{
    \begin{tikzpicture}[scale=.6]
    %
     % %   
        % Orange numbers
        \node[black] at (1.05, 2.9) {$\scriptscriptstyle 1$};
        \node[black] at (.1, 1.16) {$\scriptscriptstyle 2$};
        \node[black] at (-1.9, 2) {$\scriptscriptstyle 3$};
        \node[black] at (.1, 2.9) {$\scriptscriptstyle 4$};
        \node[black] at (.8, 1.35) {$\scriptscriptstyle 5$};
         \node[black] at (2, 1.02) {$\scriptscriptstyle 6$};
        \node[black] at (.63, -0.5) {$\scriptscriptstyle 7$};
        \node[black] at (-.9, 2) {$\scriptscriptstyle 8$};
        \node[black] at (.63, 4.0) {$\scriptscriptstyle 9$};
        \node[black] at (2.1, 2.2) {$\scriptscriptstyle 10$};
         \node[black] at (1.1, .8) {$\scriptscriptstyle 11$};
        \node[black] at (3.1, 2) {$\scriptscriptstyle 12$};
    \end{tikzpicture}
         };
    \endxy  
    }="t1";
%
% 010000
(32,40)*+{
    \xy
    (0,0)*+{\includegraphics[width=1in]{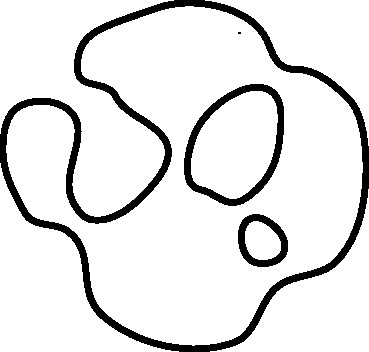}}:
    (0,0)*+{
    \begin{tikzpicture}[scale=.6]
    %
     % %   
        % Orange numbers
        \node[black] at (1.05, 2.9) {$\scriptscriptstyle 1$};
        \node[black] at (.1, 1.16) {$\scriptscriptstyle 2$};
        \node[black] at (-1.9, 2) {$\scriptscriptstyle 3$};
        \node[black] at (.1, 2.9) {$\scriptscriptstyle 4$};
        \node[black] at (.8, 1.35) {$\scriptscriptstyle 5$};
         \node[black] at (2, 1.02) {$\scriptscriptstyle 6$};
        \node[black] at (.63, -0.5) {$\scriptscriptstyle 7$};
        \node[black] at (-.9, 2) {$\scriptscriptstyle 8$};
        \node[black] at (.63, 4.0) {$\scriptscriptstyle 9$};
        \node[black] at (2.1, 2.2) {$\scriptscriptstyle 10$};
         \node[black] at (1.1, .8) {$\scriptscriptstyle 11$};
        \node[black] at (3.1, 2) {$\scriptscriptstyle 12$};
    \end{tikzpicture}
         };
    \endxy  
    }="t2";
%
% 001000
(45,15)*+{
    \xy
    (0,0)*+{\includegraphics[width=1in]{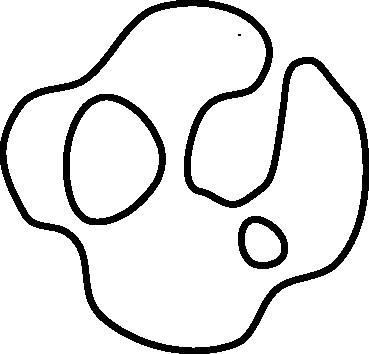}}:
    (0,0)*+{
    \begin{tikzpicture}[scale=.6]
    %
     % %   
        % Orange numbers
        \node[black] at (1.05, 2.9) {$\scriptscriptstyle 1$};
        \node[black] at (.1, 1.16) {$\scriptscriptstyle 2$};
        \node[black] at (-1.9, 2) {$\scriptscriptstyle 3$};
        \node[black] at (.1, 2.9) {$\scriptscriptstyle 4$};
        \node[black] at (.8, 1.35) {$\scriptscriptstyle 5$};
         \node[black] at (2, 1.02) {$\scriptscriptstyle 6$};
        \node[black] at (.63, -0.5) {$\scriptscriptstyle 7$};
        \node[black] at (-.9, 2) {$\scriptscriptstyle 8$};
        \node[black] at (.63, 4.0) {$\scriptscriptstyle 9$};
        \node[black] at (2.1, 2.2) {$\scriptscriptstyle 10$};
         \node[black] at (1.1, .8) {$\scriptscriptstyle 11$};
        \node[black] at (3.1, 2) {$\scriptscriptstyle 12$};
    \end{tikzpicture}
         };
    \endxy  
    }="t3";
%
% 000100
(45,-15)*+{
    \xy
    (0,0)*+{\includegraphics[width=1in]{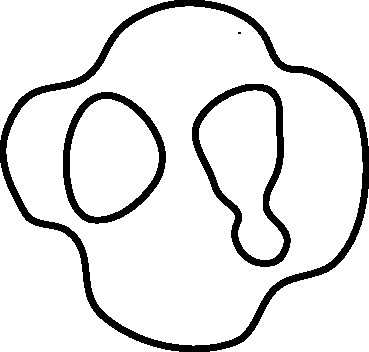}}:
    (0,0)*+{
    \begin{tikzpicture}[scale=.6]
    %
     % %   
        % Orange numbers
        \node[black] at (1.05, 2.9) {$\scriptscriptstyle 1$};
        \node[black] at (.1, 1.16) {$\scriptscriptstyle 2$};
        \node[black] at (-1.9, 2) {$\scriptscriptstyle 3$};
        \node[black] at (.1, 2.9) {$\scriptscriptstyle 4$};
        \node[black] at (.8, 1.35) {$\scriptscriptstyle 5$};
         \node[black] at (2, 1.02) {$\scriptscriptstyle 6$};
        \node[black] at (.63, -0.5) {$\scriptscriptstyle 7$};
        \node[black] at (-.9, 2) {$\scriptscriptstyle 8$};
        \node[black] at (.63, 4.0) {$\scriptscriptstyle 9$};
        \node[black] at (2.1, 2.2) {$\scriptscriptstyle 10$};
         \node[black] at (1.1, .8) {$\scriptscriptstyle 11$};
        \node[black] at (3.1, 2) {$\scriptscriptstyle 12$};
    \end{tikzpicture}
         };
    \endxy  
    }="t4";
%
% 000010
(32,-40)*+{
    \xy
    (0,0)*+{\includegraphics[width=1in]{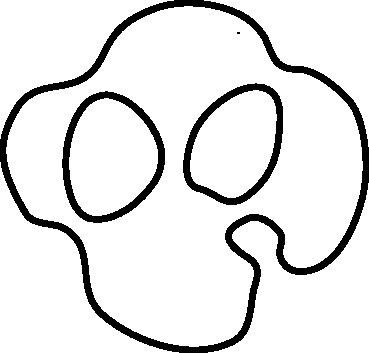}}:
    (0,0)*+{
    \begin{tikzpicture}[scale=.6]
    %
     % %   
        % Orange numbers
        \node[black] at (1.05, 2.9) {$\scriptscriptstyle 1$};
        \node[black] at (.1, 1.16) {$\scriptscriptstyle 2$};
        \node[black] at (-1.9, 2) {$\scriptscriptstyle 3$};
        \node[black] at (.1, 2.9) {$\scriptscriptstyle 4$};
        \node[black] at (.8, 1.35) {$\scriptscriptstyle 5$};
         \node[black] at (2, 1.02) {$\scriptscriptstyle 6$};
        \node[black] at (.63, -0.5) {$\scriptscriptstyle 7$};
        \node[black] at (-.9, 2) {$\scriptscriptstyle 8$};
        \node[black] at (.63, 4.0) {$\scriptscriptstyle 9$};
        \node[black] at (2.1, 2.2) {$\scriptscriptstyle 10$};
         \node[black] at (1.1, .8) {$\scriptscriptstyle 11$};
        \node[black] at (3.1, 2) {$\scriptscriptstyle 12$};
    \end{tikzpicture}
         };
    \endxy  
    }="t5";
%
% 000001
(18,-65)*+{
    \xy
    (0,0)*+{\includegraphics[width=1in]{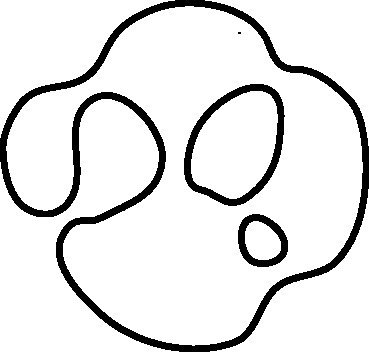}}:
    (0,0)*+{
    \begin{tikzpicture}[scale=.6]
    %
     % %   
        % Orange numbers
        \node[black] at (1.05, 2.9) {$\scriptscriptstyle 1$};
        \node[black] at (.1, 1.16) {$\scriptscriptstyle 2$};
        \node[black] at (-1.9, 2) {$\scriptscriptstyle 3$};
        \node[black] at (.1, 2.9) {$\scriptscriptstyle 4$};
        \node[black] at (.8, 1.35) {$\scriptscriptstyle 5$};
         \node[black] at (2, 1.02) {$\scriptscriptstyle 6$};
        \node[black] at (.63, -0.5) {$\scriptscriptstyle 7$};
        \node[black] at (-.9, 2) {$\scriptscriptstyle 8$};
        \node[black] at (.63, 4.0) {$\scriptscriptstyle 9$};
        \node[black] at (2.1, 2.2) {$\scriptscriptstyle 10$};
         \node[black] at (1.1, .8) {$\scriptscriptstyle 11$};
        \node[black] at (3.1, 2) {$\scriptscriptstyle 12$};
    \end{tikzpicture}
         };
    \endxy  
    }="t6";
{\ar^{d_{\ast00000}=m_{12}} "0"+(10,20);"t1"}; 
{\ar^{d_{0\ast0000}=m_{23}} "0";"t2"}; 
{\ar^{d_{00\ast000}=m_{13}} "0";"t3"}; 
{\ar^{d_{000\ast00}=m_{16}} "0";"t4"}; 
{\ar^{d_{0000\ast0}=m_{36}} "0";"t5"}; 
{\ar_{d_{00000\ast}=m_{23}} "0";"t6"}; 
(-48,15)*{\scriptstyle V_1\otimes V_2 \otimes V_3 \otimes V_6};
(34,75)*{\scriptstyle V_1 \otimes V_3 \otimes V_6};
(48,50)*{\scriptstyle V_1 \otimes V_2 \otimes V_6};
(62,25)*{\scriptstyle V_1 \otimes V_2 \otimes V_6};
(62,-05)*{\scriptstyle V_1 \otimes V_2 \otimes V_3};
(50,-33)*{\scriptstyle V_1 \otimes V_2 \otimes V_3};
(35,-57)*{\scriptstyle V_1 \otimes V_2 \otimes V_6};
\endxy 
$
    \caption{An illustration of the cochain spaces of the knot $6_3$ in cube degrees zero and one.  For this knot, all components of the differential $\widetilde\partial\maps\widetilde C^0\to\widetilde C^1$ come from merge or multiplication maps of the various loops.   }
    \label{fig:6-3-Hom=-zero}
\end{figure}

The cube-degree-zero Laplacian splits as a direct sum over unnormalized $q$-degree:
\[
\widetilde\Delta_0(D)
=
\bigoplus_{k=0}^{\ell}\widetilde\Delta_{0,2k-\ell}(D),
\]
where $k$ is the number of factors labeled by $1$, $n=\ell-k$ is the number labeled by $x$, and $q=k-n=2k-\ell$.

% - - - - - - - - - - - - - - - - - -
\subsubsection{Defining the cube-degree-zero graphs $G_{n,k}(D)$}
% - - - - - - - - - - - - - - - - - -

We now use the decomposition~\eqref{eq:0Lsplit} of $\widetilde\Delta_0(D)$ to construct, for each pair $(n, k)$ with $n + k = \ell$, a weighted graph $G_{n,k}(D)$ on $\binom{n+k}{k}$ vertices whose signless Laplacian recovers $\widetilde\Delta_{0,k-n}(D)$. The graph encodes the action of the multiplications $\mathfrak{m}_{ij}$ and split operators $\mathfrak{s}_i$ on the basis vectors.

\begin{itemize}
    \item \emph{Vertices.} Take $V$ to be the set of sequences $v = v_{i_1} v_{i_2} \cdots v_{i_\ell}$ of length $\ell$ with $n$ entries equal to $x$ and $k$ entries equal to $1$.  
    The indices of $v$ are chosen to match the circle labels of the all-zero resolution of $D$ as explained in Section~\ref{subsubsec:homzero-structure}.  
    
    \item For each $\mathfrak{m}_{ij}$ appearing in Equation~\eqref{eq:0Lsplit},  
        \begin{itemize}
            \item we add a loop from $v$ to itself with weight $w_{ii}=\frac{1}{2}$ if $v_i=v_j=1$,
            \item we add an edge from $v=v_1 \dots v_i \dots v_j \dots v_\ell$ to $v'=v_1 \dots v_j \dots v_i \dots v_\ell$, where the $i$th and $j$th terms have been swapped, if $v_iv_j \in \{1x, x1\}$. 
        \end{itemize} 
    \item For each $\mathfrak{s}_{i}$ appearing in Equation~\eqref{eq:0Lsplit}, 
        \begin{itemize}
            \item we add a loop of weight $\frac{1}{2}$ to vertex $v$ if $v_i = x$,
            \item add a loop of weight $+1$ to vertex $v$ if $v_i = 1$. 
        \end{itemize}
\end{itemize}

Note that because $Q(G)=D_G+A_G$ and the matrices $A_G$ and $D_G$ both have the edge weights associated with loops, the matrix $Q$ will always be integer valued with no fractional values. 

\begin{example} \label{example:6-3-graph}
Let $D_{6_3}$ be the standard diagram with crossings and edges labeled as in Example~\ref{example:trefoil-label}.  As shown in Example~\ref{example:trefoil-zero}, the all-zero resolution is a tensor product of $\ell=4$ copies of $V$ carrying labels $V_1 \otimes V_2 \otimes V_3 \otimes V_6$.  The Laplacian $\widetilde\Delta_0(D_{6_3})$ takes the form
\[ 
\widetilde\Delta_0(D_{6_3}) = \mathfrak{m}_{12}+2\mathfrak{m}_{23} + \mathfrak{m}_{13} + \mathfrak{m}_{16} + \mathfrak{m}_{36}.
\]

Consider $n=k=2$ corresponding to $q$-degree zero.  The Laplacian in this cube bidegree takes the form 
\[
\widetilde\Delta_{0,0}(D_{6_3}) =  
\begin{pmatrix}
5 & 2 & 1 & 0 & 1 & 0 \\
2 & 6 & 1 & 1 & 0 & 1 \\
1 & 1 & 5 & 0 & 1 & 0 \\
0 & 1 & 0 & 4 & 1 & 1 \\
1 & 0 & 1 & 1 & 5 & 2 \\
0 & 1 & 0 & 1 & 2 & 5 \\
\end{pmatrix}
\]
where we have chosen the ordered basis $\{11xx, 1x1x, x11x, 1xx1, x1x1, xx11\}$ to match conventions in Bar-Natan's code for computing Khovanov homology and it differs from our choices in Section~\ref{sec:torsion-knots}.  
The graph $G_{2,2}(D_{6_3})$ then takes the form
\[
\xy
(0,0)*+{\includegraphics[width=5in]{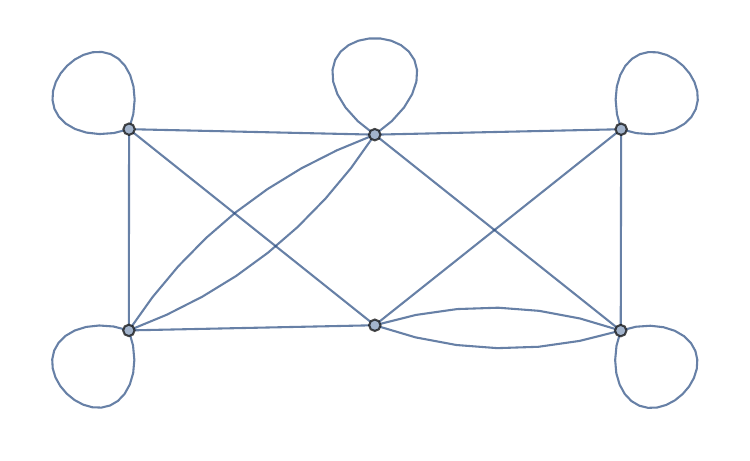}};
(-35,-20)*+{\scriptstyle 11xx};
(0,-20)*+{\scriptstyle x1x1};
(47,-13)*+{\scriptstyle xx11};
(-35,20)*+{\scriptstyle x11 x};
(10,20)*+{\scriptstyle  1x1x};
(47,13)*+{\scriptstyle 1xx1};
(-55,-29)*+{\scriptstyle \frac{1}{2}};
(56,-25)*+{\scriptstyle \frac{1}{2}};
(-57,24)*+{\scriptstyle  1};
(-9,25)*+{\scriptstyle   \frac{1}{2}};
(40,25)*+{\scriptstyle  \frac{1}{2}};
(-40,-31)*+{\scriptstyle \textcolor[rgb]{0.00,0.07,1.00}{m_{12}}};
(-20,-20)*+{\scriptstyle \textcolor[rgb]{0.00,0.07,1.00}{m_{16}}};
(-6,-7)*+{\scriptstyle \textcolor[rgb]{0.00,0.07,1.00}{m_{36}}};
(-45,-2)*+{\scriptstyle \textcolor[rgb]{0.00,0.07,1.00}{m_{13}}};
(-30,-8)*+{\scriptstyle \textcolor[rgb]{0.00,0.07,1.00}{2m_{23}}};
(20,-17)*+{\scriptstyle \textcolor[rgb]{0.00,0.07,1.00}{2m_{23}}};
(38,-29)*+{\scriptstyle\textcolor[rgb]{0.00,0.07,1.00}{m_{36}}};
(-38,29)*+{\scriptstyle \textcolor[rgb]{0.00,0.07,1.00}{2m_{23}}};
(12,25)*+{\scriptstyle \textcolor[rgb]{0.00,0.07,1.00}{m_{13}}};
(58,17)*+{\scriptstyle \textcolor[rgb]{0.00,0.07,1.00}{m_{16}}};
(-20,20)*+{\scriptstyle \textcolor[rgb]{0.00,0.07,1.00}{m_{12}}};
(45,-2)*+{\scriptstyle \textcolor[rgb]{0.00,0.07,1.00}{m_{13}}};
(35,-7)*+{\scriptstyle \textcolor[rgb]{0.00,0.07,1.00}{m_{16}}};
(6,-7)*+{\scriptstyle \textcolor[rgb]{0.00,0.07,1.00}{m_{12}}};
(25,20)*+{\scriptstyle \textcolor[rgb]{0.00,0.07,1.00}{m_{36}}};
\endxy
\]
where we have labelled each edge by the $m_{ij}$ that produced it.  
One can check that the adjacency and degree matrix take the form
\[
D_G =  
\begin{pmatrix}
9/2 & 0 & 0 & 0 & 0 & 0 \\
0 & 11/2 & 0 & 0 & 0 & 0\\
0 & 0 & 4 & 0 & 0 & 0 \\
0 & 0 & 0 & 7/2 & 0 & 0 \\
0 & 0 & 0 & 0 & 5 & 0 \\
0 & 0 & 0 & 0 & 0 & 9/2 \\
\end{pmatrix}
, \qquad 
A_G =  
\begin{pmatrix}
1/2 & 2 & 1 & 0 & 1 & 0 \\
2 & 1/2 & 1 & 1 & 0 & 1 \\
1 & 1 & 1 & 0 & 1 & 0 \\
0 & 1 & 0 &1/2 & 1 & 1 \\
1 & 0 & 1 & 1 & 0 & 2 \\
0 & 1 & 0 & 1 & 2 & 1/2 \\
\end{pmatrix}
\]
so that $\widetilde\Delta_{0,0}(D_{6_3})=Q(G_{2,2}(D_{6_3}))=D_G+A_G$. 
\end{example}

\begin{thm}
  Let $Q(G_{n,k}(D))$ be the signless Laplacian associated to $G_{n,k}(D)$. Then $Q(G_{n,k}(D))=\widetilde\Delta_{0,k-n}(D)$.    
\end{thm}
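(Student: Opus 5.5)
The identity will be proved by linearity. By \eqref{eq:0Lsplit}, $\widetilde\Delta_0(D)$ is a sum of $N$ local operators, each $\mathfrak m_{i,j}$ or $\mathfrak s_i$ tensored with the identity on the remaining factors. In cube degree zero there is no incoming differential, so $\widetilde\Delta_0=\widetilde\partial_0^\dagger\widetilde\partial_0$. Moreover, the $N$ outgoing edges land in distinct summands $\widetilde C^{r'}$, so cross terms vanish and $\widetilde\partial_0^\dagger\widetilde\partial_0=\sum_{r'} d_{rr'}^\dagger d_{rr'}$. This justifies \eqref{eq:0Lsplit}, with the local operators as in \eqref{eq:Delta0-bar}. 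All cube signs on these edges are $+1$, and repeated pairs such as $2\mathfrak m_{23}$ are simply counted with multiplicity.

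On the graph side, $Q(G)=D_G+A_G$ is additive under taking the disjoint union of edge and loop sets on a fixed vertex set. Every edge or loop of $G_{n,k}(D)$ is produced by exactly one term of \eqref{eq:0Lsplit}. So I will write $G_{n,k}(D)$ as the union of subgraphs $G_\mathfrak{m}$ and $G_\mathfrak{s}$, one for each term, and show that $Q(G_{\mathfrak m_{ij}})$ and $Q(G_{\mathfrak s_i})$ equal the restrictions of $\mathfrak m_{i,j}\otimes\mathrm{id}$ and $\mathfrak s_i\otimes\mathrm{id}$ to the degree $q=k-n$ subspace. Each of these local operators preserves the number of $1$'s, so it does restrict to this subspace. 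The theorem then follows by summing.

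The local check is a case analysis on basis vectors $v$, using the convention that a loop of weight $w$ contributes $w$ to both $(D_G)_{vv}$ and $(A_G)_{vv}$, giving $2w$ on the diagonal of $Q$.

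For $\mathfrak m_{ij}$:
\begin{itemize}
\item If $v_iv_j=11$, the loop of weight $\tfrac12$ gives diagonal entry $1$, matching $\ket{11}\mapsto\ket{11}$.
\item If $v_iv_j\in\{1x,x1\}$, the single edge of weight $1$ to the swapped vector $v'$ gives diagonal $1$ and off-diagonal $(v,v')$ entry $1$, matching $\ket{1x}\mapsto\ket{1x}+\ket{x1}$. Here $v'\neq v$ and also has $k$ ones.
\item If $v_iv_j=xx$, there is nothing at $v$, matching $\ket{xx}\mapsto 0$.
\end{itemize}

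For $\mathfrak s_i$ there are no off-diagonal terms:
\begin{itemize}
\item If $v_i=1$, the loop of weight $1$ gives diagonal entry $2$, matching $\ket 1\mapsto 2\ket 1$.
\item If $v_i=x$, the loop of weight $\tfrac12$ gives diagonal entry $1$, matching $\ket x\mapsto\ket x$.
\end{itemize}

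The identity factors act trivially, so these local equalities extend to the full tensor product.

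The only real subtlety, and the step I expect to need care, is the edge convention for the multiplication terms. In the description, the edge "from $v$ to $v'$" is generated once from each endpoint. If it were added twice, the off-diagonal entry would be $2$ instead of $1$. So I will fix the reading that each unordered pair $\{v,v'\}$ related by a swap at $(i,j)$ carries one edge of weight $1$ per occurrence of $\mathfrak m_{ij}$, which gives weighted degree contribution $1$ at each endpoint. I will confirm this against Example~\ref{example:6-3-graph}: there the $2\mathfrak m_{23}$ edge has $A_G$-entry $2$ and $Q$-entry $2$, consistent with this reading.

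Finally, I will note that the basis $\{1,x\}^{\otimes\ell}$ is orthonormal for the inner product \eqref{eq:hermitian-pairing}. This makes the matrix of $\widetilde\Delta_{0,q}$ in the vertex basis the same as the matrix of $Q$, with no metric correction.
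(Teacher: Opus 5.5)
Your proposal is correct and follows the same route as the paper. The paper also splits $\widetilde\Delta_0(D)$ into its $p+t$ local terms and writes the edge and loop weights of $G_{n,k}(D)$ term by term, using one weight-$1$ edge per unordered swapped pair for each occurrence of $\mathfrak m_{ij}$ (exactly your reading); it then compares the entries of $Q=D_G+A_G$ with the action of $\mathfrak m_{ij}$ and $\mathfrak s_i$, and your orthogonality justification of the splitting is a harmless addition.
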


\begin{proof}
The Laplacian $\widetilde\Delta_{0,k-n}(D)$ is a sum of the contributions of each of the $N$ terms from Equation~\eqref{eq:0Lsplit}.  Decompose the Laplacian $\widetilde\Delta_0(D)$ into a sum of $N=p+t$ maps 
\begin{equation}  \label{eq:0Lsplit-bis}
  \widetilde\Delta_0(D) = \sum_{a=1}^{p} \mathfrak{m}_{i_a, j_a} + \sum_{b=1}^t \mathfrak{s}_{i_b} 
\end{equation}
as in  Equation~\eqref{eq:0Lsplit}.   By definition,  $G_{n,k}(D)$ has 
an edge from $v=v_1 \dots v_i \dots v_j \dots v_\ell$ to $v'=v_1 \dots v_j \dots v_i \dots v_\ell$ for each $\mathfrak{m}_{ij}$ in Equation~\eqref{eq:0Lsplit-bis} with $v_iv_j \in \{1x, x1\}$.  Hence, the entry of the adjacency matrix $A$ takes the form 
\[
w_{vv'} =  \sum_{a=1}^{p}
        \left(\delta_{v_{i_a},1} \delta_{v_{j_a},x} \delta_{v'_{i_a},x} \delta_{v'_{j_a},1} \right)  
        +
        \left(\delta_{v_{i_a},x} \delta_{v_{j_a},1} \delta_{v'_{i_a},1} \delta_{v'_{j_a},x} \right)  
\]
for $v \neq v'$, 
and for $v'=v$ we have a weighted loop contributions from $\mathfrak{m}_{ij}$ when $v_iv_j=11$ and always for $\mathfrak{s}_i$ depending on the entry for $v_i$, 
\[
w_{vv} = \sum_{a=1}^{p}\frac{1}{2}\delta_{v_{i_a},1} \delta_{v_{j_a},1}   + \sum_{b=1}^t (\delta_{v_{i_b},1} +\frac{1}{2}\delta_{v_{i_b},x}  ) .
\]
Hence, $Q=A+D$ for $G_{n,k}(D)$ has entries 
\begin{align}
Q_{vv'} & =   w_{vv'} + \delta_{vv'}\sum_{v''} w_{vv''} = 
\delta_{vv'} \left( 2w_{vv} + \sum_{v''\neq v} w_{vv''} \right) + (1-\delta_{vv'})w_{vv'}  
\end{align}
so that  
\begin{align*}
    Q_{vv} &= \sum_{a=1}^{p} \delta_{v_{i_a},1} \delta_{v_{j_a},1}   + \sum_{b=1}^t (2\delta_{v_{i_b},1} 
     + \delta_{v_{i_b},x}  ) 
     \\  &\quad+ \sum_{v''\neq v} \sum_{a=1}^{p}
        \left(\delta_{v_{i_a},1} \delta_{v_{j_a},x} \delta_{v''_{i_a},x} \delta_{v''_{j_a},1} \right)  
        +
        \left(\delta_{v_{i_a},x} \delta_{v_{j_a},1} \delta_{v''_{i_a},1} \delta_{v''_{j_a},x} \right) 
        \\
     &=\sum_{a=1}^{p} \left( \delta_{v_{i_a},1} \delta_{v_{j_a},1}   
      + \sum_{v''\neq v}  
        \left(\delta_{v_{i_a},1} \delta_{v_{j_a},x} \delta_{v''_{i_a},x} \delta_{v''_{j_a},1} \right)  
        +
        \left(\delta_{v_{i_a},x} \delta_{v_{j_a},1} \delta_{v''_{i_a},1} \delta_{v''_{j_a},x} \right)
     \right) \\
     &\quad 
     + \sum_{b=1}^t (2\delta_{v_{i_b},1} + \delta_{v_{i_b},x}  )
\end{align*}
Examining the terms in the summation over $p$,  each $\mathfrak{m}_{ij}$ in Equation~\eqref{eq:0Lsplit-bis} contributes one to the diagonal of $Q$ if $v_iv_j=11$, and one to the diagonal for each $v''$ with $v_iv_j=1x$ and $v''_iv_j''=x1$ or  $v_iv_j=x1$ and $v''_iv_j''=1 x$.  The off-diagonal entries $Q_{vv'}=w_{vv'}$ for $v\neq v'$ similarly have a contribution of one if $v_iv_j=1x$ and $v'_iv_j'=x1$ or  $v_iv_j=x1$ and $v'_iv_j'=1 x$.   Each term in the summation over $t$ contributes 2 to the diagonal for each $\mathfrak{s}_{i}$ in Equation~\eqref{eq:0Lsplit-bis} if $v_i=1$ and 1 if $v_i = x$.  

Comparing the contributions for each of the terms in the $p$ and $t$ summation with the action of $\mathfrak{m}_{ij}$ and $\mathfrak{s}_i$ in Equation~\eqref{eq:Delta0-bar} completes the proof.  

\end{proof}

% - - - - - - - - - - - - - - - - - -
\subsection{An upper bound for \texorpdfstring{$\Psi$}{Psi}}\label{ssec:psi-bound}
% - - - - - - - - - - - - - - - - - -

While directly computing $\Psi$ for a given graph $G_{n,k}(D)$ can be challenging in general, we can more easily compute the quantity $\Psi_V$ corresponding to $S=V$.  This is not the minimal value over all subsets in general, but it does provide an upper bound for $\Psi$, and hence, the smallest eigenvalue of $\widetilde\Delta_{0,k-n}(D)$ by Theorem~\ref{thm:Desai}. 

To compute the quantity $\Psi_V$, observe we have $|V| = \binom{n+k}{k}$ and $\operatorname{cut}(V)=0$.  The quantity $e_{\min}(V)$ has contributions from the sum of the weights over all weighted loops in $G$, since these all must be removed to form a bipartite graph, and contributions from edges that must be removed to make $G$ bipartite.  Example~\ref{example:6-3-graph} shows that the graphs $G_{n,k}(D)$ are, in general, non-bipartite even after forming the graph $G'_{n,k}(D)$ obtained from $G_{n,k}(D)$ by forgetting loops at vertices.

% - - - - - - - - - - - - - - - - - -
\subsubsection{Counting loops}
% - - - - - - - - - - - - - - - - - -

The total weight of loops in $G_{n,k}(D)$ admits a closed-form count in terms of $\ell$, $n$, $k$, and the multiplication/comultiplication counts $p, t$ of~\eqref{eq:0Lsplit}. Recall from the construction that an $\mathfrak{m}_{ij}$ contributes a loop of weight $1/2$ at $v$ exactly when $v_i = v_j = 1$, and a $\mathfrak{s}_i$ contributes a loop at $v$ of weight $1$ if $v_i = 1$ and of weight $1/2$ if $v_i = x$.

We count contributions from multiplication terms first. For a fixed $\mathfrak{m}_{ij}$, the number of vertices $v$ with $v_i = v_j = 1$ depends only on $\ell$, $n$, $k$, not on the specific indices: with the $i$th and $j$th entries forced to $1$, the remaining $\ell - 2$ entries must contain exactly $k - 2$ ones and $n$ x's, so there are $\binom{\ell - 2}{k - 2}$ such vertices, each carrying a loop of weight $1/2$. Summing over the $p$ multiplication terms, the total weight of loops from multiplications is 
\[
 \frac{p}{2} \binom{\ell-2}{k-2}.
\]

Next, we count weighted sums of loops corresponding to the $t$ split terms in the decomposition of $\widetilde\Delta_0(D)$.  To compute this, we compute how many of the $\binom{n+k}{k}$ sequences have $v_i=1$ and how many have $v_i = x$.  Again, these numbers are independent of $i$.  There are $\binom{\ell-1}{k-1}$ such sequences with $v_i=1$ and $\binom{\ell-1}{k}$ with $v_i = x$.  Hence, the weighted sum of loops from the $t$ split terms is 
\[
t \binom{\ell-1}{k-1} + \frac{t}{2} \binom{\ell-1}{k}.
\]

Hence, we have proven the following Lemma.
\begin{lemma} \label{lem:loop-count}
The sum of weights of loops in $G_{n,k}(D)$ is given by 
\[
 \frac{p}{2} \binom{\ell-2}{k-2} + t \binom{\ell-1}{k-1} + \frac{t}{2} \binom{\ell-1}{k}
\]
where $p$ is the number of multiplications and $t$ is the number of comultiplications in the decomposition of $\widetilde\Delta_0(D)$ from Equation~\eqref{eq:0Lsplit}.
\end{lemma}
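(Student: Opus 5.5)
The plan is to compute the total loop weight by linearity over the $N=p+t$ local terms in the decomposition~\eqref{eq:0Lsplit}. By construction of $G_{n,k}(D)$, every loop is produced by exactly one summand $\mathfrak{m}_{i_a,j_a}$ or $\mathfrak{s}_{i_b}$ acting at exactly one vertex. So the total loop weight is
\[
\sum_{a=1}^{p}\sum_{v} w^{(a)}_{vv}+\sum_{b=1}^{t}\sum_{v} w^{(b)}_{vv},
\]
where $w^{(a)}_{vv}$ and $w^{(b)}_{vv}$ are the loop weights contributed at $v$ by the individual terms. Repeated terms, such as $2\mathfrak{m}_{23}$ in Example~\ref{example:6-3-graph}, are counted with multiplicity, consistent with $p$ and $t$ counting summands. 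This agrees with the diagonal formula for $w_{vv}$ in the proof of the preceding theorem. Summing that formula over $v\in V$ and exchanging the order of summation reduces the lemma to counting, for each fixed term, the vertices satisfying the relevant delta conditions.

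For a merge term $\mathfrak{m}_{ij}$ with $i\neq j$, the weight is $\tfrac12$ at each vertex with $v_i=v_j=1$. Fixing these two entries, the remaining $\ell-2$ positions must contain $k-2$ ones and $n$ copies of $x$. This gives $\binom{\ell-2}{k-2}$ vertices, a number independent of the particular pair $(i,j)$. Summing over the $p$ merge terms yields $\frac p2\binom{\ell-2}{k-2}$.

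For a split term $\mathfrak{s}_i$, every vertex receives a loop: weight $1$ if $v_i=1$ and weight $\tfrac12$ if $v_i=x$. There are $\binom{\ell-1}{k-1}$ vertices with $v_i=1$ and $\binom{\ell-1}{k}$ with $v_i=x$, and these counts are again independent of $i$. Summing over the $t$ split terms gives $t\binom{\ell-1}{k-1}+\frac t2\binom{\ell-1}{k}$. Adding the merge and split contributions gives the claimed formula.

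The main point needing care is conventions. We use $\binom{a}{b}=0$ for $b<0$ or $b>a$, so that the degenerate cases $k<2$ or $k=\ell$ are handled automatically. We also need each merge term to involve two distinct circles; this holds because a merge at a crossing always joins two different circles of the all-zero resolution.
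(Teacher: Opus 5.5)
Your proposal is correct and follows the paper's argument. You use linearity over the $p$ merge and $t$ split summands, count $\binom{\ell-2}{k-2}$ vertices with $v_i=v_j=1$ for each $\mathfrak{m}_{ij}$, and count $\binom{\ell-1}{k-1}$ vertices with $v_i=1$ and $\binom{\ell-1}{k}$ with $v_i=x$ for each $\mathfrak{s}_i$. Your extra remarks are sound and make explicit points the paper leaves implicit: repeated summands are counted with multiplicity, the binomial conventions cover the degenerate cases, and a merge always joins two distinct circles.
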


\begin{example}
In Example~\ref{example:6-3-graph} of the graph $G_{2,2}(D_{6_3})$ for the standard diagram $D_{6_3}$ at $k=2$, we have $\ell=4$, $k=n=2$, $p=6$, and $t=0$.  The weighted sum of the loops appearing in $G_{2,2}(D_{6_3})$ is 
\[
\frac{6}{2} \binom{2}{2} = 3.
\]
\end{example}

% - - - - - - - - - - - - - - - - - -
\subsubsection{Odd cycles in $G_{n,k}(D)$  } \label{subsec:odd-cycle}
% - - - - - - - - - - - - - - - - - -
The number of odd cycles in $G_{n,k}(D)$ can also be determined from the decomposition of $\widetilde\Delta_0(D)$.  Let $G'_{n,k}(D)$ denote the graph $G_{n,k}(D)$ with all loops removed.  Since comultiplication only contributes loops, we need only consider the $p$ multiplications appearing in the decomposition of $\widetilde\Delta_0(D)$
\[
 \sum_{a=1}^{p} \mathfrak{m}_{i_a, j_a}
\]
The cycle structure of $G'_{n,k}(D)$ is the same as the cycle structure in the set of unordered pairs $\left\{(i_a,j_a) \right\}_{a=1}^p$. 

Let $\Upsilon$ denote the finite set of simple odd cycles in the index graph whose vertices are the tensor-factor labels and whose edge multiset is $\{(i_a,j_a)\}_{a=1}^p$. Thus an element $u\in\Upsilon$ has the form $(a_0,a_1)(a_1,a_2)\cdots(a_{2r},a_0)$, with distinct vertices $a_0,\ldots,a_{2r}$ and no repeated edge. In the computation of $e_{\min}(V)$, we must remove edges to make $G'_{n,k}(D)$ bipartite. The graph $G'_{n,k}(D)$ contains odd cycles arising from each $u\in\Upsilon$, corresponding to the two choices $1x$ and $x1$. Removing a hitting set of ordinary edges for these cycles, together with all loops, makes the graph bipartite.

Some odd cycles in $\Upsilon$ may intersect, and some edges may have weightings, so selecting a minimal size set of weighted edges to remove from $G'_{n,k}(D)$ to make it bipartite can be nontrivial.  Clearly, $e_{\min}(V)$ is less than or equal to the sum over $u \in \Upsilon$ of the minimal weighting of an edge appearing in $u$.  Specifically, for  $u= (a_0,a_1)(a_1,a_2)\cdots(a_{2r},a_0) \in \Upsilon$, define  $\varrho(u)$ and $\Upsilon_D$ as
\[
\varrho(u)  = \min_{(i,j) \in u} w_{ij} , \qquad  \Upsilon_D:= \sum_{u \in \Upsilon} \varrho(u).
\]  

\begin{lemma} \label{lem:eV-bound}
For the graph $G_{n,k}(D)$,
\begin{align} \label{eq:eminV}
   e_{\min}(V) &=  \frac{p}{2} \binom{\ell-2}{k-2} + t \binom{\ell-1}{k-1} + \frac{t}{2} \binom{\ell-1}{k} + e'_{\min}(V) \\
   &\leq  \frac{p}{2} \binom{\ell-2}{k-2} + t \binom{\ell-1}{k-1} + \frac{t}{2} \binom{\ell-1}{k} +2 \Upsilon_D . \nonumber
\end{align}
Here $e'_{\min}(V)$ is the minimum total weight of non-loop edges that must be removed from $G'_{n,k}(D)$ to make it bipartite.
\end{lemma}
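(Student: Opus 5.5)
The argument has two parts: an exact identity and an inequality. I would split the edge set of $G=G_{n,k}(D)$ into the loops and the ordinary edges of the loopless graph $G'_{n,k}(D)$. I would then show that the cheapest way to make $G$ bipartite is to delete all loops together with a cheapest bipartizing set of ordinary edges.

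\textbf{The identity.} A loop is an odd cycle of length one. Any subgraph containing a loop is therefore non-bipartite, so every set $F$ whose removal makes $G$ bipartite must contain all loops. Deleting loops does not change the bipartiteness of what remains. Hence $F\setminus\{\text{loops}\}$ makes $G'_{n,k}(D)$ bipartite. Conversely, all loops together with any bipartizing set for $G'_{n,k}(D)$ bipartizes $G$. Taking minima over such $F$ gives
\[
e_{\min}(V)=(\text{total loop weight})+e'_{\min}(V).
\]
Since $S=V$, there is no cut term to worry about. Lemma~\ref{lem:loop-count} then supplies the closed form for the loop weight, which gives the first line of \eqref{eq:eminV}.

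\textbf{The inequality.} It remains to show $e'_{\min}(V)\le 2\Upsilon_D$. For this I would use the lifting of cycles described in Section~\ref{subsec:odd-cycle}. Every edge of $G'_{n,k}(D)$ comes from some $\mathfrak m_{i_aj_a}$ acting as a transposition of the entries at positions $i_a,j_a$ when they differ. Consider the cycle space of $G'$ modulo $2$. I would argue that every odd cycle of $G'$ projects, via the labels $(i_a,j_a)$ of its edges, to a closed walk in the index graph with an odd number of edges. Such a walk contains an odd simple cycle $u\in\Upsilon$. So it suffices to destroy the parity obstruction attached to each $u\in\Upsilon$.

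For each $u$, choose an edge $(i,j)\in u$ achieving $\varrho(u)$. Remove the lifts of that index edge to $G'$, and count their weight as $2\varrho(u)$, the factor $2$ coming from the two orientations $1x$ and $x1$ singled out in Section~\ref{subsec:odd-cycle}. Then check that the remaining graph admits a two-coloring. Summing over $u$ gives $e'_{\min}(V)\le\sum_u 2\varrho(u)=2\Upsilon_D$.

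\textbf{Main obstacle.} The delicate step is the bookkeeping of lifts. A single index edge $(i,j)$ generally lifts to many edges of $G'$, one for each vertex with $v_iv_j\in\{1x,x1\}$. So "removing the edge of $u$" must be interpreted exactly as the paper does, accounting the weight of the removed lifted edges by $2\varrho(u)$. I would first try to make this precise by exhibiting an explicit two-coloring of $G'$ after the removals. The natural candidate is a signed parity: fix a spanning structure of the index graph with the chosen edges deleted, assign each position a sign, and color a vertex $v$ by the product of the signs at the positions where $v$ has a $1$. If that fails in general, I would fall back on the weaker statement that the removals kill the particular odd cycles arising from each $u$, which is what the displayed inequality is really recording.
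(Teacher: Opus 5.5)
Your proof of the identity (first line) is correct and is the paper's argument. Every loop must be deleted, what remains is exactly the bipartization problem for $G'_{n,k}(D)$, and Lemma~\ref{lem:loop-count} supplies the loop weight.

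\textbf{The gap.} It lies in the inequality, at the step where you count the deleted lifts as costing $2\varrho(u)$. An index edge $(i,j)$ of weight $w_{ij}$ lifts to one edge of $G'_{n,k}(D)$ for each placement of the remaining $k-1$ labels $1$ on the other $\ell-2$ circles. Deleting its lifts therefore costs $w_{ij}\binom{\ell-2}{k-1}$. The patterns $1x$ and $x1$ are the two endpoints of a single lifted edge, not two edges.

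Your signed-parity coloring is the right tool. Let $c$ be a proper two-coloring of the index graph with one minimum-weight edge of each $u\in\Upsilon$ deleted. A hop across any surviving index edge changes the parity of $\#\{i: v_i=1,\ c(i)=1\}$, so deleting all lifts of the chosen edges bipartizes $G'$. Carried out honestly, this proves only
\[
e'_{\min}(V)\le\binom{\ell-2}{k-1}\,\Upsilon_D .
\]
That implies the stated bound only when $\binom{\ell-2}{k-1}\le 2$, e.g.\ for $(\ell,k)=(4,2)$ as in the $6_3$ example. The paper's one-line proof appeals to ``the cycle-removal estimate above'', which contains the same miscount, so it offers no way around this.

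\textbf{The inequality is false.} No argument can close the gap. Take the paper's $(2,5)$-torus-knot setup: circles $1,\dots,5$, merges across $(1,2),(2,3),(3,4),(4,5),(5,1)$, $t=0$, and $k=2$. Then $\Upsilon$ consists of the single $5$-cycle, $\Upsilon_D=1$, and the lemma would give $e'_{\min}(V)\le 2$.

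Write $\{a,b\}$ for the state labeled $1$ on circles $a,b$. The $15$ edges of $G'_{3,2}$ split into three edge-disjoint odd cycles, each read cyclically, with consecutive states differing by a single hop:
\[
\{1,3\},\{3,5\},\{3,4\},\{2,4\},\{2,3\};\qquad
\{3,5\},\{2,5\},\{1,5\},\{1,4\},\{4,5\};\qquad
\{2,4\},\{1,4\},\{1,3\},\{1,2\},\{2,5\}.
\]
Hence $e'_{\min}(V)\ge 3$. The parity coloring above shows $e'_{\min}(V)=3=\binom{3}{1}\Upsilon_D$.

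Your fallback of killing only ``the particular odd cycles arising from each $u$'' therefore cannot give the displayed inequality either. The correct second line has $\binom{\ell-2}{k-1}\Upsilon_D$ in place of $2\Upsilon_D$. The same example also contradicts the paper's later assertion that two edge deletions always suffice for $T_{(2,N)}$.
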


\begin{proof}
The loop contribution is exactly the total loop weight computed in Lemma~\ref{lem:loop-count}; each such loop must be removed to make the graph bipartite. The remaining contribution is the minimum total weight $e'_{\min}(V)$ of ordinary edges that must be removed from the loop-free graph $G'_{n,k}(D)$. The bound by $2\Upsilon_D$ follows from the cycle-removal estimate above.
\end{proof}

\begin{proposition} \label{prop:upper}
The smallest eigenvalue $\lambda_{\min}$ of $\widetilde\Delta_{0,k-n}(D)$ satisfies
\[
\lambda_{\min} \leq
\frac{4\left(\frac{p}{2} \binom{\ell-2}{k-2} + t \binom{\ell-1}{k-1} + \frac{t}{2} \binom{\ell-1}{k} +2 \Upsilon_D\right)}{\binom{n+k}{k}} .
\]
\end{proposition}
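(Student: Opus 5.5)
The plan is to chain three facts already in place: the identification of the Laplacian with a signless Laplacian, the upper half of the Desai--Rao estimate, and the choice $S=V$ in the minimum defining $\Psi$.

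First, by the theorem identifying $Q(G_{n,k}(D))=\widetilde\Delta_{0,k-n}(D)$, the smallest eigenvalue $\lambda_{\min}$ of $\widetilde\Delta_{0,k-n}(D)$ equals $\lambda_{\min}(Q(G_{n,k}(D)))$. The graph $G_{n,k}(D)$ has positive weights and possibly loops, so Theorem~\ref{thm:Desai} applies in its weighted form. Its right-hand inequality gives
\[
\lambda_{\min}\le 4\Psi(G_{n,k}(D)).
\]

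Second, $\Psi(G)$ is a minimum over all nonempty vertex subsets. Taking $S=V$ therefore gives
\[
\Psi\le\Psi_V=\frac{e_{\min}(V)+\operatorname{cut}(V)}{|V|}.
\]
Here $\operatorname{cut}(V)=0$, since no edges leave the whole vertex set. The vertex count is $|V|=\binom{n+k}{k}$, because vertices are the sequences of length $\ell=n+k$ with exactly $k$ entries equal to $1$.

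Third, we bound $e_{\min}(V)$ using Lemma~\ref{lem:eV-bound}. That lemma gives the loop-weight term from Lemma~\ref{lem:loop-count} plus $e'_{\min}(V)\le 2\Upsilon_D$. Substituting into $\lambda_{\min}\le 4\Psi_V$ yields exactly the displayed inequality.

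The only step needing care is the bound $e'_{\min}(V)\le 2\Upsilon_D$ inside Lemma~\ref{lem:eV-bound}, which I am taking as given. If it has to be justified, I would argue as follows. Every odd cycle of the loop-free graph $G'_{n,k}(D)$ projects to a closed walk in the index graph that uses an odd number of index edges. Such a walk must traverse some simple odd cycle $u\in\Upsilon$. For each $u$, deleting all graph edges that lie over a minimum-weight index edge of $u$ destroys every odd cycle lying over $u$. The factor $2$ accounts for the two orientations $1x$ and $x1$, as the paper describes.

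I expect the main obstacle to be making this lifting argument precise, especially checking that the removed weight per $u$ is really bounded by $2\varrho(u)$ rather than by a multiple depending on the number of vertices. Since Lemma~\ref{lem:eV-bound} is stated earlier, I will simply invoke it.
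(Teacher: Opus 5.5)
Your reduction is the paper's proof: identify $\widetilde\Delta_{0,k-n}(D)$ with $Q(G_{n,k}(D))$, use $\lambda_{\min}\le 4\Psi\le 4\Psi_V$ with $\operatorname{cut}(V)=0$ and $|V|=\binom{n+k}{k}$, then quote Lemma~\ref{lem:eV-bound}. The step you flagged is a genuine gap, and the paper's proof has the same gap. The inequality $e'_{\min}(V)\le 2\Upsilon_D$ is false in general.

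An index edge $(i,j)$ lifts to $\binom{\ell-2}{k-1}$ edges of $G'_{n,k}(D)$, each of weight $w_{ij}$: one for each placement of the remaining $k-1$ labels $1$. Each single lift already joins a $1x$ state to the corresponding $x1$ state, so the ``two orientations'' do not account for this multiplicity. Your deletion argument can be made precise: pick a minimum-weight edge on each $u\in\Upsilon$, note that the remaining index graph is bipartite with parts $P\sqcup P^c$, and two-color states $S$ by $|S\cap P|\bmod 2$. Done this way, it proves only $e'_{\min}(V)\le\binom{\ell-2}{k-1}\Upsilon_D$. The weaker bound cannot be improved to $2\Upsilon_D$. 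For $T_{(2,5)}$ with $k=2$, the graph $G'_{3,2}$ is a pentagon on the five non-adjacent pairs, with the five adjacent pairs subdividing the pentagram diagonals. Checking two-colorings of the pentagon gives $e'_{\min}(V)=3>2=2\Upsilon_D$.

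The displayed inequality itself also fails, so no better justification of that step can exist. Take a trefoil diagram whose all-zero resolution has three circles joined pairwise by its three crossings, as in the trefoil torsion example. Form its connected sum with $m$ copies of $TU_1$, all attached along arcs of one of these circles. Then $\ell=p=m+3$, $t=0$ and $\Upsilon_D=1$, and for $k=2$ the right-hand side equals $4(m+7)/((m+2)(m+3))$. Reduce by the symmetry that permutes the pendant circles and swaps the other two triangle circles. The other symmetry sectors have eigenvalues of size at least about $1$. The smallest eigenvalue of $\widetilde\Delta_{0,1-m}(D)$ is then the least positive root of
\[
4-\lambda=\frac{m(3-\lambda)}{3m+10-(m+7)\lambda+\lambda^2}+\frac{2(2-\lambda)}{6-(m+4)\lambda+\lambda^2},
\]
which is asymptotically $14/(3m)$. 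For $m=100$ it is about $0.0451$, while the claimed bound is about $0.0407$. What your route, and the paper's, actually establishes is the proposition with $2\Upsilon_D$ replaced by $\binom{\ell-2}{k-1}\Upsilon_D$, or by the exact value $e'_{\min}(V)$.
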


\begin{proof}
Taking $S=V$ gives $\Psi\leq \Psi_V=e_{\min}(V)/|V|$. The result follows from the upper bound $\lambda_{\min}(Q)\leq 4\Psi$ in Theorem~\ref{thm:Desai} and Lemma~\ref{lem:eV-bound}.
\end{proof}

\begin{example}
In Example~\ref{example:6-3-graph}, the cube degree zero Laplacian of the standard diagram $D_{6_3}$ decomposes as
\[
\widetilde\Delta_0(D_{6_3}) = \mathfrak{m}_{12}+2\mathfrak{m}_{23} + \mathfrak{m}_{13} + \mathfrak{m}_{16} + \mathfrak{m}_{36}.
\]
Moreover,
\[
\Upsilon = \left\{
(1,2)(2,3)(1,3), \; (1,3) (3,6) (1,6)
\right\}.
\]
The edge $(2,3)$ has weight $2$, and $(1,3)$ appears in both cycles. Removing both non-loop edges labeled $(1,3)$ from $G'_{2,2}(D_{6_3})$ produces a bipartite graph, so that $e_{\min}(V)=2+\#\text{loops}=5$. The coarser estimate using $\Upsilon_D=2$ gives $e_{\min}(V)\leq 4+\#\text{loops}=7$, and hence Proposition~\ref{prop:upper} gives $\lambda_{\min}\leq 4\cdot 7/6=14/3$. Using the exact value $\Psi_V=e_{\min}(V)/|V|=5/6$ instead gives the sharper upper bound $\lambda_{\min}\leq 10/3$. The actual value is $\lambda_{\min}=0.78916$.
\end{example}

\begin{remark}
In this section, we have focused on constructing graphs for Khovanov homology at cube degree zero.  A similar technique can be used in maximal cube degree using the maps
 \begin{alignat*}{3}
  \Delta_V\Delta_V^{\dagger} \maps V \otimes V&\to V \otimes V   \qquad \qquad    
        &&  m m^{\dagger} \maps V \to V  \\
 \ket{11} &\mapsto 0 \qquad \qquad    
    && \quad\; \ket{1} \mapsto \ket{1 }   \\
 \ket{1x} &\mapsto \ket{1x} + \ket{x1} \qquad \qquad    && \quad\; \ket{x} \mapsto 2\ket{x}\\
\ket{x1}  &\mapsto \ket{1x} + \ket{x1}  \\
\ket{xx}  &\mapsto \ket{xx} \\
\end{alignat*}
This essentially just swaps the role of $1$ and $x$ in our analysis. 
\end{remark}

% - - - - - - - - - - - - - - - - - -
\subsection{Graphs for twisted unknots}\label{ssec:twisted-unknot-graphs}
% - - - - - - - - - - - - - - - - - -
We first examine the twisted-unknot diagrams $TU_N$ in the unnormalized cube grading of Section~\ref{ssec:khovanov-complex}. The later torus-knot and twist-knot results are stated in normalized Khovanov bidegrees. 
\[
TU_N \;\; := \;\; 
\hackcenter{   \begin{tikzpicture}[scale=.6]
      \draw [ very thick]    (1,1) .. controls +(0,.35) and +(0,-.35) .. (2,2);
  \draw [ very thick]    (1,2) .. controls +(0,.35) and +(0,-.35) .. (2,3);
   \draw [ very thick]    (1,4) .. controls +(0,.35) and +(0,-.35) .. (2,5);
  \path [fill=white] (1.35,1) rectangle (1.65,5);
  \draw [ very thick]    (2,1) .. controls +(0,.35) and +(0,-.35) .. (1,2);
  \draw [ very thick]    (2,2) .. controls +(0,.35) and +(0,-.35) .. (1,3);
    \draw [ very thick]    (2,4) .. controls +(0,.35) and +(0,-.35) .. (1,5);
     \draw [ very thick]    (2,5) .. controls +(0,.5) and +(0,.5) .. (1,5);
     \draw [ very thick]    (2,1) .. controls +(0,-.5) and +(0,-.5) .. (1,1);
      \node at (1.5,3.75) {$\vdots$};
    \end{tikzpicture}  } 
    \qquad \qquad 
    \sigma^N \;\; := \;\; 
\hackcenter{   \begin{tikzpicture}[scale=.6]
      \draw [ very thick]    (1,1) .. controls +(0,.35) and +(0,-.35) .. (2,2);
  \draw [ very thick]    (1,2) .. controls +(0,.35) and +(0,-.35) .. (2,3);
   \draw [ very thick]    (1,4) .. controls +(0,.35) and +(0,-.35) .. (2,5);
  \path [fill=white] (1.35,1) rectangle (1.65,5);
  \draw [ very thick]    (2,1) .. controls +(0,.35) and +(0,-.35) .. (1,2);
  \draw [ very thick]    (2,2) .. controls +(0,.35) and +(0,-.35) .. (1,3);
    \draw [ very thick]    (2,4) .. controls +(0,.35) and +(0,-.35) .. (1,5); 
      \node at (1.5,3.75) {$\vdots$};
    \end{tikzpicture}  }
\]
Twisted unknots are much easier to analyze than general knots because the decomposition of $\widetilde\Delta_0(TU_N)$ from Equation \eqref{eq:0Lsplit} takes an especially nice form.  The all 0-resolution of $TU_N$ has $N+1$ circles and the Laplacian decomposes as
\[
\widetilde\Delta_{0}(TU_N) = \sum_{i=1}^N \mathfrak{m}_{i,i+1}.    
\]

\begin{proposition} \label{prop-TU-bipartite}
Removing all weighted loops from $G_{n,k}(TU_N)$ to form $G'_{n,k}(TU_N)$ gives a bipartite connected graph. 
\end{proposition}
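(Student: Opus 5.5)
We take the decomposition $\widetilde\Delta_0(TU_N)=\sum_{i=1}^N\mathfrak m_{i,i+1}$ as given. The tensor factors are indexed $1,\dots,N+1$ in linear order along the twist region. By construction, the non-loop edges of $G'_{n,k}(TU_N)$ are exactly the pairs $\{v,v'\}$ in which $v'$ is obtained from $v$ by swapping the adjacent entries $v_i,v_{i+1}$, with $v_iv_{i+1}\in\{1x,x1\}$. Each such $i$ contributes one edge of weight $1$, and no pair of positions is repeated. So $G'_{n,k}(TU_N)$ is precisely the graph on binary words of length $\ell=N+1$ with exactly $k$ ones, whose edges are adjacent transpositions of unequal letters. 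We prove the two claims separately for this graph.

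\emph{Bipartiteness.} Define the inversion statistic
\[
\operatorname{inv}(v)=\#\{(a,b): a<b,\ v_a=x,\ v_b=1\}.
\]
Swapping adjacent entries $v_iv_{i+1}=1x$ to $x1$ (or back) changes $\operatorname{inv}$ by exactly $\pm1$. No other pair changes relative order, because only the pair $(i,i+1)$ is affected and the other entries keep their positions relative to both. Hence every edge joins a vertex of even $\operatorname{inv}$ to one of odd $\operatorname{inv}$. The parity of $\operatorname{inv}$ is therefore a proper two-coloring, so the graph has no odd cycles.

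\emph{Connectedness.} We show that every vertex is connected to the sorted word $1^k x^n$. If $v$ is not sorted, it contains an adjacent pattern $x1$. Swapping that pattern to $1x$ traverses an edge and lowers $\operatorname{inv}$ by one. Repeating this (bubble sort), we reach the unique word with $\operatorname{inv}=0$, namely $1^kx^n$, after finitely many steps. Hence all vertices lie in one component. Edge cases cause no trouble: if $k=0$ or $n=0$, there is a single vertex, which is trivially connected and bipartite.

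\emph{Main obstacle.} The only point needing care is the first step: confirming from the edge-labelling convention of Section~\ref{subsubsec:homzero-structure} that for $TU_N$ the merges really pair consecutive circle labels, so that the edges are adjacent transpositions. This is what allows us to use the inversion parity. If the labels were instead an arbitrary tree of pairs, connectivity would still follow from transpositions along a connected tree. However, bipartiteness would then need the sign of the permutation realizing $v$ relative to a reference word, and that is well defined only up to this tree structure. Here it reduces to the inversion count above. We take the consecutive-pair form directly from the stated decomposition of $\widetilde\Delta_0(TU_N)$.
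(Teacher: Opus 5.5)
Your proof is correct and is essentially the paper's own argument (its second proof): your inversion statistic $\operatorname{inv}(v)$ is the length $\ell(w)$ of the minimal coset representative of $\mathfrak S_k\times\mathfrak S_n$ that the paper uses, and adjacent transpositions of unequal letters change it by $\pm1$, which gives the two-coloring. Your bubble sort is the reduction of $w$ to the identity coset, which gives connectivity; your elementary phrasing also makes explicit the two facts the paper only asserts, namely that each edge changes the length by exactly one and that every vertex reaches $1^kx^n$.
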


\begin{proof}
Following the analysis in Section~\ref{subsec:odd-cycle}, it is clear that for twisted unknots, $\Upsilon = \emptyset$ since there are no cycles in the set $\{(i,i+1) \}_{i=1}^N$.  Another way to see this is to write any vertex $v$ in $G_{n,k}(TU_N)$ as $w(1 \dots 1 x \dots x)$ where $w$ is the permutation mapping $1 \dots 1 x \dots x$ (with $k$ entries $1$ and $n$ entries x) to $v$ without swapping the set of $1$s or the set of $x$'s; equivalently, $w$ ranges over coset representatives of $\mathfrak{S}_k \times \mathfrak{S}_n \subset \mathfrak{S}_{N+1}$.  Let $\ell(w)$ denote the length of any reduced expression of $w$ as a product of elementary transpositions.  

To see that $G'_{n,k}(TU_N)$ bipartite, observe that edges  connect vertices $w(1 \dots 1 x \dots x)$ to vertices $w'(1 \dots 1 x \dots x)$ only if $\ell(w)=\ell(w')\pm 1$. Moreover, the graph structure is determined by the Bruhat order of the symmetric group.  Specifically, for $w',w$ two coset representatives of $\mathfrak{S}_n \times \mathfrak{S}_k \in \mathfrak{S}_{N+1}$, there will be an edge from the vertex labeled by $w$ to the vertex labeled $w'$ if and only if $w'=s_iw$ is a reduced expression for $w'$, where $s_i$ is a simple transposition.   This also shows that the resulting graph will be connected. 
\end{proof}

\begin{remark}
The proof of Proposition~\ref{prop-TU-bipartite} shows that the graphs $G_{n,k}(TU_N)$  can be interpreted as Schreier coset graphs $Sch(\mathfrak{S}_{N+1},\mathfrak{S}_k \times \mathfrak{S}_n, x)$ for the symmetric group $\mathfrak{S}_{N+1}$ with subgroup $\mathfrak{S}_k \times \mathfrak{S}_n$ and generating set $S=\{s_1, \dots , s_N\}$ the set of simple transpositions.  When $x$ generates $G/H$ the graph $Sch(\mathfrak{S}_{N+1},\mathfrak{S}_k \times \mathfrak{S}_n, x)$ is connected~\cite[Section 6.3]{MR2882891}.  
\end{remark}

Proposition~\ref{prop-TU-bipartite} simplifies the computation of $\Psi$ for twisted unknots as we only need to count weights for loops at vertices in subsets $S\subset V$. 
Given a subset $S\subset V$, let $\operatorname{loop}(S)$ denote the weighted sum of loops at the vertices in $S$.  Then $e_{\min}(S)=\operatorname{loop}(S)$ and $\Psi_S=(\operatorname{loop}(S)+\operatorname{cut}(S))/|S|$. 
  Lemma~\ref{lem:loop-count} implies that for $G_{n,k}(TU_N)$ we have 
 \begin{equation} \label{eq:PsiV}
    \Psi_V =  \frac{N}{2}\frac{\binom{N-1}{k-2}}  {\binom{N+1}{k}} = \frac{k(k-1)}{2(N+1)}
 \end{equation}

The following is immediate from Theorem~\ref{thm:Desai}. 

\begin{corollary} \label{cor:gapbound}
Let $n+k=N+1$.  The minimal eigenvalue $\lambda_{\min}$ of $\widetilde\Delta_{0,k-n}(TU_N)$  satisfies
\[
\lambda_{\min} \leq \frac{2k(k-1)}{(N+1)}.
\]
In particular, when $k=0$, and $k=1$, the kernel of the Laplacian is nontrivial.
\end{corollary}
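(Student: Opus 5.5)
The upper bound follows at once from the upper half of the Desai--Rao inequality in Theorem~\ref{thm:Desai}, combined with the value of $\Psi_V$ in~\eqref{eq:PsiV}. First, by the identification $\widetilde\Delta_{0,k-n}(TU_N)=Q(G_{n,k}(TU_N))$ proved above, $\lambda_{\min}$ is the smallest eigenvalue of the signless Laplacian of $G=G_{n,k}(TU_N)$. Theorem~\ref{thm:Desai} then gives $\lambda_{\min}\le 4\Psi(G)$. Since $\Psi(G)$ is a minimum over nonempty vertex subsets, we may take $S=V$ to get $\Psi(G)\le \Psi_V$. Because $\operatorname{cut}(V)=0$, we have $\Psi_V=e_{\min}(V)/|V|$.

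Next I would justify~\eqref{eq:PsiV}. By Proposition~\ref{prop-TU-bipartite}, the loop-free graph $G'_{n,k}(TU_N)$ is bipartite, so $e_{\min}(V)$ is exactly the total loop weight. Since $TU_N$ has only merge terms, with $p=N$, $t=0$ and $\ell=N+1$, Lemma~\ref{lem:loop-count} gives this loop weight as $\frac N2\binom{N-1}{k-2}$. Dividing by $|V|=\binom{N+1}{k}$ and simplifying the binomial ratio,
\[
\frac{\binom{N-1}{k-2}}{\binom{N+1}{k}}=\frac{k(k-1)}{(N+1)N},
\]
we obtain $\Psi_V=k(k-1)/(2(N+1))$. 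Hence $\lambda_{\min}\le 2k(k-1)/(N+1)$.

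For the final assertion, when $k\in\{0,1\}$ the right-hand side is $0$. Since $Q$ is positive semidefinite, this forces $\lambda_{\min}=0$, so the kernel is nontrivial. A direct check confirms this: for $k\le1$ no vertex has two entries equal to $1$, so there are no loops, and $G=G'$ is bipartite and connected. Its two-coloring then gives the kernel vector of Corollary~\ref{cor:harmonic-2-coloring}.

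The only real step is the binomial simplification together with noting that bipartiteness of $G'$ makes $e_{\min}(V)$ equal to the loop weight. Neither step poses a real obstacle. One should also note the edge cases $k<2$, where $\binom{N-1}{k-2}=0$ by convention, so the formula for $\Psi_V$ is consistent with the direct check above.
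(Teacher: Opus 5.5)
Your proposal is correct and follows the paper's own argument. The paper treats the bound as immediate from the upper Desai--Rao inequality $\lambda_{\min}\le 4\Psi\le 4\Psi_V$ together with the value of $\Psi_V$ in \eqref{eq:PsiV}, which it derives exactly as you do from Lemma~\ref{lem:loop-count} and the bipartiteness of $G'_{n,k}(TU_N)$; your direct check for $k\le 1$ via Corollary~\ref{cor:harmonic-2-coloring} is a harmless supplement.
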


\begin{remark}
Corollary~\ref{cor:gapbound} implies that the bound from Proposition \ref{prop:upper} cannot be used to find an example of a knot with an exponentially small spectral gap using $TU_N$ in cube degree zero. 
\end{remark}

Computing $\Psi$ explicitly, and hence a lower bound, even for $TU_N$, can be challenging.  The subset $S$ giving the minimal value of $\Psi_S$ is, in general, not the entire set of vertices $V$. 
 Given subsets $S\subset S' \subset V$, we write $\operatorname{cut}(S,S')$ for $\operatorname{cut}(S)$ thought of as a subset of the restricted graph $G_{S'}$.  Clearly, 
 \begin{equation}  \label{eq:loopcut-ind}
\operatorname{loop}(S')=\operatorname{loop}(S)+\operatorname{loop}(S'\setminus S), \qquad \operatorname{cut}(S')=\operatorname{cut}(S)-\operatorname{cut}(S,S')+\operatorname{cut}(S'\setminus S,V\setminus S),
\end{equation}
where $S'\setminus S$ denotes set difference.

\begin{thm} \label{thm:subsets}
For the diagram $TU_N$ and $S\subset S' \subset V$, we have $\Psi_{S} < \Psi_{S'}$ if and only if $t > |S'\setminus S|\Psi_{S'}$ where $t=\operatorname{loop}(S'\setminus S)-\operatorname{cut}(S,S')+\operatorname{cut}(S'\setminus S,V\setminus S)$.
\end{thm}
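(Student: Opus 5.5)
The plan is to reduce the statement to a single linear identity relating the numerators of $\Psi_S$ and $\Psi_{S'}$, and then rearrange an inequality. Throughout, $S$ is assumed nonempty, so that $\Psi_S$ is defined.

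First I will justify that for every nonempty $T\subseteq V$,
\[
\Psi_T=\frac{\operatorname{loop}(T)+\operatorname{cut}(T)}{|T|}.
\]
By Proposition~\ref{prop-TU-bipartite}, the loop-free graph $G'_{n,k}(TU_N)$ is bipartite. Every induced subgraph of a bipartite graph is bipartite, so after deleting the loops of $G[T]$ no further edges need to be removed. Each loop is itself an odd cycle, so all of them must be deleted. Hence $e_{\min}(T)=\operatorname{loop}(T)$, as stated just before the theorem.

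Next I will add the two identities in \eqref{eq:loopcut-ind}. Their sum gives
\[
\operatorname{loop}(S')+\operatorname{cut}(S')=\operatorname{loop}(S)+\operatorname{cut}(S)+t,
\]
with $t=\operatorname{loop}(S'\setminus S)-\operatorname{cut}(S,S')+\operatorname{cut}(S'\setminus S,V\setminus S)$ exactly as in the statement. Multiplying through by the denominators, this reads
\[
|S'|\,\Psi_{S'}=|S|\,\Psi_S+t.
\]

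This cut identity is the only step with real content, and it is where I expect the main check to lie. Here $\operatorname{cut}(S,S')$ counts the edges from $S$ into $S'\setminus S$. The term $\operatorname{cut}(S'\setminus S,V\setminus S)$ counts the edges from $S'\setminus S$ to $V\setminus S'$. To confirm the identity, I will classify each edge with exactly one endpoint in $S'$ by whether that endpoint lies in $S$ or in $S'\setminus S$. I will also check that edges from $S$ to $S'\setminus S$ are removed exactly once. Edge weights enter linearly, so the weighted version follows in the same way.

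Finally, since $|S|>0$, the inequality $\Psi_S<\Psi_{S'}$ is equivalent to $|S|\Psi_S<|S|\Psi_{S'}$. Substituting $|S|\Psi_S=|S'|\Psi_{S'}-t$, this becomes $|S'|\Psi_{S'}-t<|S|\Psi_{S'}$. Since $|S'|-|S|=|S'\setminus S|$, this is equivalent to $t>|S'\setminus S|\,\Psi_{S'}$. Every step is an equivalence, which gives both directions of the claim.
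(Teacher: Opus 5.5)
Your proposal is correct and is essentially the paper's argument. You use $e_{\min}=\operatorname{loop}$ (from Proposition~\ref{prop-TU-bipartite}) and the identities \eqref{eq:loopcut-ind} to write the numerator of $\Psi_S$ as that of $\Psi_{S'}$ minus $t$, then clear the positive denominators; your explicit checks that $e_{\min}(T)=\operatorname{loop}(T)$ on induced subgraphs and that the cut identity holds fill in steps the paper treats as evident.
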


\begin{proof}
By Proposition~\ref{prop-TU-bipartite} and Equation~\eqref{eq:loopcut-ind},
\[
\Psi_S
=
\frac{\operatorname{loop}(S')+\operatorname{cut}(S')-t}
{|S'|-|S'\setminus S|},
\qquad
\Psi_{S'}
=
\frac{\operatorname{loop}(S')+\operatorname{cut}(S')}{|S'|}.
\]
Therefore $\Psi_S<\Psi_{S'}$ if and only if
\[
|S'|t
>
|S'\setminus S|\bigl(\operatorname{loop}(S')+\operatorname{cut}(S')\bigr),
\]
which is equivalent to $t>|S'\setminus S|\Psi_{S'}$.
\end{proof}

We now focus on $G_{n,2}(TU_N)$.  For $k=2$, as in the basis used in Theorem~\ref{thm:tu-gap}, it is convenient to indicate the location of the two $1$'s by a 2-tuple $\{ a_1,a_2\}$ with $1 \leq a_1 < a_2 \leq N+1$.
Any such $v=\{a_1,a_2\}$ is connected to between 4 and 1 edges $\{a_1\pm 1, a_2\pm 1 \}$ in $G_{n,2}(TU_N)$ depending on if $a_1\neq 1$, $a_2\neq N+1$ or $a_2=a_1+1$.   Recall our convention that $\{a,b\}=0$ if $a=b$, $a<1$ or $b>N+1$. 

Observe that a vertex $v$ of $G_{n,2}(TU_N)$ has a loop if and only if the $k=2$ $1$'s appearing in $v$ are adjacent, so that $v=\{a,a+1\}$ for $1\leq a \leq N$.
Vertices connected to a vertex  $v=\{ a, a+1\}$ with a loop will never have a loop since $\{a-1,a+1\}$ and $\{a, a+2\}$ never have adjacent $1$'s. 

\begin{lemma} \label{lem:loopLcut}
For any proper subset $T \subset V$ in $G_{n,2}(TU_N)$ we have $\operatorname{loop}(T)<\operatorname{cut}(T)$.
\end{lemma}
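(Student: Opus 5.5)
The plan is to charge each loop in $T$ to a cut edge of $T$, using two families of edge-disjoint paths in $G_{n,2}(TU_N)$.

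\textbf{Setup.} Write vertices as $\{a_1,a_2\}$ with $1\le a_1<a_2\le N+1$. The non-loop edges come from the merges $\mathfrak m_{i,i+1}$. They change exactly one of $a_1,a_2$ by $\pm1$, and each has weight $1$ (no $\mathfrak m_{i,i+1}$ is repeated for $TU_N$). The only loops sit at the diagonal vertices $\{a,a+1\}$, each of weight $\tfrac12$. Hence
\[
\operatorname{loop}(T)=\tfrac12\,\#\{a:\{a,a+1\}\in T\},
\]
and it suffices to show
\[
\operatorname{cut}(T)>\tfrac12 L_T,\qquad L_T:=\#\{a:\{a,a+1\}\in T\}.
\]
If $L_T=0$, then $T$ is a nonempty proper subset of a connected graph (Proposition~\ref{prop-TU-bipartite}), so $\operatorname{cut}(T)\ge1>0$. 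From now on assume $L_T\ge1$.

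\textbf{Paths.} For each $a$, I would use two paths starting at the loop vertex $\{a,a+1\}$:
\begin{itemize}
\item the row $R_a=\{\{a,b\}: a<b\le N+1\}$, which varies only $a_2$;
\item the column $C_{a+1}=\{\{c,a+1\}:1\le c\le a\}$, which varies only $a_1$.
\end{itemize}
Row edges and column edges are different edges. Distinct rows are vertex-disjoint, and so are distinct columns. So all these paths are pairwise edge-disjoint. A path that starts in $T$ and is not contained in $T$ contributes at least one cut edge of its own. Let $A$ be the set of $a$ with $\{a,a+1\}\in T$ and $R_a\cup C_{a+1}\subseteq T$. Then every loop vertex of $T$ with $a\notin A$ yields a distinct cut edge, so
\[
\operatorname{cut}(T)\ \ge\ L_T-|A|.
\]

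\textbf{Handling $A$.} It remains to show $L_T-|A|>\tfrac12L_T$, i.e.\ $|A|<\tfrac12L_T$, or else to find extra cut edges to compensate for $A$. I expect this to be the main obstacle. I would first use properness: pick $u=\{c,d\}\notin T$. Then $c\notin A$ and $d-1\notin A$, since $u\in R_c$ and $u\in C_d$. For $a\in A$, the "cross" $R_a\cup C_{a+1}$ lies in $T$ and meets every row $R_{a'}$ with $a'<a$ (at $\{a',a+1\}$) and every column $C_{b}$ with $b>a+1$ (at $\{a,b\}$).

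So I would compare the row and column through $u$ with the crosses of $A$. The segment of $R_c$ between a crossing point in $T$ and $u$ contains a cut edge. Likewise, the segment of $C_d$ between a crossing point and $u$ contains a cut edge. These cut edges lie on $R_c$ and $C_d$, which are disjoint from the paths already charged whenever the loop vertices $\{c,c+1\}$ or $\{d-1,d\}$ are not in $T$. The remaining case is when those loop vertices are in $T$; then their own paths $R_c$ and $C_d$ already have cut edges, and these are among the ones counted. In that case I would look for additional disjoint paths instead, namely the parallel rows and columns between the crosses of $A$ and $u$. Each such parallel line starts in a cross, which lies in $T$, and must exit $T$ before reaching the side of $u$. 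I would check that these extra cut edges number at least $|A|$ (or at least $2|A|-L_T+1$), which gives
\[
\operatorname{cut}(T)\ \ge\ L_T-|A|+|A|>\tfrac12 L_T .
\]
If this bookkeeping fails, the fallback is a direct isoperimetric argument on the triangular grid. One would show that any proper $T$ containing $L_T$ diagonal vertices has at least $L_T$ boundary edges in the grid interior, by projecting onto rows and columns, as in the discrete Loomis--Whitney inequality.
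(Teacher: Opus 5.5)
There is a genuine gap, and it cannot be closed, because the inequality is false for every $N\ge 3$. It holds only for $N\le 2$, and $T=\varnothing$ must be excluded in any case. Take $u=\{N,N+1\}$ and $T=V\setminus\{u\}$.
\begin{itemize}
\item The only non-loop neighbour of $u$ is $\{N-1,N+1\}$, coming from $\mathfrak m_{N-1,N}$; the merge $\mathfrak m_{N,N+1}$ only produces the loop at $u$. So $\operatorname{cut}(T)=1$.
\item $T$ contains the $N-1$ loop vertices $\{a,a+1\}$ with $a\le N-1$, so $\operatorname{loop}(T)=\tfrac{N-1}{2}$.
\end{itemize}
For $N=3$ this gives equality $1=1$. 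You can read this off the $6\times 6$ matrix in the $TU_3$ example: the row of $xx11$ has a single off-diagonal entry, and $11xx$ and $x11x$ carry loops of weight $\tfrac12$. For $N\ge 4$ the inequality is reversed. More crudely, $V\setminus\{u\}$ for any vertex $u$ has cut at most $4$ and loop weight at least $\tfrac{N-1}{2}$.

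Your argument fails exactly at the step you flagged. In this example the missing vertex lies only on $R_N$ and $C_{N+1}$, so every $a\le N-1$ belongs to $A$. Hence $|A|=L_T$, and your bound $\operatorname{cut}(T)\ge L_T-|A|$ is vacuous. The $|A|=N-1$ compensating cut edges you hoped to find on ``parallel rows and columns'' do not exist, since $T$ has only one cut edge. The fallback isoperimetric claim (a proper $T$ with $L_T$ diagonal vertices has at least $L_T$ boundary edges) fails on the same $T$.

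The paper's proof has the same defect. It correctly observes that a loop vertex contributes positively to $\operatorname{loop}(T)-\operatorname{cut}(T)$ only if all its neighbours lie in $T$. But the conclusion that this forces $T=V$ does not follow.

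What Corollary~\ref{cor:bound} actually needs, via Theorem~\ref{thm:subsets} with $S'=V$, is only the weaker inequality $\operatorname{loop}(T)-\operatorname{cut}(T)\le |T|\,\Psi_V=|T|/(N+1)$. The example above does not violate this, since $\tfrac{N-3}{2}<\tfrac N2-\tfrac1{N+1}$. Any repair therefore has to let $|T|$ enter the comparison, rather than pitting loops against cut edges alone.
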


\begin{proof}
Observe that if $v=\{a,a+1\}$ is a vertex with a weight $1/2$ loop it will have at least one edge $(v,v')$ in $G_{n,2}(TU_N)$. The only way this vertex can contribute positively to the quantity $\operatorname{loop}(T)-\operatorname{cut}(T)$ is if all of the edges $(v,v')$ from $v$ connect to vertices $v'$ that are also in $T$.  Loop edges with $k=2$ always connect to non-loop edges $\{a-1,a+1\}$ or $\{ a,a+2\}$.  Non-loop vertices contribute nothing to $\operatorname{loop}(T)$ but always have at least two incident edges.  Hence, the vertex $v$ contributes positively only if every vertex in $V$ is in $T$, contradicting that $T$ was proper. 
\end{proof}

\begin{corollary} \label{cor:bound}
For $k=0,1,2$, one has $\Psi = \Psi_V$ for $G_{n,k}(TU_N)$. Hence, 
\[
 \frac{\Psi_V^2}{4 d^*} =  \frac{1}{16}\left(\frac{k(k-1)}{2(N+1)}\right)^2 \leq \lambda_{\min} \leq  \frac{2k(k-1)}{(N+1)}=4\Psi_V.
\]  
where $d^*$ is the maximal degree of a vertex in $G_{n,k}(TU_N)$.   
\end{corollary}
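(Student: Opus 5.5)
The plan is to show that $\Psi_S\geq\Psi_V$ for every nonempty $S\subseteq V$ when $k\in\{0,1,2\}$, and then read off the displayed inequalities from Theorem~\ref{thm:Desai} together with \eqref{eq:PsiV}.

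By Proposition~\ref{prop-TU-bipartite}, the loop-free graph $G'_{n,k}(TU_N)$ is bipartite. Hence for any $S$ we have $e_{\min}(S)=\operatorname{loop}(S)$, and so $\Psi_S=(\operatorname{loop}(S)+\operatorname{cut}(S))/|S|$.

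The cases $k=0,1$ are immediate. No vertex then has two $1$'s, so there are no loops at all and $\Psi_V=0=k(k-1)/(2(N+1))$. Since $\Psi\geq0$ always, we get $\Psi=\Psi_V=0$. This is consistent with the nontrivial kernel noted in Corollary~\ref{cor:gapbound}.

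For $k=2$, fix a proper nonempty $S\subsetneq V$ and set $T=V\setminus S$, also proper and nonempty. Since $\operatorname{cut}(V)=0$, we have
\[
\operatorname{loop}(S)+\operatorname{cut}(S)=\operatorname{loop}(V)-\operatorname{loop}(T)+\operatorname{cut}(T).
\]
Lemma~\ref{lem:loopLcut} applied to $T$ gives $\operatorname{cut}(T)-\operatorname{loop}(T)>0$, so the numerator of $\Psi_S$ strictly exceeds $\operatorname{loop}(V)=|V|\Psi_V$. Because $|S|<|V|$, it follows that $\Psi_S>\Psi_V$.

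An alternative route is Theorem~\ref{thm:subsets} with $S'=V$. The quantity $t$ there equals $\operatorname{loop}(T)+\operatorname{cut}(T)$, and the comparison becomes $\operatorname{loop}(T)+\operatorname{cut}(T)>|T|\Psi_V$. That is harder to control directly, which is why I would use the complement identity above instead. Taking the minimum over $S$ gives $\Psi=\Psi_V$ in all three cases.

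For the bounds, Theorem~\ref{thm:Desai} gives $\Psi^2/(4d^*)\leq\lambda_{\min}\leq4\Psi$. Substituting $\Psi_V=k(k-1)/(2(N+1))$ from \eqref{eq:PsiV} yields the upper bound $2k(k-1)/(N+1)$. To match the displayed constant $1/16$ in the lower bound, I would bound $d^*\leq4$. For $k=2$ the weighted degree of a vertex $\{a_1,a_2\}$ counts at most four edges $\{a_1\pm1,a_2\pm1\}$ plus loop weight. A vertex with a loop has weight $1/2$ there but only at most two edges, so its degree is at most $5/2$. In general $d^*\leq4$ (and for $k\leq1$ the bound is trivially $0$). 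This gives $\Psi_V^2/(4d^*)\geq\Psi_V^2/16$.

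The main obstacle is this step. Lemma~\ref{lem:loopLcut} is only stated for proper subsets, so it must be applied to the complement rather than to $S$, with the orientation of that argument checked carefully. The degree bound $d^*\leq4$ also has to be confirmed carefully to match the stated constant.
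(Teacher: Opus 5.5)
Your argument is the paper's: it also passes to $T=V\setminus S$ and invokes Lemma~\ref{lem:loopLcut} to make the quantity $t$ in \eqref{eq:t} nonpositive. (With $S'=V$ that quantity is $t=\operatorname{loop}(T)-\operatorname{cut}(T)$, not $\operatorname{loop}(T)+\operatorname{cut}(T)$. Your ``alternative route'' is therefore the same computation as your complement identity.)

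The gap is the step ``Lemma~\ref{lem:loopLcut} applied to $T$ gives $\operatorname{cut}(T)-\operatorname{loop}(T)>0$'': that lemma is false for $N\geq 3$. Take $N=4$, $k=2$, and $T=V\setminus\{\{4,5\}\}$. The vertex $\{4,5\}$ has the single neighbour $\{3,5\}$, so $\operatorname{cut}(T)=1$. But $T$ contains the loop vertices $\{1,2\},\{2,3\},\{3,4\}$, so $\operatorname{loop}(T)=3/2>\operatorname{cut}(T)$. In general $T=V\setminus\{\{N,N+1\}\}$ gives $\operatorname{loop}(T)-\operatorname{cut}(T)=(N-3)/2$, so even the non-strict inequality fails. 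For $N=3$ this is already an equality, contradicting strictness. The lemma's local argument only shows that a loop vertex contributes positively when its at most two neighbours lie in $T$, and that does not force $T=V$.

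The corollary is nonetheless true. What you need is only $\operatorname{loop}(T)-\operatorname{cut}(T)\leq |T|\Psi_V$, equivalently $\operatorname{loop}(S)+\operatorname{cut}(S)\geq |S|/(N+1)$ for every nonempty $S$, and this requires a global argument. A flow argument works:
\begin{itemize}
\item Let every vertex $\{a,b\}$ emit one unit. Send half along $\{a,b\}\to\{a,b-1\}\to\cdots\to\{a,a+1\}$ and half along $\{a,b\}\to\{a+1,b\}\to\cdots\to\{b-1,b\}$.
\item Each loop vertex $\{c,c+1\}$ then absorbs $\tfrac{N+1-c}{2}+\tfrac{c}{2}=\tfrac{N+1}{2}$, which is $(N+1)$ times its loop weight.
\item Every edge carries at most $\tfrac{N-1}{2}$ units.
\item Conservation of flow out of $S$ gives $|S|\leq (N+1)\operatorname{loop}(S)+\tfrac{N-1}{2}\operatorname{cut}(S)$. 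Hence $\Psi_S\geq 1/(N+1)=\Psi_V$, with equality at $S=V$.
\end{itemize}

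The rest of your proposal is fine: the cases $k=0,1$, the identity $e_{\min}(S)=\operatorname{loop}(S)$, and the bound $d^*\leq 4$. That bound is what the displayed ``$=$'' actually requires, since for $k=2$ one has $d^*<4$ when $N$ is small (e.g.\ $d^*=3$ for $N=3$).
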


\begin{proof}
Theorem~\ref{thm:subsets} can be helpful in computing the subset $S$ minimizing $\Psi_S$.  Applying Theorem~\ref{thm:subsets} when $S'=V$ and using Equation \eqref{eq:PsiV} implies that $\Psi_S < \Psi_V$ if and only if
\begin{equation} \label{eq:t}
    t = \operatorname{loop}(V\setminus S)-\operatorname{cut}(S)=\operatorname{loop}(V\setminus S)-\operatorname{cut}(V\setminus S)  > |V-S| \frac{k(k-1)}{2(N+1)} 
\end{equation}
This implies that if one is trying to find a subset $S$ with a smaller value of $\Psi_S$ than $\Psi_V$, one must select a subset of the vertices of $V$ with the property that the loop count in the vertices we remove is larger (by $|V-S| \frac{k(k-1)}{2(N+1)}$) than the connections from $S$ to the removed vertices in the graph.  For $k=0$ or $k=1$ this is impossible. For $k=2$, taking $T=V/S$, it follows that $t\leq 0$, so that $t$ never satisfies the inequality Equation \eqref{eq:t}. 
\end{proof}

\begin{example}[$N=3$ Twisted unknot]
Let $n=k=2$ with $n+k=N+1=4$.  Then the Laplacian has the form 
\[
\widetilde\Delta_{0,0}(TU_3) =   \mathfrak{m}_{12} + \mathfrak{m}_{23}+\mathfrak{m}_{34} = 
\left(
\begin{array}{cccccc}
 2 & 1 & 0 & 0 & 0 & 0 \\
 1 & 3 & 1 & 1 & 0 & 0 \\
 0 & 1 & 3 & 0 & 1 & 0 \\
 0 & 1 & 0 & 2 & 1 & 0 \\
 0 & 0 & 1 & 1 & 3 & 1 \\
 0 & 0 & 0 & 0 & 1 & 2 \\
\end{array}
\right)
\]
\[G_{2,2}(TU_3) = 
\xy
(0,0)*+{\includegraphics[width=3.5in]{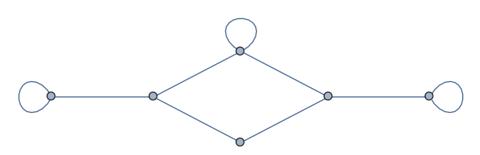} };
(-35,-8)*+{\scriptstyle 11xx};
(-18,-8)*+{\scriptstyle  1x1x};
(18,-8)*+{\scriptstyle x1x1};
(35,-8)*+{\scriptstyle xx11};
(-8,6)*+{\scriptstyle x11 x};
(0,-15)*+{\scriptstyle 1xx1};
(-37,2)*+{\scriptstyle \frac{1}{2}};
(38,2)*+{\scriptstyle \frac{1}{2}}; 
(6,10)*+{\scriptstyle   \frac{1}{2}};
(-45,-5)*+{\scriptstyle \textcolor[rgb]{0.00,0.07,1.00}{m_{12}}};
(-25,-1)*+{\scriptstyle \textcolor[rgb]{0.00,0.07,1.00}{m_{23}}};
(26,-1)*+{\scriptstyle \textcolor[rgb]{0.00,0.07,1.00}{m_{23}}};
(-7,-11)*+{\scriptstyle \textcolor[rgb]{0.00,0.07,1.00}{m_{34}}};
(-11,2)*+{\scriptstyle \textcolor[rgb]{0.00,0.07,1.00}{m_{12}}};
(9,-11)*+{\scriptstyle \textcolor[rgb]{0.00,0.07,1.00}{m_{12}}};
(10,2)*+{\scriptstyle \textcolor[rgb]{0.00,0.07,1.00}{m_{34}}};
(45,-5)*+{\scriptstyle \textcolor[rgb]{0.00,0.07,1.00}{m_{34}}};
(0,12)*+{\scriptstyle \textcolor[rgb]{0.00,0.07,1.00}{m_{23}}};
\endxy
\]
the minimal eigenvalue is $\lambda_{\min}(Q)=\frac{1}{2} \left(3-\sqrt{5}\right)=0.381966$.  In this case, the minimum for $\Psi$ occurs for $S=V$ where $e_{\min}(S)=\operatorname{loop}(S)=3/2$, $\operatorname{cut}(S)=0$, and $|S|=6$, so that $\Psi=1/4$. Then Theorem~\ref{thm:Desai} implies that 
\[
    \frac{\Psi^2}{4d^*} =   \frac{(1/4)^2}{12} = \frac{1}{192} =0.005208\leq 0.381966= \lambda_{\min}(Q) \leq 4\Psi = 1. 
\]
\end{example}

\subsubsection{The exact spectral gap of \texorpdfstring{$TU_N$}{TU\textunderscore N}}\label{sssec:tu-min-gap}

The bound of Corollary~\ref{cor:gapbound} can be sharpened to a closed-form expression in the $q$-degree where the minimum gap is conjectured to occur.

For a real symmetric matrix $L$ indexed by a finite set $V$, define the
\emph{support graph} of $L$ to be the graph on $V$ with an edge $\{u,v\}$
for distinct vertices $u,v$ whenever $L_{uv}\neq 0$.

\begin{lemma}\label{lem:positive-eigenvector}
Let $L$ be a real symmetric matrix indexed by a finite set $V$ with
$L_{uv}\leq 0$ for all $u\neq v$, and suppose that the support graph of $L$
is connected. If
\[
LF=\lambda F
\]
for a vector $F$ with $F(u)>0$ for every $u\in V$, then $\lambda$ is the
smallest eigenvalue of $L$, and this eigenvalue is simple.
\end{lemma}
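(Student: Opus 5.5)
This is a Perron--Frobenius statement, and we plan to prove it by shifting $L$ to a nonnegative matrix. The direct route is a ground-state transformation, which avoids quoting Perron--Frobenius at all; we carry that out first.

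Let $c\geq\max_u L_{uu}$ and set $M:=cI-L$. Then $M$ is a symmetric matrix with nonnegative entries, and its off-diagonal support graph is the support graph of $L$, which is connected, so $M$ is irreducible. The hypothesis becomes $MF=(c-\lambda)F$ with $F$ entrywise positive. Showing that $\lambda$ is the smallest simple eigenvalue of $L$ is the same as showing that $c-\lambda$ is the largest eigenvalue of $M$ and that it is simple.

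\textbf{Ground-state transformation.} For an arbitrary real vector $g$, we write $g=Fh$ coordinatewise, with $h(u)=g(u)/F(u)$. Expanding $\langle g,Lg\rangle$ and using $(LF)(u)=\lambda F(u)$ to eliminate the diagonal terms gives
\[
\langle g,Lg\rangle-\lambda\langle g,g\rangle
=\sum_{\{u,v\},\,u\neq v}(-L_{uv})\,F(u)F(v)\,\bigl(h(u)-h(v)\bigr)^2 .
\]
To check this, note that the diagonal contribution $\sum_u L_{uu}F(u)^2h(u)^2$ equals $\sum_u\bigl(\lambda F(u)^2-\sum_{v\neq u}L_{uv}F(u)F(v)\bigr)h(u)^2$. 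Combining this with the cross terms $\sum_{u\neq v}L_{uv}F(u)F(v)h(u)h(v)$ completes the square edge by edge.

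Every coefficient $-L_{uv}F(u)F(v)$ is nonnegative, so the right-hand side is nonnegative. Hence $\langle g,Lg\rangle\geq\lambda\|g\|^2$ for all $g$. By the Rayleigh quotient characterization, $\lambda$ is the minimum eigenvalue.

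\textbf{Simplicity.} Suppose $g$ is any eigenvector of $L$ for the eigenvalue $\lambda$. Then the left-hand side of the identity vanishes. Each edge term on the right is nonnegative, so each one vanishes. Every edge $\{u,v\}$ of the support graph has $-L_{uv}F(u)F(v)>0$, which forces $h(u)=h(v)$. Because the support graph is connected, $h$ is constant, and therefore $g$ is a multiple of $F$. The $\lambda$-eigenspace of the symmetric matrix $L$ is thus one-dimensional, and $\lambda$ is simple.

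The only step that needs care is the algebraic identity. In particular, it must be valid with the diagonal entries of $L$ arbitrary, including loops. This works because the diagonal terms are eliminated entirely by the eigen-equation $LF=\lambda F$, which is exactly where positivity of $F$ is used. All other steps are immediate. If one prefers, the conclusion can instead be quoted from the Perron--Frobenius theorem applied to the irreducible matrix $M$: a positive eigenvector of an irreducible nonnegative matrix belongs to its spectral radius, and that eigenvalue is simple.
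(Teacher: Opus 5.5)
Your proof is correct, but it takes a different route from the paper.

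\textbf{The paper's route.} The paper starts from an unknown ground state. It takes a unit eigenvector $G$ for the smallest eigenvalue $\mu$ and shows $\langle |G|,L|G|\rangle\leq\langle G,LG\rangle$, so $|G|$ is also a ground state. It then applies the eigen-equation at a vanishing entry, together with connectedness, to show $G$ has no zeros and therefore one strict sign. It concludes $\lambda=\mu$ because the strictly positive $F$ cannot be orthogonal to a strictly signed $G$. Simplicity follows because no two strictly signed vectors are orthogonal.

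\textbf{Your route.} You use the given $F$ from the start. Your ground-state identity says exactly that $\operatorname{diag}(F)\,(L-\lambda I)\,\operatorname{diag}(F)$ is the weighted graph Laplacian with edge weights $-L_{uv}F(u)F(v)\geq 0$. Hence $L-\lambda I$ is congruent to a positive semidefinite matrix whose kernel is the constants. Minimality and simplicity then follow in one step. The identity is valid for arbitrary diagonal entries, as you claim.

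\textbf{Comparison.} The paper's argument proves slightly more. It shows that every ground state of such an $L$ has strict sign, which is the Perron--Frobenius existence statement, and it uses $F$ only at the very end. Your argument is shorter and avoids the absolute-value and zero-entry steps. It also yields an exact quadratic-form identity that could be used quantitatively, for instance to control the next eigenvalue through the weighted Laplacian with weights $F(u)F(v)$.

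\textbf{Two cosmetic points.} The shift $M=cI-L$ plays no role in your main argument. Also, positivity of $F$ is not what eliminates the diagonal terms; that uses only $LF=\lambda F$. Positivity enters when you define $h=g/F$ and when you check the sign of the edge coefficients.
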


\begin{proof}
Let $\mu$ be the smallest eigenvalue of $L$, and let $G$ be a unit
$\mu$ eigenvector. Write $|G|$ for the vector with entries $|G(u)|$. Since
$L$ is symmetric, $L_{uv}\leq 0$ for $u\neq v$, and
$|G(u)||G(v)|\geq G(u)G(v)$, we have
\[
\begin{aligned}
\langle |G|,L|G|\rangle
&=
\sum_u L_{uu}G(u)^2
+
2\sum_{u<v}L_{uv}|G(u)||G(v)| \\
&\leq
\sum_u L_{uu}G(u)^2
+
2\sum_{u<v}L_{uv}G(u)G(v) \\
&=
\langle G,LG\rangle
=
\mu.
\end{aligned}
\]
The Rayleigh principle gives the reverse inequality, so equality holds and
$|G|$ is also a $\mu$ eigenvector.

Suppose that $|G(u)|=0$ for some $u\in V$. The $u$th entry of
$(L-\mu)|G|=0$ gives
\[
\sum_{v\neq u}L_{uv}|G(v)|=0.
\]
Every summand is nonpositive. Moreover, $L_{uv}<0$ whenever $u$ and $v$ are
adjacent in the support graph. It follows that $|G(v)|=0$ for every neighbor
$v$ of $u$. Connectedness then forces $G=0$, a contradiction. Thus every
$\mu$ eigenvector has no zero entries.

Equality in the first displayed inequality also implies
\[
|G(u)||G(v)|=G(u)G(v)
\]
on every edge of the support graph. Hence $G$ has the same sign at adjacent
vertices. Since the support graph is connected, $G$ has one strict sign on
all of $V$.
After replacing $G$ by $-G$ if necessary, we may assume that $G(u)>0$ for
every $u$. If $\lambda\neq\mu$, symmetry of $L$ would imply
$\langle F,G\rangle=0$, which is impossible since both vectors are strictly
positive. Therefore $\lambda=\mu$.

Finally, if the $\mu$ eigenspace had dimension greater than one, it would
contain a nonzero vector orthogonal to $F$. This is impossible because every
$\mu$ eigenvector has one strict sign. Thus $\mu$ is simple.
\end{proof}

\begin{thm}\label{thm:tu-gap}
For every $N\geq 1$, the spectral gap of the Khovanov Laplacian of the
twisted unknot $TU_N$ in bidegree $(0,3-N)$ is
\[
\operatorname{gap}\bigl(\widetilde\Delta_{0,3-N}(TU_N)\bigr)
=
2-2\cos\left(\frac{\pi}{N+2}\right)
=
4\sin^2\left(\frac{\pi}{2(N+2)}\right).
\]
The smallest eigenvalue is simple.
\end{thm}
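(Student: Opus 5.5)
Throughout, $\ell=N+1$, and $q=k-n=2k-(N+1)=3-N$ forces $k=2$. So we are computing the smallest eigenvalue of $Q=Q\bigl(G_{N-1,2}(TU_N)\bigr)=\widetilde\Delta_{0,3-N}(TU_N)$. We first record why this smallest eigenvalue is the gap. By Corollary~\ref{cor:bound} with $k=2$, $\lambda_{\min}\geq \Psi_V^2/(4d^*)>0$, since $\Psi_V=1/(N+1)>0$. Hence the kernel is zero and the gap equals $\lambda_{\min}$.

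I index vertices by pairs $\{a_1<a_2\}\subseteq\{1,\dots,N+1\}$ recording the positions of the two $1$'s. I then read off $Q$ from $\widetilde\Delta_0(TU_N)=\sum_i\mathfrak m_{i,i+1}$ and \eqref{eq:Delta0-bar}. Each bond $(i,i+1)$ contributes as follows.
\begin{itemize}
\item If exactly one of $i,i+1$ is occupied, it contributes $1$ to the diagonal and $+1$ to the entry for the hop $\{a_1,a_2\}\to$ the pair with that $1$ moved across the bond.
\item If both are occupied, it contributes $1$ to the diagonal and nothing else.
\item If neither is occupied, it contributes $0$.
\end{itemize}
Thus $Q_{vv}=\deg(v)+[\,a_2=a_1+1\,]$, where $\deg$ is the degree in the loop-free graph $G'$. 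Equivalently, $Q=D+A+P$, with $P$ the diagonal indicator of adjacent pairs (the weight-$\tfrac12$ loops).

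Next I remove the signs. Each hop changes $a_1+a_2$ by $\pm1$, so with $S=\operatorname{diag}((-1)^{a_1+a_2})$ we get $L:=SQS=D-A+P$. This is the ordinary graph Laplacian of $G'$ plus a unit potential on the ``diagonal'' pairs. The off-diagonal entries of $L$ are $\le 0$. Its support graph is $G'$, which is connected by Proposition~\ref{prop-TU-bipartite}. So Lemma~\ref{lem:positive-eigenvector} applies: it suffices to exhibit a strictly positive vector $F$ with $LF=\bigl(2-2\cos\tfrac{\pi}{N+2}\bigr)F$. Then that value is the smallest eigenvalue and it is simple.

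It is convenient to reparametrize by $b_1=a_1$, $b_2=a_2-1$. This gives the closed triangle $1\le b_1\le b_2\le N$, in which hops become nearest-neighbour moves and the diagonal $b_1=b_2$ carries the potential. One checks that on the diagonal the degree drops by exactly the missing inward move, so that $L$ looks like a lattice Laplacian on the triangle with a specific boundary condition along $b_1=b_2$ and along the outer edges $b_1=1$, $b_2=N$. With $\theta=\pi/(N+2)$, I will first try a separable-type ansatz built from the single-particle Dirichlet mode $\sin(j\theta)$ on $N+1$ sites. Concretely, I will try
\[
F(a_1,a_2)=\sin(a_1\theta)\,c(a_2)+\sin(a_2\theta)\,c(a_1),
\]
and then functions of $a_1+a_2$ and $a_2-a_1$ separately, for instance $\cos\bigl((a_1+a_2-(N+2))\theta/2\bigr)$ times a function of $a_2-a_1$. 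In each case I will tune the free factor so that the interior equation, the diagonal equation (where the potential $+1$ must exactly compensate the missing move), and the two outer boundary equations all give the same eigenvalue $2-2\cos\theta$. Small cases fix the guess: $N=1$ gives the $1\times1$ matrix $(1)=2-2\cos(\pi/3)$, and $N=2,3$ can be checked directly against the matrices computed above. Positivity of $F$ should then follow because all angles involved lie in $(0,\pi)$.

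The main obstacle is finding the exact eigenvector: the interior, diagonal and outer-boundary equations must all be consistent with a single eigenvalue, and the diagonal potential is what makes this delicate. If no closed ansatz works, the fallback is to solve $LF=\lambda F$ by a transfer-matrix recursion along anti-diagonals $a_1+a_2=\text{const}$, on which $L$ acts tridiagonally. I would then show that $\lambda=2-2\cos\theta$ admits a solution that is positive everywhere and meets the boundary conditions, and conclude with Lemma~\ref{lem:positive-eigenvector} as above.
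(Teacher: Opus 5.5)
\paragraph{Verdict.} Your reduction is correct and is the same as the paper's, but the proposal has a genuine gap: it never produces the eigenvector that the whole argument depends on.

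\paragraph{What is correct.} The entrywise description $Q=D+A+P$ is right. Conjugating by $\operatorname{diag}\bigl((-1)^{a_1+a_2}\bigr)$ gives $L=D-A+P$, which has nonpositive off-diagonal entries and a connected support graph. Lemma~\ref{lem:positive-eigenvector} then reduces the theorem to exhibiting one strictly positive $F$ with $LF=(2-2\cos\theta)F$, where $\theta=\pi/(N+2)$. (The appeal to Corollary~\ref{cor:bound} is unnecessary: once the lemma identifies $2-2\cos\theta>0$ as the smallest eigenvalue, the kernel is zero.)

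\paragraph{The gap.} That eigenvector is the entire substance of the theorem, and you do not produce it. You list candidate ans\"atze, promise to ``tune the free factor,'' and sketch a fallback recursion. As written, nothing shows that $2-2\cos\theta$ is even an eigenvalue of $L$. Two further problems:
\begin{itemize}
\item Your first candidate builds in Dirichlet behaviour, but the outer boundary conditions are reflecting. The true eigenvector, extended by its formula, satisfies $F(0,b)=F(1,b)$ and $F(a,N+2)=F(a,N+1)$. Already for $N=3$, no choice of $c$ makes $\sin(a_1\theta)c(a_2)+\sin(a_2\theta)c(a_1)$ proportional to the Perron vector.
\item For a cosine form, positivity requires arguments in $(-\pi/2,\pi/2)$, not $(0,\pi)$.
\end{itemize}

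\paragraph{The missing idea.} Compare $L$ with the free operator
\[
\Lambda F(a,b)=4F(a,b)-F(a-1,b)-F(a+1,b)-F(a,b-1)-F(a,b+1)
\]
on all of $\Z^2$, and use a \emph{sum} ansatz
\[
F(a,b)=\alpha(a)+\beta(b),\qquad \alpha(t)=\cos\bigl(\theta(t-\tfrac12)\bigr),\qquad \beta(t)=\cos\bigl(\theta(t-N-\tfrac32)\bigr).
\]
\begin{itemize}
\item The cosine recursion gives $\Lambda F=(2-2\cos\theta)F$ identically.
\item On $1\le a<b\le N+1$, the difference $LF-\Lambda F$ consists only of three correction terms: $\delta_{a,1}\bigl(F(0,b)-F(1,b)\bigr)$, $\delta_{b,N+1}\bigl(F(a,N+2)-F(a,N+1)\bigr)$, and $\delta_{b,a+1}\bigl(F(a,a)+F(a+1,a+1)-F(a,a+1)\bigr)$.
\item The first two vanish because cosine is even, so $\alpha(0)=\alpha(1)$ and $\beta(N+2)=\beta(N+1)$.
\item The third equals $\alpha(a+1)+\beta(a)=2\cos\bigl(\theta(2a-N-1)/2\bigr)\cos\bigl(\theta(N+2)/2\bigr)$. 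It vanishes precisely because $\theta(N+2)=\pi$, and this is where $N+2$ enters.
\item Positivity follows from $F(a,b)=2\cos\bigl(\theta(a+b-N-2)/2\bigr)\cos\bigl(\theta(N+1+a-b)/2\bigr)$. For $1\le a<b\le N+1$ the two arguments lie in $(-\pi/2,\pi/2)$ and $(0,\pi/2)$ respectively.
\end{itemize}

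Your second candidate, a cosine in $a_1+a_2$ times a function of $a_2-a_1$, has exactly this factored shape. However, separating variables in $(a_1+a_2,\,a_2-a_1)$ does not handle the outer edges $a_1=1$ and $a_2=N+1$, which are diagonal lines in those coordinates. It is the sum form that makes all three boundary identities immediate. With this $F$ in hand, your concluding appeal to Lemma~\ref{lem:positive-eigenvector} finishes the proof exactly as in the paper.
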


\begin{proof}
The all zero resolution of $TU_N$ consists of $N+1$ circles. Number them
$1,\dots,N+1$ along the diagram so that crossing $i$ joins circle $i$ to
circle $i+1$. In unnormalized $q$ degree $3-N$, an enhanced state carries
the label $1$ on exactly two circles and the label $x$ on the remaining
$N-1$ circles. For $1\leq a<b\leq N+1$, let $e_{a,b}$ denote the state with
$1$ on circles $a$ and $b$. The vectors
\[
\{e_{a,b}\mid 1\leq a<b\leq N+1\}
\]
form an orthonormal basis of $\widetilde C^{0,3-N}(TU_N)$. We set
$e_{a',b'}=0$ whenever $a'<1$, $b'>N+1$, or $a'=b'$.

By \eqref{eq:Delta0-bar},
\[
\widetilde\Delta_0(TU_N)
=
\sum_{i=1}^N\mathfrak{m}_{i,i+1},
\]
where $\mathfrak{m}_{i,i+1}=m^\dagger m$ acts on the tensor factors
corresponding to circles $i$ and $i+1$ by
\[
|11\rangle\longmapsto |11\rangle,\qquad
|1x\rangle\longmapsto |1x\rangle+|x1\rangle,
\]
\[
|x1\rangle\longmapsto |1x\rangle+|x1\rangle,\qquad
|xx\rangle\longmapsto 0,
\]
and acts by the identity on the remaining factors.

On the state $e_{a,b}$, the summand associated to crossing $i$ contributes
nothing if neither adjacent circle carries a $1$. It contributes $e_{a,b}$
if both circles carry a $1$. If exactly one carries a $1$, it contributes
$e_{a,b}$ together with the state obtained by moving that label across
crossing $i$. Summing over the crossings gives
\begin{equation}\label{eq:tu-laplacian-explicit}
\widetilde\Delta_{0,3-N}(TU_N)e_{a,b}
=
d(a,b)e_{a,b}
+
e_{a-1,b}+e_{a+1,b}+e_{a,b-1}+e_{a,b+1},
\end{equation}
where $d(a,b)$ is the number of crossings adjacent to circle $a$ or circle
$b$.

Circle $a$ meets crossings $a-1$ and $a$, except that circle $1$ meets only
crossing $1$. Similarly, circle $b$ meets crossings $b-1$ and $b$, except
that circle $N+1$ meets only crossing $N$. When $b=a+1$, crossing $a$ is
counted only once. Therefore
\[
d(a,b)
=
4-\delta_{a,1}-\delta_{b,N+1}-\delta_{b,a+1}.
\]

The off diagonal terms in \eqref{eq:tu-laplacian-explicit} connect states
that differ by moving one label across one crossing. We call such a move a
\emph{hop}. Every hop changes the parity of $a+b$. Define
\[
\widetilde e_{a,b}:=(-1)^{a+b}e_{a,b}.
\]
In this basis,
\[
\widetilde\Delta_{0,3-N}(TU_N)\widetilde e_{a,b}
=
d(a,b)\widetilde e_{a,b}
-
\widetilde e_{a-1,b}
-
\widetilde e_{a+1,b}
-
\widetilde e_{a,b-1}
-
\widetilde e_{a,b+1}.
\]

Let $L$ denote the matrix of $\widetilde\Delta_{0,3-N}(TU_N)$ in the basis
$\widetilde e_{a,b}$. Its off diagonal entries are $-1$ or $0$, and its
support graph is the hop graph. This graph is connected: starting from a
pair $(a,b)$, one can first lower $a$ to $1$ and then lower $b$ to $2$.
Thus every pair is connected to $(1,2)$.

Identify
\[
\sum_{a<b}F(a,b)\widetilde e_{a,b}
\]
with the function $F$ on pairs $1\leq a<b\leq N+1$. Then
\begin{equation}\label{eq:L-on-functions}
\begin{aligned}
(LF)(a,b)
={}&
d(a,b)F(a,b)
-F(a-1,b)-F(a+1,b) \\
&-F(a,b-1)-F(a,b+1),
\end{aligned}
\end{equation}
where a term $F(a',b')$ is omitted whenever $e_{a',b'}=0$.

Away from the endpoint conditions $a=1$ and $b=N+1$ and the adjacency
condition $b=a+1$, the operator in \eqref{eq:L-on-functions} is the sum of
two one dimensional path operators. This suggests looking for a solution of
the form
\[
F(a,b)=\alpha(a)+\beta(b).
\]
The endpoint corrections will require
\[
\alpha(0)=\alpha(1),
\qquad
\beta(N+2)=\beta(N+1),
\]
while the adjacency correction will require
\[
\alpha(a+1)+\beta(a)=0.
\]

Set
\[
\theta=\frac{\pi}{N+2},
\qquad
\lambda=2-2\cos\theta,
\]
and define
\[
\alpha(t)=\cos\bigl(\theta(t-\tfrac12)\bigr),
\qquad
\beta(t)=\cos\bigl(\theta(t-N-\tfrac32)\bigr),
\qquad
F(a,b)=\alpha(a)+\beta(b).
\]
The cosine recursion gives
\begin{equation}\label{eq:cos-recursion}
\begin{aligned}
2\alpha(t)-\alpha(t-1)-\alpha(t+1)
&=\lambda\alpha(t),\\
2\beta(t)-\beta(t-1)-\beta(t+1)
&=\lambda\beta(t)
\end{aligned}
\end{equation}
for every $t\in\mathbb Z$. Since cosine is even,
\begin{equation}\label{eq:cos-endpoints}
\alpha(0)=\alpha(1)=\cos(\theta/2),
\qquad
\beta(N+2)=\beta(N+1)=\cos(\theta/2).
\end{equation}
Finally, the sum to product identity gives
\begin{equation}\label{eq:cos-adjacent}
\begin{aligned}
\alpha(t+1)+\beta(t)
&=
2\cos\left(\frac{\theta(2t-N-1)}{2}\right)
 \cos\left(\frac{\theta(N+2)}{2}\right)\\
&=0
\end{aligned}
\end{equation}
for every $t\in\mathbb Z$, since $\theta(N+2)/2=\pi/2$.

To verify the eigenvalue equation, introduce the free operator
\[
\begin{aligned}
\Lambda F(a,b)
:={}&
4F(a,b)
-F(a-1,b)-F(a+1,b)\\
&-F(a,b-1)-F(a,b+1).
\end{aligned}
\]
Equation \eqref{eq:cos-recursion} gives
\[
\Lambda F=\lambda F
\]
for every pair of integers. For $1\leq a<b\leq N+1$, comparison with
\eqref{eq:L-on-functions} gives
\[
\begin{aligned}
(LF)(a,b)-\Lambda F(a,b)
={}&
\delta_{a,1}\bigl(F(0,b)-F(1,b)\bigr)\\
&+
\delta_{b,N+1}\bigl(F(a,N+2)-F(a,N+1)\bigr)\\
&+
\delta_{b,a+1}
\bigl(F(a,a)+F(a+1,a+1)-F(a,a+1)\bigr).
\end{aligned}
\]
The first two terms vanish by \eqref{eq:cos-endpoints}. The remaining bracket
is
\[
F(a,a)+F(a+1,a+1)-F(a,a+1)
=
\alpha(a+1)+\beta(a),
\]
which vanishes by \eqref{eq:cos-adjacent}. Hence
\[
LF=\lambda F.
\]

The sum to product identity also gives
\[
F(a,b)
=
2\cos\left(\frac{\theta(a+b-N-2)}{2}\right)
 \cos\left(\frac{\theta(N+1+a-b)}{2}\right).
\]
For $1\leq a<b\leq N+1$,
\[
|a+b-N-2|\leq N-1,
\qquad
1\leq N+1+a-b\leq N.
\]
The two cosine arguments therefore lie in
\[
\left(-\frac{\pi}{2},\frac{\pi}{2}\right)
\qquad\text{and}\qquad
\left(0,\frac{\pi}{2}\right),
\]
respectively. Thus $F(a,b)>0$ for every $1\leq a<b\leq N+1$.

Lemma~\ref{lem:positive-eigenvector} now implies that
\[
\lambda
=
2-2\cos\left(\frac{\pi}{N+2}\right)
\]
is the smallest eigenvalue of $L$, and that it is simple. Since $\lambda>0$,
the Laplacian has no kernel in this bidegree, so $\lambda$ is its spectral
gap. In the original enhanced state basis, the corresponding eigenvector is
\[
\sum_{a<b}(-1)^{a+b}F(a,b)e_{a,b}.
\]
\end{proof}

\begin{remark}\label{rmk:tu-gap-graph}
The change of basis in the proof is the bipartite sign change associated to
the graph $G'_{N-1,2}(TU_N)$ from
Proposition~\ref{prop-TU-bipartite}. In this basis, $L$ is the ordinary graph
Laplacian of $G'_{N-1,2}(TU_N)$ together with a diagonal contribution of
$1$ on each state $e_{a,a+1}$. Under the conventions of
Section~\ref{ssec:twisted-unknot-graphs}, this contribution comes from the
weight $\tfrac12$ loop at each state with adjacent labels.

The functions $\alpha$ and $\beta$ are cosine solutions of the one
dimensional path recurrence. Equation \eqref{eq:cos-endpoints} gives the
endpoint conditions, while \eqref{eq:cos-adjacent} encodes the additional
condition at states with adjacent labels. The latter condition forces
$\theta(N+2)/2=\pi/2$ and accounts for the appearance of $N+2$ in the
spectral gap formula.
\end{remark}

\begin{corollary}\label{cor:tu-polynomial}
The spectral gap of \(TU_N\) in bidegree \((0,3-N)\) admits the asymptotic expansion 
\[
\operatorname{gap}\bigl(\widetilde\Delta_{0,3-N}(TU_N)\bigr)
=
\frac{\pi^2}{(N+2)^2}
-
\frac{\pi^4}{12(N+2)^4}
+
O(N^{-6}).
\]
 Numerical computations through $N\leq 14$ indicate that
this bidegree contains the global minimum gap. Thus, conditional on this
observed pattern, the formula gives the predicted $N^{-2}$ decay of the
global gap. It proves an inverse-polynomial bound in this
specific bidegree.
\end{corollary}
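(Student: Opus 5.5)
The statement to prove is the asymptotic expansion in Corollary~\ref{cor:tu-polynomial}. The remaining sentences of the corollary are numerical observations and commentary, not claims requiring proof. My plan is to start from the closed form in Theorem~\ref{thm:tu-gap},
\[
\operatorname{gap}\bigl(\widetilde\Delta_{0,3-N}(TU_N)\bigr)=2-2\cos\theta,\qquad \theta=\frac{\pi}{N+2},
\]
and expand the cosine in its Taylor series at $0$. Using $\cos\theta=1-\theta^2/2+\theta^4/24-\theta^6/720+\cdots$ gives
\[
2-2\cos\theta=\theta^2-\frac{\theta^4}{12}+\frac{\theta^6}{360}-\cdots .
\]

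Next I will bound the remainder. The series for $2-2\cos\theta$ is alternating, and for $0<\theta\le\pi/3$ (which holds since $N\ge1$) its terms decrease in absolute value. Hence the error after the $\theta^4$ term is at most $\theta^6/360$ in absolute value. Alternatively, Taylor's theorem with Lagrange remainder gives $|R|\le 2\theta^6/6!$ directly. Either way the remainder is $O(\theta^6)$.

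Substituting $\theta=\pi/(N+2)$ yields the displayed two terms. The error is $O((N+2)^{-6})=O(N^{-6})$, because $(N+2)^{-1}\le N^{-1}$.

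I will then add the consequences. The gap is $\Theta(N^{-2})$, since $4\sin^2(x)\ge 4(2x/\pi)^2$ for $x\in[0,\pi/2]$. This gives the explicit lower bound
\[
\operatorname{gap}\ge\frac{4}{(N+2)^2},
\]
which is the inverse-polynomial bound in this bidegree.

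There is no real obstacle here. The only care needed is making the remainder uniform in all $N\ge1$ rather than merely asymptotic, and the alternating-series or Lagrange bound handles that.
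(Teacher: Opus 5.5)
Your proposal is correct and takes the same route as the paper, whose proof is simply a Taylor expansion of the closed form $4\sin^2\!\bigl(\pi/(2(N+2))\bigr)=2-2\cos\bigl(\pi/(N+2)\bigr)$ from Theorem~\ref{thm:tu-gap} at zero. Your explicit remainder bound $\theta^6/360$ makes the $O(N^{-6})$ term uniform for all $N\geq 1$, and your Jordan-inequality bound $\operatorname{gap}\geq 4/(N+2)^2$ makes the inverse-polynomial claim explicit; the paper does not spell out either, and both are correct.
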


\begin{proof}
Taylor-expand $4\sin^2(\pi/(2(N+2)))$ at $\pi/(2(N+2)) = 0$.
\end{proof}
 
% - - - - - - - - - - - - - - - - - -
% - - - - - - - - - - - - - - - - - -
\subsection{Graphs for \texorpdfstring{$(2,N)$}{(2,N)}-torus knots}\label{ssec:two-n-torus-graphs}
% - - - - - - - - - - - - - - - - - -
Our analysis of twisted unknots $TU_N$ can be extended to $(2,N)$ torus knots.  These are knots that can be obtained as a different closure of the $N$-fold twist 
\[
T_{(2,N)}\;\; := \;\; 
\hackcenter{   \begin{tikzpicture}[scale=.6]
      \draw [ very thick]    (1,1) .. controls +(0,.35) and +(0,-.35) .. (2,2);
  \draw [ very thick]    (1,2) .. controls +(0,.35) and +(0,-.35) .. (2,3);
   \draw [ very thick]    (1,4) .. controls +(0,.35) and +(0,-.35) .. (2,5);
  \path [fill=white] (1.35,1) rectangle (1.65,5);
  \draw [ very thick]    (2,1) .. controls +(0,.35) and +(0,-.35) .. (1,2);
  \draw [ very thick]    (2,2) .. controls +(0,.35) and +(0,-.35) .. (1,3);
    \draw [ very thick]    (2,4) .. controls +(0,.35) and +(0,-.35) .. (1,5);
     \draw [ very thick]    (2,5) .. controls +(0,.5) and +(0,.5) .. (3,5) to (3,1);
     \draw [ very thick]    (2,1) .. controls +(0,-.5) and +(0,-.5) .. (3,1);
      \draw [ very thick]    (1,5) .. controls +(0,1.1) and +(0,1.1) .. (4,5) to (4,1);
     \draw [ very thick]    (1,1) .. controls +(0,-1.1) and +(0,-1.1) .. (4,1);
      \node at (1.5,3.75) {$\vdots$};
    \end{tikzpicture}  } 
    \qquad \qquad 
    \sigma^N \;\; := \;\; 
\hackcenter{   \begin{tikzpicture}[scale=.6]
      \draw [ very thick]    (1,1) .. controls +(0,.35) and +(0,-.35) .. (2,2);
  \draw [ very thick]    (1,2) .. controls +(0,.35) and +(0,-.35) .. (2,3);
   \draw [ very thick]    (1,4) .. controls +(0,.35) and +(0,-.35) .. (2,5);
  \path [fill=white] (1.35,1) rectangle (1.65,5);
  \draw [ very thick]    (2,1) .. controls +(0,.35) and +(0,-.35) .. (1,2);
  \draw [ very thick]    (2,2) .. controls +(0,.35) and +(0,-.35) .. (1,3);
    \draw [ very thick]    (2,4) .. controls +(0,.35) and +(0,-.35) .. (1,5); 
      \node at (1.5,3.75) {$\vdots$};
    \end{tikzpicture}  }
\]
The closure $T_{(2,N)}$ will form a knot when $N$ is odd and a two-strand link when $N$ is even.  For simplicity in our analysis, we consider the knot case when $N$ is odd.  

For $N$ odd, the all zero resolution of $T_{(2,N)}$ will have $N$ circles and $\widetilde\Delta_0$ has the decomposition
\begin{equation}
\widetilde\Delta_0(T_{(2,N)})  = \sum_{i=1}^{N-1} \mathfrak{m}_{i i+1} + \mathfrak{m}_{N1}
\end{equation}
In this case, it is clear that $\Upsilon$ contains a single odd cycle $(1,2)(2,3) \dots (N-1,N)(1,N)$, so that two edges must be removed from $G_{n,k}(T_{(2,N)})$ to make it bipartite.    

Lemma~\ref{lem:loop-count} implies that the sum of the weighted loops in $G_{n,k}(T_{(2,N)})$ are given by $\frac{N}{2}\binom{N-2}{k-2}$. Taking $S=V$, we have $e_{\min}(V)=2+\operatorname{loop}(V)$, since two unit-weight non-loop edges must be removed from $G'_{n,k}(T_{(2,N)})$ to make it bipartite.  Thus,
\begin{equation}
    \Psi_V = \frac{2+\operatorname{loop}(V)}{|V|} = \frac{2 + \frac{N}{2}\binom{N-2}{k-2} }{\binom{N}{k} }
\end{equation}
Theorem~\ref{thm:Desai} then implies that
\[
\lambda_{\min} \leq 4 \Psi_V. 
\]
In certain $q$-degrees one could use a variant of Theorem~\ref{thm:subsets} to establish a lower bound.  But again, it will be challenging to explicitly compute the minimal $\Psi_S$ in all $q$-degrees.

\subsubsection{The exact bidegree spectral gap of \texorpdfstring{$T_{(2,N)}$}{T(2,N)}}\label{sssec:torus-min-gap}
For odd $N \geq 3$, numerical evidence from~\cite{schmidhuber2025quantumalgorithmkhovanovhomology} indicates that the minimum spectral gap for the standard positive-braid diagram occurs in the normalized bidegree $(N-1,3N-2)$. In this $q$-degree, the relevant cochain spaces in homological degrees $N-1$ and $N$ each have dimension $N$. The following theorem computes the Laplacian in homological degree $N-1$ exactly.

\begin{thm}\label{thm:torus-spectral-gap}
For odd $N \geq 3$, the Khovanov Laplacian of the $(2,N)$-torus knot $T_{(2,N)}$ in bidegree $(N - 1,\; 3N - 2)$ has smallest eigenvalue
\[
2 - 2\cos\!\left(\frac{\pi}{N}\right)
\;=\;
4\sin^2\!\left(\frac{\pi}{2N}\right).
\]
Equivalently, since the Laplacian has no kernel in this bidegree, this is the bidegree spectral gap. The asymptotic decay is $\pi^2/N^2+O(N^{-4})$.
\end{thm}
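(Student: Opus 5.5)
The plan is to compute the Laplacian in this bidegree directly as an explicit $N\times N$ matrix. I expect it to be a signed cycle Laplacian whose sign product around the cycle is $-1$; its spectrum is then known in closed form.

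Since $T_{(2,N)}$ is a positive braid closure, $n_-=0$ and $n_+=N$. By \eqref{eq:grading-translation}, the bidegree $(N-1,3N-2)$ is cube degree $k=N-1$ at unnormalized $q$-degree $2N-2$, and by \eqref{eq:laplacian-shift} it suffices to study $\widetilde\Delta_{N-1,2N-2}$. The all-zero resolution has $N$ circles and the all-one resolution has $2$. A state with a single $0$ (at crossing $c$) has one circle, and the generators in the target $q$-degree are one per crossing; call them $f_c$. The dimension count $N$ quoted above is consistent with this.

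\textbf{Step 1: the outgoing term $\partial^\dagger\partial$.} Each $f_c$ maps into the top cube degree by a single edge, a split $\Delta_V$ into the two circles of the all-one resolution. In the relevant $q$-degree this lands in one fixed vector of $V\otimes V$ for every $c$, with cube sign $\epsilon_c=\pm1$ in Bar-Natan's conventions. Hence $\partial^\dagger\partial$ restricted here has rank one, with entries $\epsilon_c\epsilon_{c'}$ times a fixed constant. If the top-degree piece turns out to be $N$-dimensional, as the paper's count suggests, I would instead compute it as a diagonal matrix using the maximal-degree formulas for $mm^\dagger$ and $\Delta_V\Delta_V^\dagger$ recorded in the remark following Proposition~\ref{prop:upper}. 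I will determine which case holds while fixing the gradings, and carry whichever it is into Step~3.

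\textbf{Step 2: the incoming term $\partial\partial^\dagger$.} The states in cube degree $N-2$ have zeros at two crossings $c<c'$. The generators meeting our $q$-degree come only from resolutions in which those two zeros split the picture into two circles carrying a pair of labels. Each such generator maps by merges to exactly $f_c$ and $f_{c'}$, with signs from the cube. Only cyclically adjacent pairs $\{c,c+1\}$ (indices mod $N$) should contribute nonzero edges in this extreme $q$-degree, because for non-adjacent pairs the merge kills the label combination. Assembling the contributions, I expect
\[
\widetilde\Delta_{N-1,2N-2}=2I-\sum_{c\in\Z/N}\sigma_c\bigl(E_{c,c+1}+E_{c+1,c}\bigr),\qquad \sigma_c\in\{\pm1\},
\]
after the outgoing and incoming diagonal contributions are combined.

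\textbf{Step 3: signs and spectrum.} Conjugating by a diagonal $\pm1$ matrix, as in Proposition~\ref{prop:max-degree}, removes every sign except the holonomy $\prod_c\sigma_c$. The crux is to show that this holonomy equals $-1$. This is the algebraic counterpart of the single odd cycle $(1,2)(2,3)\cdots(N,1)$ in $\Upsilon$ noted above; I would check it from the parity of the cube signs around the closed chain of squares, using that $N$ is odd. Granting holonomy $-1$, the matrix is unitarily equivalent to the Laplacian of the antiperiodic cycle. Its eigenvectors are $\omega^{c}$ with $\omega^N=-1$, and the eigenvalues are
\[
2-2\cos\frac{(2m+1)\pi}{N},\qquad m=0,\dots,N-1.
\]
All of these are positive; for odd $N$, the value $m=(N-1)/2$ gives $4$. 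So there is no kernel, which matches $\mathrm{KH}$ vanishing in this bidegree, and the minimum is $2-2\cos(\pi/N)=4\sin^2(\pi/(2N))$. The asymptotic expansion then follows by Taylor expansion, as in Corollary~\ref{cor:tu-polynomial}.

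As a fallback, if Step~1 contributes a rank-one term, I would apply Lemma~\ref{lem:positive-eigenvector} after a suitable sign change, using the explicit positive trial vector $\cos(\pi(c-\tfrac{N+1}{2})/N)$, analogous to the proof of Theorem~\ref{thm:tu-gap}.

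The main obstacle is the careful bookkeeping of the cube signs and the grading shifts, which decides whether the holonomy is $-1$ and whether the diagonal is exactly $2$. I would first verify the matrix by hand for $N=3$, where the predicted minimum is $2-2\cos(\pi/3)=1$, before writing the general sign argument.
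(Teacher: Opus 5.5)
Your endpoint is right, but the derivation you outline would not produce it. For odd $N$, a signed cycle $2I-\sum_c\sigma_c(E_{c,c+1}+E_{c+1,c})$ with $\prod_c\sigma_c=-1$ is diagonally sign-equivalent to the signless Laplacian $2I+A(C_N)$. Your eigenvalues $2-2\cos((2m+1)\pi/N)$ coincide with the circulant values $2+2\cos(2\pi k/N)$ used in the paper.

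The gap is in how you identify the cochain groups. With $n_+=N$ and unnormalized $q=2N-2$, a one-zero state consisting of a single circle has $q=(N-1)\pm1\neq 2N-2$. So your $f_c$ do not lie in this bidegree. The circle count you used (all-zero resolution with $N$ circles, all-one with $2$) is incompatible with the stated normalized bidegree.

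For $C_{\mathrm{KH}}^{N-1,3N-2}$ to be nonzero, the all-one resolution must carry the $N$ circles, arranged cyclically with crossing $c$ between circles $c$ and $c+1$. Then $C_{\mathrm{KH}}^{N-1,3N-2}$ has basis $v_c$: a single $0$ at crossing $c$, with all $N-1$ circles labeled $1$. Similarly, $C_{\mathrm{KH}}^{N,3N-2}$ has basis $w_c$: the all-one resolution with a single $x$ on circle $c$. States in cube degree $N-2$ have $N-2$ circles and hence $q\leq 2N-4$, so $C_{\mathrm{KH}}^{N-2,3N-2}=0$. Consequently the incoming term $\partial\partial^\dagger$, which your Step~2 uses to build the cycle, vanishes identically.

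The whole matrix comes from the outgoing term, and neither branch of your Step~1 is correct: $\partial^\dagger\partial$ is neither rank one nor diagonal. Only $\Delta_V(1)=1\otimes x+x\otimes 1$ contributes, so $\partial v_c=\epsilon_c(w_c+w_{c+1})$. Hence $\partial^\dagger\partial$ is the Gram matrix with diagonal entries $2$ and entries $\epsilon_c\epsilon_{c+1}$ between cyclic neighbors. (The maximal-degree formulas you planned to use compute $\partial\partial^\dagger$ on $C_{\mathrm{KH}}^{N,3N-2}$, which is also $2I+A(C_N)$, not diagonal.) Conjugating by $\mathrm{diag}(\epsilon_c)$ removes every sign and gives $2I+A(C_N)$.

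So the step you call the crux, a parity argument for cube signs around a chain of squares, is not the mechanism. The cube signs cancel completely in a Gram matrix. Your holonomy $-1$ is simply $(-1)^N$, coming from $\sigma_c=-1$ for all $c$ on an odd cycle. Once the matrix is obtained this way, your Step~3 completes the argument, just as the paper's circulant eigenvalue computation does.
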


\begin{proof}
Present $T_{(2,N)}$ as the closure of the positive braid $\sigma_1^N$, and index crossings cyclically by $\Z/N\Z$. In the bidegree $(N-1,3N-2)$, the cochain group $C_{\mathrm{KH}}^{N-1,3N-2}(T_{(2,N)})$ has basis
\[
v_i,\qquad i\in \Z/N\Z,
\]
where $v_i$ is the enhanced state with the unique $0$-resolution at crossing $i$ and with all circles labeled by $1$. In the same $q$-degree, $C_{\mathrm{KH}}^{N,3N-2}(T_{(2,N)})$ has basis
\[
w_i,\qquad i\in \Z/N\Z,
\]
where the resolution is the all-one resolution and the unique $x$-label lies on the $i$th circle. There is no contribution from $C_{\mathrm{KH}}^{N-2,3N-2}$ to $C_{\mathrm{KH}}^{N-1,3N-2}$, so the Laplacian on $C_{\mathrm{KH}}^{N-1,3N-2}$ is $d^\dagger d$.

The differential from $v_i$ to the all-one resolution is the local comultiplication at the $i$th crossing. In this extreme $q$-degree, only
\[
\Delta_V(1)=1\otimes x+x\otimes 1
\]
contributes. Thus, for some cube sign $\epsilon_i\in\{\pm1\}$,
\begin{equation}\label{eq:torus-differential}
d(v_i)=\epsilon_i(w_i+w_{i+1}),
\end{equation}
with indices understood modulo $N$. It follows that the matrix of $d^\dagger d$ in the basis $\{v_i\}$ has diagonal entries $2$, off-diagonal entries $\epsilon_i\epsilon_{i+1}$ between cyclic neighbors $i$ and $i+1$, and all other entries zero.

Let $S$ be the diagonal sign matrix defined by $S(v_i)=\epsilon_i v_i$. Since the off-diagonal signs factor as $\epsilon_i\epsilon_{i+1}$, conjugation by $S$ removes all signs. Hence
\[
S(d^\dagger d)S^{-1}=Q_N:=2I+A(C_N),
\]
where $A(C_N)$ is the adjacency matrix of the $N$-cycle and $Q_N$ is its signless Laplacian. Therefore the spectrum of the bidegree Laplacian is the spectrum of $Q_N$. Since $Q_N$ is circulant, its eigenvalues are 
\[
\lambda_k=2+z_k+z_k^{-1}=2+2\cos\left(\frac{2\pi k}{N}\right),
\qquad k=0,1,\ldots,N-1.
\]
Since $N$ is odd, the minimum occurs at $k=(N\pm1)/2$ and equals
\[
2+2\cos\left(\pi-\frac{\pi}{N}\right)
=
2-2\cos\left(\frac{\pi}{N}\right)
=
4\sin^2\left(\frac{\pi}{2N}\right).
\]
There is no zero eigenvalue for odd $N$, so this minimum is the bidegree spectral gap. Taylor expansion of $4\sin^2(\pi/(2N))$ gives the stated asymptotic.
\end{proof}

\begin{remark}
Theorem~\ref{thm:torus-spectral-gap} is a specific bidegree computation. It proves inverse-polynomial decay in the normalized bidegree $(N-1,3N-2)$. Numerical computations through the checked range indicate that this bidegree contains the global minimum gap for the standard positive-braid diagrams of $T_{(2,N)}$, but we do not prove here that this is a global minimum.
\end{remark}

\subsection{Even twist knots}\label{ssec:even-twist-knots}

Combining the methods from Theorems \ref{thm:tu-gap} and \ref{thm:torus-spectral-gap}, we compute the spectral gap of \textit{twisted} knots $\mathrm{Tw}_N$ (see Definition \ref{def:twist-knots}) in bidegree $(N-2, 3N-9)$. Large-scale computations from~\cite{schmidhuber2025quantumalgorithmkhovanovhomology} suggest that the minimum gap over all bidegrees and all alternating knots is achieved in this particular bidegree for knots $\mathrm{Tw}_N$.

\begin{definition}\label{def:twist-knots}
    For even $N \in \mathbb{N}, N \geq 4$, define the twisted knot diagram $\mathrm{Tw}_N$ to be:
    \begin{align*}
\mathrm{Tw}_N \; := \;
\hackcenter{\begin{tikzpicture}[scale=.52]
      \draw [ very thick]    (1,1) .. controls +(0,.35) and +(0,-.35) .. (2,2);
  \draw [ very thick]    (1,2) .. controls +(0,.35) and +(0,-.35) .. (2,3);
   \draw [ very thick]    (1,4) .. controls +(0,.35) and +(0,-.35) .. (2,5);
  \path [fill=white] (1.35,1) rectangle (1.65,5);
  \draw [ very thick]    (2,1) .. controls +(0,.35) and +(0,-.35) .. (1,2);
  \draw [ very thick]    (2,2) .. controls +(0,.35) and +(0,-.35) .. (1,3);
    \draw [ very thick]    (2,4) .. controls +(0,.35) and +(0,-.35) .. (1,5);
     \draw [ very thick]    (2,5) .. controls +(0,.5) and +(0,.5) .. (3,5) to (3,3.5);
  \draw[very thick] (3,2.25) arc[start angle=180, end angle=90, radius=1];
    \path [fill=white] (3,3) rectangle (3.65,2.75);
    \draw[very thick] (3,3.5) arc[start angle=180, end angle=300, radius=1];
    \draw[very thick] (5,2.25) arc[start angle=0, end angle=90, radius=1];
     \draw [ very thick]    (2,1) .. controls +(0,-.5) and +(0,-.5) .. (3,1) -- (3,2.25);
      \draw [ very thick]    (1,5) .. controls +(0,1.4) and +(0,1.4) .. (5,5) to (5,3.1);
     \draw [ very thick]    (1,1) .. controls +(0,-1.4) and +(0,-1.4) .. (5,1) -- (5,2.25);
      \node at (1.5,3.75) {$\vdots$};
    \end{tikzpicture}}
    \end{align*}
    The diagram has $2$ crossings on the right and $N-2$ crossings on the left. 
\end{definition}

As the methods have been developed in detail in the proofs of Theorems \ref{thm:tu-gap} and \ref{thm:torus-spectral-gap}, we omit some of the details from the following proof.

\begin{thm} \label{thm:even-twist-knots}
    The spectral gap of $\mathrm{Tw}_N$ in bidegree $(N-2, 3N-9)$ is
    \begin{align*}
        2-2\cos\left(\frac{\pi}{N -1 }\right) = 4\sin^2\left( \frac{\pi}{2(N -1)}\right).
    \end{align*}
    Since the Laplacian has no kernel in this bidegree, the spectral gap and the smallest eigenvalue agree. Equivalently, the asymptotic expansion in the natural parameter $N-1$ is $\pi^2/(N-1)^2+O((N-1)^{-4})$.
\end{thm}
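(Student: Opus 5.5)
The plan is to reduce the bidegree $(N-2,3N-9)$ computation for $\mathrm{Tw}_N$ to a cyclic signless Laplacian, exactly as in Theorem~\ref{thm:torus-spectral-gap}. The expected answer $2-2\cos(\pi/(N-1))$ is the smallest eigenvalue of $Q_{N-1}=2I+A(C_{N-1})$ for the odd cycle $C_{N-1}$, since $N$ is even. This strongly suggests that the $N-2$ twist crossings together with one clasp crossing behave like the standard diagram of $T_{(2,N-1)}$, with the remaining clasp crossing frozen.

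\textbf{Step 1: identify the basis.} First fix orientations and compute $n_\pm(\mathrm{Tw}_N)$. Then use \eqref{eq:grading-translation} to convert $(N-2,3N-9)$ into a cube bidegree $(k,q)$. I expect $k$ to be one below or at the top of the cube (relative to the crossings that matter), and $q$ to be extremal for that cube degree. This would make the cochain group spanned by a small explicit family of enhanced states. The target is a list of $N-1$ states $v_i$, indexed by $\Z/(N-1)\Z$, each with a single ``defect'' resolution and all circles labeled by the extremal label. The one neighboring cochain group that can map in or out of this bidegree should likewise have a basis $w_i$ of $N-1$ states. I will also check that the other adjacent cochain group in the same $q$-degree is zero, so that the Laplacian reduces to a single term $d^\dagger d$ (or $dd^\dagger$), as in the torus case.

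\textbf{Step 2: compute the differential and conjugate away signs.} In the extremal $q$-degree only one term of $m$ or $\Delta_V$ survives, e.g.\ $\Delta_V(1)=1\otimes x+x\otimes 1$. This should give $d(v_i)=\epsilon_i(w_i+w_{i+1})$ cyclically, with cube signs $\epsilon_i$. The second clasp crossing either contributes nothing in this $q$-degree or adds a uniform diagonal term. As in Proposition~\ref{prop:max-degree}, conjugating by the diagonal sign matrix $S(v_i)=\epsilon_iv_i$ turns the matrix into $Q_{N-1}$.

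\textbf{Step 3: conclude.} The circulant eigenvalues of $Q_{N-1}$ are $2+2\cos(2\pi k/(N-1))$. Because $N-1$ is odd, these are all positive, so the Laplacian has no kernel in this bidegree. The minimum occurs at $k=(N-1\pm1)/2$ and equals $4\sin^2(\pi/(2(N-1)))$; a Taylor expansion gives the stated asymptotics.

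\textbf{Fallback.} Suppose instead that the boundary crossings produce a path with modified endpoint or diagonal weights rather than a clean cycle. Then I will use the method of Theorem~\ref{thm:tu-gap}. After the bipartite sign change the off-diagonal entries become nonpositive. I will write down a cosine eigenvector $\cos(\theta(t-c))$ with $\theta=\pi/(N-1)$, choosing $c$ so that the endpoint corrections vanish, check that it is strictly positive, and invoke Lemma~\ref{lem:positive-eigenvector}.

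The main obstacle is Step 1 together with the boundary part of Step 2. This means getting the orientation, the signs $n_\pm$, and the resolution combinatorics of the clasp right, so that exactly $N-1$ states appear. It also means verifying that the extra clasp crossing does not couple these states to additional ones or perturb the diagonal non-uniformly. I will settle this first by drawing the relevant resolutions for $N=4$ and $N=6$ and comparing the result with a direct matrix computation.
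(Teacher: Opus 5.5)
\textbf{The gap is in Step~2.} You assume the second clasp crossing ``contributes nothing in this $q$-degree or adds a uniform diagonal term,'' so that the matrix is exactly $Q_{N-1}=2I+A(C_{N-1})$. That assumption is false, and the extra term is exactly what the paper's proof has to handle.

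The paper works on the extremal $(N-1)$-circle resolution: the $N-3$ small circles of the twist region together with the two large circles obtained by unclasping. On it, one takes the states with exactly one label flipped from the extremal labeling. The adjacent cube degree in this $q$-degree has one state per crossing, so $N$ states rather than $N-1$. The differential between the two spaces is a signed incidence matrix of the circle-adjacency multigraph, which has $N-1$ vertices and one edge per crossing. Since the cube signs square away, the extremal Laplacian is the signless Laplacian of that multigraph.

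Your guess is right that the $N-2$ twist crossings and one clasp crossing form the odd cycle $C_{N-1}$. But the second clasp crossing is not frozen: it joins the same two large circles again. It therefore contributes the non-uniform rank-one term $uu^{\top}$ with $u=e_1+e_2$, and the matrix is $2I+A(C_{N-1})+uu^{\top}$. This matrix is not circulant, so Step~3 does not apply. A uniform shift $cI$ would in any case move the answer away from $4\sin^2(\pi/(2(N-1)))$ unless $c=0$.

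Your fallback does not rescue this. The support graph contains the odd cycle $C_{N-1}$, so no diagonal $\pm1$ conjugation makes all off-diagonal entries nonpositive, and Lemma~\ref{lem:positive-eigenvector} is unavailable.

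\textbf{The missing idea is how to treat the rank-one perturbation.}
\begin{itemize}
\item Lower bound: since $uu^{\top}\succeq 0$, Weyl monotonicity gives $\lambda_{\min}(2I+A(C_{N-1})+uu^{\top})\geq \lambda_{\min}(2I+A(C_{N-1}))=4\sin^2(\pi/(2(N-1)))>0$. This also shows there is no kernel.
\item Attainment: because $N-1$ is odd, the minimum of $2+2\cos(2\pi k/(N-1))$ occurs at both $k=(N-1\pm1)/2$. So the bottom eigenspace of the circulant is two-dimensional and contains a nonzero $x$ with $u^{\top}x=x_1+x_2=0$. Such an $x$ is killed by $uu^{\top}$, so it is an eigenvector of the full matrix with the same eigenvalue.
\end{itemize}

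\textbf{A caution about Step~1.} Converting $(N-2,3N-9)$ literally through $n_\pm$ will not lead you to this resolution once $N\geq 6$. The $(N-1)$-circle resolution uses the oriented (horizontal) smoothing at the antiparallel twist crossings. It is therefore the all-one resolution only in the chirality with $n_-=N-2$, where its normalized bidegree with one $x$ label is $(2,3)$. In the chirality with $n_-=2$, the all-one resolution has only $3$ circles. Thus $(2,3)$ agrees with $(N-2,3N-9)$ only for $N=4$, and your planned $N=6$ check will expose this. The paper's proof works directly with the $(N-1)$-circle resolution, and $2I+A(C_{N-1})+uu^{\top}$ is the matrix your argument should target.
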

\begin{proof}
    The normalized homological degree $N-2$ corresponds to the all-one resolution, which has $N-1$ circles and can be depicted as:
    \begin{align*}
        \hackcenter{\begin{tikzpicture}[scale=.52]
  %==== twist region: every crossing gets the 1-smoothing (horizontal caps) ====
  %     => a vertical stack of circles (one between each pair of crossings)
  % bottom cap : part of the LOWER big circle
  \draw[very thick] (1,1) .. controls +(0.25,0.4) and +(-0.25,0.4) .. (2,1);
  % circle at level 2
  \draw[very thick] (1,2) .. controls +(0.25,0.4)  and +(-0.25,0.4)  .. (2,2);
  \draw[very thick] (1,2) .. controls +(0.25,-0.4) and +(-0.25,-0.4) .. (2,2);
  % circle at level 3
  \draw[very thick] (1,3) .. controls +(0.25,0.4)  and +(-0.25,0.4)  .. (2,3);
  \draw[very thick] (1,3) .. controls +(0.25,-0.4) and +(-0.25,-0.4) .. (2,3);
  % the remaining circles of the twist region
  \node at (1.5,4) {$\vdots$};
  % circle at level 5
  \draw[very thick] (1,5) .. controls +(0.25,0.4)  and +(-0.25,0.4)  .. (2,5);
  \draw[very thick] (1,5) .. controls +(0.25,-0.4) and +(-0.25,-0.4) .. (2,5);
  % top cap : part of the UPPER big circle
  \draw[very thick] (1,6) .. controls +(0.25,-0.4) and +(-0.25,-0.4) .. (2,6);
  %==== outer arcs of the diagram (unchanged) ====
  \draw[very thick] (2,6) .. controls +(0,.6)   and +(0,.6)   .. (3,6) -- (3,3.5);   % a
  \draw[very thick] (2,1) .. controls +(0,-.5)  and +(0,-.5)  .. (3,1) -- (3,2.25);  % e
  \draw[very thick] (1,6) .. controls +(0,1.6)  and +(0,1.6)  .. (5,6) -- (5,3.1);   % f
  \draw[very thick] (1,1) .. controls +(0,-1.4) and +(0,-1.4) .. (5,1) -- (5,2.25);  % g
  %==== clasp: the 1-smoothing UNCLASPS it into two non-crossing arcs ====
  \draw[very thick] (3,3.5)  .. controls +(0.9,0.45) and +(-0.9,0.45) .. (5,3.1);   % upper circle
  \draw[very thick] (3,2.25) .. controls +(0.8,0.35) and +(-0.8,0.35) .. (5,2.25);  % lower circle
\end{tikzpicture}}
    \end{align*}
    The resolution of all $1$'s is extremal, therefore we only need to consider the down Laplacian $\partial \partial^\dagger$. To obtain the correct quantum degree, we have to label $N-2$ circles with $1$ and one with $x$. It is straightforward to check that all the maps coming into this resolution are multiplications.

    Using arguments analogous to those from the proofs of Theorems \ref{thm:tu-gap} and \ref{thm:torus-spectral-gap}, and using $S$ to denote the sign matrix, one can check that
    \begin{align*}
S(\partial\partial^\dagger)S^{-1}=2I+A(C_{N-1})+B.
    \end{align*}
    All the matrices are $N-1 \times N-1$ matrices, $B$ is a matrix with a $2 \times 2$ block of $1$'s in the top left corner (corresponding to the two larger circles on the picture) and zeros elsewhere. Writing $e_1,\ldots,e_{N-1}$ for the standard basis, the matrix $B$ is the rank-one positive-semidefinite matrix $B=u u^\top$, where $u=e_1+e_2$.

    We remark that $A(C_{N-1}) + B$ is the adjacency matrix of the $(N-1)$-cycle with one additional parallel edge joining two consecutive vertices, and $2I+A(C_{N-1})+B$ is the signless Laplacian of this multigraph. This is to be compared with $Q_N$ from Theorem \ref{thm:torus-spectral-gap}.

    Since $B$ is positive semidefinite, Weyl monotonicity gives 
        \begin{align*}
    \lambda_k\!\left(S(\partial\partial^\dagger)S^{-1}\right) \geq \lambda_k\!\left(2I+A(C_{N-1})\right),
    \end{align*}
    where the eigenvalues are ordered increasingly. In particular, $S(\partial\partial^\dagger)S^{-1}$ has no zero eigenvalue.

    By the earlier computations, we know that the spectral gap of $2I+A(C_{N-1})$ equals $ 4\sin^2\left( \frac{\pi}{2(N-1)}\right)$. Moreover, those computations show that the corresponding eigen-space $W$ is two dimensional. Hence, there is a nonzero vector $x=(x_1,x_2,\ldots,x_{N-1})\in W$ such that $u^\top x=x_1+x_2=0$. This shows that
    \begin{align*}
        S(\partial\partial^\dagger)S^{-1}x &= (2 I+A(C_{N-1}) + B)x \\
        &= (2 I+A(C_{N-1})) x + u u^\top x \\
        & = 4\sin^2\left( \frac{\pi}{2(N-1)}\right) x.
    \end{align*}
    The Weyl lower bound and this eigenvector together show that the smallest eigenvalue of $S(\partial\partial^\dagger)S^{-1}$ is exactly $4\sin^2\left( \frac{\pi}{2(N-1)}\right)$, completing the proof. 
    
\end{proof}

% ==========================================

\section{Combinatorial torsion}\label{sec:torsion-general}

In this section we develop the combinatorial-torsion machinery used in Section~\ref{sec:torsion-knots} to recover the integral torsion of Khovanov homology from the spectrum of the Khovanov Laplacian. The framework follows Reidemeister~\cite{Reidemeister1935}, Franz~\cite{Franz1935}, Turaev~\cite{Turaev2001}, and Mnev~\cite{mnev2014lecturenotestorsions}; we package the alternating product of pseudodeterminants of combinatorial Laplacians directly as the torsion invariant, in agreement with~\cite[Lemma 3.19]{mnev2014lecturenotestorsions}.

The main statement of this section can be summarized as follows. Given a finite-dimensional free cochain complex $C$ over $\Z$, equipped with the natural inner product on $C \otimes \C$, write $\tau^{\C}(C)$ for the alternating product of Laplacian pseudodeterminants and $\tau^{\Z}(C)$ for the alternating product of orders of the torsion subgroups of $\mathrm{H}^*(C;\Z)$. Theorem~\ref{tauZtauCV} below establishes:
\begin{itemize}
\item \emph{when the rational cohomology of $C$ vanishes,} the two quantities coincide, $\tau^{\Z}(C) = \tau^{\C}(C)$, exhibiting the spectrum as a direct readout of integral torsion;
\item \emph{when the rational cohomology is nontrivial,} they differ by a regulator $V(e,g)$ measuring the volume of an integral basis of $\mathrm{H}^*(C;\Z)$ in any orthonormal basis of $\mathrm{H}^*(C;\C)$, with $\tau^{\Z}(C) = \tau^{\C}(C) / V(e,g)$.
\end{itemize}

We adopt the convention throughout that cohomology is viewed as a subspace of the cochain spaces, with the induced inner product, and that bases of cohomology are formed by cochain vectors. Some of our references work over chain complexes, while other work over cochain complexes. The two are related by Poincaré duality, with corresponding sign adjustments in the exponents that we make explicit when relevant.

\begin{definition}\label{def nonzero det}
Let $M \in \mathrm{Mat}_n(\C)$ be a matrix. We write $\det\nolimits'(M)$ for the product of nonzero eigenvalues of $M$.
\end{definition}

\begin{definition}\label{combinatorialtorsion}
Let $C=(C^i,\partial_i)$ be a finite-dimensional cochain complex of $\C$-vector spaces, with each $C^i$ equipped with an inner product and cohomology identified with the harmonic subspace. Define the spectral $R$-torsion of $C$ by
\begin{equation}\label{eq:Rtor}
\tau^\C(C)
=
\prod_i \det\nolimits'(\Delta_i)^{(-1)^{i+1}i/2}
\in \C^*/\{\pm1\},
\end{equation}
where
\[
\Delta_i=\partial_i^\dagger\partial_i+\partial_{i-1}\partial_{i-1}^\dagger
\]
is the combinatorial Laplacian and $\partial_i^\dagger$ is the adjoint with respect to the chosen inner products. The fact that $\tau^\C(C)$ takes values in $\overline{\mathrm K_1}(\C)=\C^*/\{\pm1\}$ follows from~\cite{mnev2014lecturenotestorsions}.
\end{definition}
The classical definition of torsion (see \cite[Definition 3.3]{mnev2014lecturenotestorsions} or \cite{Turaev2001}) depends on the choice of bases. In principle we should write $\tau^\C(C;e,g)$ for a graded basis $e$ of $C$ and a graded basis $g$ of $\mathrm H^*(C)$. However, when both bases are orthonormal, Definition~\ref{combinatorialtorsion} agrees with $\tau^\C(C;e,g)$ from the classical definition and is independent of the particular orthonormal bases. This follows from~\cite[Lemmas 3.18--3.19]{mnev2014lecturenotestorsions}. In that reference the pseudodeterminant formula appears as a result rather than as a definition. We use it as the definition here in order to keep the discussion centered on combinatorial Laplacians.

The independence of the choice of basis boils down to the change of basis formula for $R$-torsion. The following is adapted from \cite[Definition 3.3]{mnev2014lecturenotestorsions}. 

\begin{definition}
Let $e=(e_i)_i$ and $f=(f_i)_i$ be two ordered bases of a finite-dimensional vector space over $\C$. Write
\[
e_i=\sum_j a_{ij}f_j,
\]
and denote the corresponding change-of-basis matrix by $B_{e,f}=(a_{ij})$.
\end{definition}

\begin{definition}\label{def:change basis in torsion}
Let $C=(C^i,\partial_i)$ be a finite-dimensional cochain complex of $\C$-vector spaces. Let $f^i$ be a basis of $C^i$ and $h^i$ a basis of $\mathrm H^i(C)$. Choose orthonormal bases $e^i$ of $C^i$ and $g^i$ of $\mathrm H^i(C)$. We define the $R$-torsion with respect to $f=(f^i)_i$ and $h=(h^i)_i$ by
\begin{equation}\label{eq:Rtorbasis}
\tau^\C(C;f,h)
=
\tau^\C(C)
\prod_i
\det(B_{e^i,f^i})^{(-1)^i}
\det(B_{g^i,h^i})^{(-1)^{i+1}}.
\end{equation}
Here $\tau^\C(C)$ is the spectral torsion from Definition~\ref{combinatorialtorsion}.
\end{definition}

Recall that the determinants of change of basis matrices between orthonormal bases are all equal to $\pm 1$ and $\tau^\C(C)$ takes values in $\C^*/\{\pm1\}$. It follows that, when $f$ and $h$ are orthonormal, we have $\tau^\C(C;f,h) = \tau^\C(C;e,g)$. Hence, the above definition really does not depend on the choice of the bases $e,g$. We will use the formula \eqref{eq:Rtorbasis} in what follows in order to adapt the results about $R$-torsion to Definition \ref{combinatorialtorsion} built on combinatorial Laplacians.

Our goal is to explain how the quantity in \eqref{eq:Rtor} extracts information relating to the integral torsion in $\mathrm{H}(C,\Z)$. We package the homological torsion into a single quantity in the next definition. Compare this with \cite[Section 4]{Turaev2001}.

\begin{definition}\label{deford}
    Let $C = (C^i, d_i)$ be a finite dimensional free cochain complex over $\Z$ with cohomology $\mathrm{H}(C) = \mathrm{H}(C, \Z)$. We define the integral ($R$-)torsion as 
\begin{equation}
            \tau^{\Z}(C)= \prod_{i} \left(\mathrm{ord}\left(\mathrm{Torsion}\left(\mathrm{H}^{i}(C) \right) \right) \right)^{(-1)^{i+1}}.
\end{equation}
Here $\mathrm{ord}(\mathrm{Torsion}(\mathrm{H}^{i}(C)))$ is the order (number of elements) of the torsion part of $\mathrm{H}^i(C)$, the $i$-th cohomology group of $C$. The order is to be understood in the group theoretic sense as the number of elements.
\end{definition}
When $\mathrm{H}(C, \Z)$ has no free summand, one could interpret the contribution from the free part of $\mathrm{H}(C, \Z)$ as a factor of $1$ in the expression for $\tau^{\Z}(C)$. Since we can infer all the information about the free part of $\mathrm{H}(C, \Z)$ from the cohomology over $\C$, we do not need to encode that information into $\tau^\Z$ itself.

In order to relate the integral $R$-torsion of some (finite dimensional, free) cochain complex $C$ over $\Z$ to the $R$-torsion of $C \otimes \C$ we need an auxiliary quantity, called the regulator or the volume associated to a basis of $C \otimes \C$ .

\begin{definition}\label{defvolume}
    Let $C = (C^j, \partial_j)_j$ over $\C$ be a cochain complex with inner product $\langle\_, \_\rangle$. We pick a basis $e = (e_{ij})_{ij}$ of $C$ such that $e_j = (e_{ij})_i$ is a basis for $C^j$ for each fixed $j$. For any orthonormal basis $f = (f_{ij})_{ij}$ of $C$ such that $f_j = (f_{ij})_i$ is basis for $C^j$ for each fixed $j$, we define the volume of $e$ as
    \begin{align*}
        \widetilde v_j(e, f) &= \left| \begin{pmatrix}
            \langle e_{1j}, f_{1j}  \rangle & \langle e_{1j}, f_{2j}  \rangle & \cdots & \langle e_{1j}, f_{mj}  \rangle \\
            \langle e_{2j}, f_{1j}  \rangle & \langle e_{2j}, f_{2j}  \rangle & \cdots & \langle e_{2j}, f_{mj}  \rangle \\
             \vdots & \vdots & \ddots & \vdots\\
             \langle e_{mj}, f_{1j}  \rangle & \langle e_{mj}, f_{2j}  \rangle & \cdots & \langle e_{mj}, f_{mj}  \rangle
        \end{pmatrix}\right|\\
        V(e,f) &= \prod_{j} \widetilde v_j(e,f)^{(-1)^{j+1}}
    \end{align*} 
\end{definition}
    Note that the above does not depend on the choice of orthonormal basis. While we could suppress it from the definition we leave it there for clarity. We could generalize the following theorem by allowing different bases and including change of bases terms. For clarity we omit this generalization.

\begin{thm}\label{tauZtauCV}
    Let $C = (C^j, \partial_j)_j$ be a finite dimensional free cochain complex over $\Z$. Assume that the canonical integral basis of $C$ is orthonormal in the inner product of $C\otimes \C$. The cohomology of $C$ over $\Z$ has an integral basis for the free part $e = (e_i)_i$. For any orthonormal basis $g = (g_i)_i$ of the cohomology of $C \otimes \C$,
    \begin{align*}
        \tau^\Z(C) &= \tau^\C(C)/V(e,g).
    \end{align*}
\end{thm}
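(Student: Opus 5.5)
Since $\tau^\C(C)$ is a spectral quantity and $\tau^\Z(C)$ is arithmetic, the plan is to compute the classical torsion $\tau^\C(C;f,h)$ with respect to the \emph{integral} basis $f$ of $C$ and an \emph{integral} basis $h=e$ of the free part of cohomology. We then compare it with $\tau^\C(C)$ through the change-of-basis formula \eqref{eq:Rtorbasis}. Because the canonical integral basis of $C$ is orthonormal by hypothesis, $\det(B_{e^i,f^i})=\pm1$, and only the cohomology factor survives. Thus
\[
\tau^\C(C;f,e)=\tau^\C(C)\prod_i\det(B_{g^i,e^i})^{(-1)^{i+1}} .
\]
The determinant $|\det B_{g^i,e^i}|$ is the reciprocal of $\widetilde v_i(e,g)$, since the Gram-type matrix in Definition~\ref{defvolume} is $B_{e,g}$ for $g$ orthonormal. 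So the product on the right is $V(e,g)^{-1}$ up to sign, giving $\tau^\C(C;f,e)=\tau^\C(C)/V(e,g)$. Here $\tau^\C(C)$ is identified with $\tau^\C(C;\text{o.n.},\text{o.n.})$ via Mnev's Lemmas 3.18--3.19, as recorded after Definition~\ref{combinatorialtorsion}.

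It then remains to prove the purely algebraic identity $\tau^\C(C;f,e)=\tau^\Z(C)$ in $\C^*/\{\pm1\}$, with the exponent convention of Definition~\ref{deford}. This is essentially the classical statement that the Reidemeister torsion of a based free $\Z$-complex, computed in an integral basis with integral cohomology bases, is the alternating product of the orders of the torsion subgroups (Turaev, Section 4, adapted to cochains).

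To establish this I would use the Smith normal form of each $\partial_i$ over $\Z$. Choose integral bases of $C^i$ adapted to the filtration $\ker\partial_i\supset$ (saturation of $\Im\partial_{i-1}$) $\supset\Im\partial_{i-1}$. In suitable unimodular bases each $C^i$ splits as a direct sum of pieces of three kinds:
\begin{itemize}
\item elementary complexes $\Z\xrightarrow{d}\Z$, which contribute the invariant factors;
\item free cohomology generators, which contribute the basis $e$;
\item complementary pieces on which $\partial$ is an isomorphism onto a primitive sublattice.
\end{itemize}
The unimodular change of basis changes the torsion only by $\pm1$, and torsion is multiplicative over direct sums. It therefore suffices to compute on these pieces. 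Each elementary $\Z\xrightarrow{d}\Z$ in degrees $(i,i+1)$ is rationally acyclic with torsion $|d|^{\pm1}$, and the product of such $d$ over degree $i$ is $\mathrm{ord}\,\mathrm{Torsion}(\mathrm{H}^{i+1}(C))$. The free cohomology summands, taken with the basis $e$, contribute $1$.

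The main obstacle is bookkeeping of exponent conventions. The sign $(-1)^{i+1}$ appears in \eqref{eq:Rtor} (with the weight $i/2$), in \eqref{eq:Rtorbasis}, and in Definitions~\ref{deford} and \ref{defvolume}. One must check that the elementary complex $\Z\xrightarrow{d}\Z$ in degrees $(i,i+1)$ gives exactly $d^{(-1)^{i+2}}$, matching the torsion of $\mathrm{H}^{i+1}$. I would verify this directly from the pseudodeterminant formula on the one-dimensional example: the Laplacians there have $\det'=d^2$ in both degrees, and the exponents combine to $d^{(-1)^{i}}$. Any mismatch would be corrected by the Poincaré-duality sign adjustment mentioned at the start of the section. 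The rationally acyclic case then follows immediately, since $V=1$ when cohomology vanishes.
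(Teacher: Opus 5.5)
Your route differs from the paper's, which gives no argument and simply cites L\^e~\cite{L2013}, Equation~(6). A self-contained Smith-normal-form proof would be a real improvement. As written, however, the two halves of your argument use incompatible normalizations, and they conflict exactly at the regulator. In the first half, $\tau^\C(C;f,e)$ is the quantity \emph{defined} by \eqref{eq:Rtorbasis}, so $\tau^\C(C;f,e)=\tau^\C(C)/V(e,g)$ just restates that definition. In the second half you evaluate the classical based torsion. Your check on $\Z\xrightarrow{d}\Z$ fixes the classical normalization that matches Definition~\ref{combinatorialtorsion}: $\tau(C;c,h)=\prod_k[\,b^k h^k\tilde b^{k+1}/c^k\,]^{(-1)^{k+1}}$, where $[u/w]$ is the determinant expressing the basis $u$ in terms of $w$. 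One checks in general that $\tau^\C(C)=\prod_k\det\nolimits'(\partial_k^\dagger\partial_k)^{(-1)^k/2}$ is this torsion in orthonormal bases. In integral bases your Smith-normal-form computation then correctly gives $\tau^\Z(C)$. But in this normalization, replacing the orthonormal cohomology basis $g^k$ by $e^k$ multiplies the torsion by $\det(B_{e^k,g^k})^{(-1)^{k+1}}=\pm\widetilde v_k(e,g)^{(-1)^{k+1}}$. That is the \emph{reciprocal} of the factor $\det(B_{g^k,e^k})^{(-1)^{k+1}}$ in \eqref{eq:Rtorbasis}. The chain-basis factor there is also inverted, though that is harmless here since it equals $\pm1$. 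Done consistently, your argument proves $\tau^\Z(C)=\tau^\C(C)\,V(e,g)$, i.e.\ $\tau^\C(C)=\tau^\Z(C)/V(e,g)$, and not the displayed identity.

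This is not a matter of presentation, as a two-term example shows. Take $\Z^2\xrightarrow{(1\;1)}\Z$ in degrees $n,n+1$, which is the essential summand of Example~\ref{exam:twisted}. Then $\det\nolimits'\Delta_n=\det\nolimits'\Delta_{n+1}=2$, so $\tau^\C(C)=2^{(-1)^n/2}$. The cohomology is $\Z(1,-1)$ in degree $n$, so $\tau^\Z(C)=1$, $\widetilde v_n(e,g)=\sqrt2$, and $V(e,g)=2^{(-1)^{n+1}/2}$. Hence $\tau^\C(C)/V(e,g)=2^{(-1)^n}\neq 1$, whereas $\tau^\C(C)\,V(e,g)=1$. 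Example~\ref{exam:twisted} appears consistent with the stated formula only because it evaluates $\tau^\C$ with exponent $(-1)^i i/2$ instead of the $(-1)^{i+1}i/2$ of Definition~\ref{combinatorialtorsion}. Your sanity check covered only acyclic elementary pieces, where $V$ never appears. The free-cohomology summands are exactly where the convention needed testing. No uniform ``Poincar\'e-duality sign adjustment'' can repair this: inverting $\tau^\C$, $\tau^\Z$ and $V$ together leaves the $V^2$ discrepancy unchanged. So this step cannot be deferred as bookkeeping. With the paper's definitions of $\tau^\C$, $\tau^\Z$ and $V$, the identity as stated fails whenever $V(e,g)\neq 1$, and your route correctly yields $\tau^\C(C)=\tau^\Z(C)/V(e,g)$. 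The rationally acyclic case $\tau^\Z(C)=\tau^\C(C)$ is unaffected.
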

\begin{proof}
    This is precisely the statement of \cite[Equation 6]{L2013}. 
\end{proof}

Torsion invariants are often used to study chain homotopy equivalences or quasi-isomorphisms; see \cite{CHUNG2009, mnev2014lecturenotestorsions}. To look at torsion of maps we need to know how to compute torsion of a short exact sequence, we may then use the cone construction to compute the torsion of a quasi-isomorphism.

\begin{lemma}\label{SEStorsion}
Let $A,B,C$ be finite dimensional free cochain complexes over $\Z$ or $\C$ that fit into a short exact sequence of complexes as shown.
            \begin{figure}[ht]
        \centering
        \begin{tikzcd}
        0 \arrow[r] & A \arrow[r, tail] & C \arrow[r, two heads] & B \arrow[r] & 0
        \end{tikzcd}
        \end{figure}
        Then 
        \begin{align*}
            \tau^\C(C) = \tau^\C(A)\tau^\C(B)\tau^\C(\chi),
        \end{align*} where $\chi$ is the long exact sequence of cohomologies induced by the displayed short exact sequence, see \cite[\href{https://stacks.math.columbia.edu/tag/0117}{Tag 0117}]{stacks-project}.
\end{lemma}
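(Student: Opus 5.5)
The plan is to reduce the statement to the classical multiplicativity of Reidemeister--Milnor torsion for based complexes. We pass between the spectral torsion of Definition~\ref{combinatorialtorsion} and the basis-dependent torsion $\tau^\C(\,\cdot\,;f,h)$ using the change-of-basis formula~\eqref{eq:Rtorbasis}. Concretely, I would first record the classical result (Milnor; Turaev~\cite{Turaev2001}, Theorem~1.5; Mnev~\cite{mnev2014lecturenotestorsions}). Suppose $0\to A\to C\to B\to 0$ is exact, and bases $a^i$ of $A^i$ and $b^i$ of $B^i$ are chosen, with $c^i$ the basis of $C^i$ formed from the image of $a^i$ together with arbitrary lifts of $b^i$. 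Suppose bases $h_A,h_B,h_C$ of the cohomologies are also chosen. Then, in $\C^*/\{\pm1\}$,
\[
\tau^\C(C;c,h_C)=\tau^\C(A;a,h_A)\,\tau^\C(B;b,h_B)\,\tau^\C(\chi;h),
\]
where $\chi$ is the long exact cohomology sequence, based by $h=(h_A,h_B,h_C)$. The standard proof checks that the torsion is independent of the choice of lifts. One then builds the $\beta$-bases (images of differentials plus lifts) compatibly in the three complexes and compares the resulting determinants via block-triangular matrices. I would cite this rather than reprove it, noting only that the sign conventions in the exponents match ours after the chain/cochain duality mentioned earlier in this section.

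Next I would specialize the bases so that every correction factor in~\eqref{eq:Rtorbasis} is $\pm1$. In the integral situation, take $a^i$ and $b^i$ to be the canonical integral bases. Then $c^i$ is an integral basis of $C^i$, because a short exact sequence of free $\Z$-modules splits. Hence $c^i$ differs from the canonical orthonormal integral basis of $C^i$ by a unimodular matrix, and $\det(B_{e^i,c^i})=\pm1$. Over $\C$, the natural hypothesis is that $A\to C$ is an isometric embedding and $B$ carries the quotient metric, identified with $A^\perp$. Then an orthonormal $a^i$, together with the orthogonal lifts of an orthonormal $b^i$, is orthonormal in $C^i$. Choosing $h_A,h_B,h_C$ orthonormal in the harmonic subspaces, every term $\det(B_{e^i,f^i})$ and $\det(B_{g^i,h^i})$ in~\eqref{eq:Rtorbasis} is $\pm1$. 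The basis-dependent torsions therefore collapse to the spectral ones $\tau^\C(A),\tau^\C(B),\tau^\C(C)$. Here $\tau^\C(\chi)$ is the torsion of the acyclic complex $\chi$ with those orthonormal cohomology bases, which is also its spectral torsion (its Laplacians have no kernel).

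The main obstacle is the compatibility of inner products: the lemma as stated does not specify how the metrics on $A$, $B$, $C$ are related. I would make explicit the hypothesis that the sequence is isometric (as in the integral case with canonical bases), and otherwise insert the correction factor $\prod_i\det(B_{e^i,c^i})^{(-1)^i}$. A secondary point is the interaction with the cohomology bases: $\tau^\C(\chi)$ must be computed in the same orthonormal harmonic bases used for $A$, $B$ and $C$. This is consistent because $\tau^\C(\chi;h)$ transforms by exactly the inverse factors $\det(B_{g,h})^{\pm1}$ that appear in~\eqref{eq:Rtorbasis}, so these factors cancel in the product. With these checks, the identity follows directly from the classical multiplicativity.
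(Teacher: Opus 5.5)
Your proposal is correct and takes the paper's route: the paper's entire proof is the citation \cite[Lemma 3.12]{mnev2014lecturenotestorsions} of multiplicativity for based complexes, and your added bookkeeping (unimodularity of the compatible basis of $C$ in the integral case, an isometric splitting over $\C$, and orthonormal harmonic bases for $\chi$) makes explicit the basis-compatibility hypothesis that this one-line citation leaves implicit. The only slip is minor and is shared with the paper: over $\C$, orthonormal bases differ by unitary matrices whose determinants have modulus one but need not be $\pm1$, which is harmless because the spectral torsions are positive reals and you may take absolute values.
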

\begin{proof}
    This is \cite[lemma 3.12]{mnev2014lecturenotestorsions}.
\end{proof}

\begin{definition}\label{deftorsionofmap}
    Let $A,B$ be finite dimensional free cochain complexes over $\Z$ or $\C$. Suppose $f:A\to B$ is a quasi-isomorphism. Then we define
    \begin{align*}
        \tau(f) &= \tau(\mathrm{cone}(f)),
    \end{align*}
    where $\mathrm{cone}(f)$ is the mapping cone, see  \cite[\href{https://stacks.math.columbia.edu/tag/014D}{Tag 014D}]{stacks-project}. We understand the torsion of quasi-isomorphism to be computable using Lemma \ref{SEStorsion}. 
\end{definition}

\section{Torsion of Khovanov homology}\label{sec:torsion-knots}

We now specialize the combinatorial-torsion framework of Section~\ref{sec:torsion-general} to the Khovanov complex of a knot diagram. The main result is Theorem~\ref{thm:integral-torsion}. In $q$-degrees where the rational Khovanov cohomology vanishes, the higher Khovanov spectrum recovers the order of the integral torsion subgroup of $\mathrm{KH}^*(K;\Z)$ exactly. In $q$-degrees where the rational cohomology is nonzero, the two quantities differ by an explicit regulator computed from the integral and orthonormal bases of $\mathrm{KH}^*(K;\C)$. This proves a conjecture left implicit in the work of Ortiz-Navarro~\cite[Section 5]{ORTIZNAVARRO2012}, see Remark \ref{rmk:ortiz_navarro_conj}. 

The section concludes with two Examples \ref{eg:trefoil torsion} and \ref{exam:twisted} which illustrate the two cases. The trefoil $3_1$ in $q$-degree $-7$ has trivial rational Khovanov cohomology, and the spectrum directly detects the $\Z/2$ torsion in $\mathrm{KH}^*(3_1;\Z)$ via $\tau^{\C} = \tau^{\Z} = 1/2$. The once-twisted unknot formally shifted to homological degree $n$ has nontrivial rational cohomology, and the regulator contributes a factor of $\sqrt{2}^{(-1)^{n+1}}$ in homological degree $n$.

Khovanov homology is well defined over $\Z$. For a diagram $D$ of $K$, we move freely between $C_{\mathrm{KH}}(D;\Z)$, $\mathrm{KH}(K;\Z)$, and their complexifications $C_{\mathrm{KH}}(D;\C)=C_{\mathrm{KH}}(D;\Z)\otimes\C$ and $\mathrm{KH}(K;\C)$ as convenient.

\begin{definition}\label{khovanovtorsion}
Let $D$ be a diagram of a knot $K$. For each normalized $q$-degree $j$, define the diagram-dependent spectral torsion and the integral Khovanov torsion by
\begin{align}
\tau_j^\C(D)&:=\tau^\C\bigl(C_{\mathrm{KH}}^{*,j}(D;\C)\bigr),\\
\tau_j^\Z(K)&:=\tau^\Z\bigl(C_{\mathrm{KH}}^{*,j}(D;\Z)\bigr).
\end{align}
For the total complex, set
\begin{align}
\tau^\C(D)&:=\tau^\C\bigl(C_{\mathrm{KH}}(D;\C)\bigr),\\
\tau^\Z(K)&:=\tau^\Z\bigl(C_{\mathrm{KH}}(D;\Z)\bigr).
\end{align}
The integral quantities are link invariants. The spectral quantities depend in general on the chosen diagram and on the orthonormal enhanced-state basis. We use the same distinction for the basis-dependent torsions $\tau^\C(D;e,f)$ and $\tau_j^\C(D;e,f)$.
\end{definition}

\begin{lemma}
    The Definition \ref{khovanovtorsion} agrees with the definition of Khovanov volume form of \cite{ORTIZNAVARRO2012} evaluated in orthonormal bases for the chain complex and its homology. The particular choice of the orthonormal bases does not matter by Definition \ref{combinatorialtorsion} and the discussion following that definition.
\end{lemma}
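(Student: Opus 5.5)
The plan is to unwind both definitions and compare them term by term. Ortiz-Navarro's Khovanov volume form is, in each $q$-degree $j$, the classical Reidemeister--Franz--Turaev torsion of the based complex $C_{\mathrm{KH}}^{*,j}(D;\C)$. It is formed from a choice of bases $e^i$ of the cochain groups and bases $h^i$ of the cohomology groups, via the alternating product of determinants of the transition matrices $[\,b^i \tilde h^i b^{i-1}/e^i\,]$, where $b^i$ lifts a basis of the boundaries. We therefore only need to check that, for orthonormal $e$ and $h$, this classical torsion coincides with the spectral torsion $\tau_j^\C(D)$ of Definition~\ref{khovanovtorsion}, and that it is independent of which orthonormal bases are chosen.

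The first step is to take $e^i$ to be the enhanced-state basis of $C_{\mathrm{KH}}^{i,j}(D;\C)$, which is orthonormal for the Hermitian product of \S\ref{ssec:inner-product}. For $h^i$ we take any orthonormal basis of the harmonic space $\mathcal H^i=\ker\Delta_{i,j}^{\mathrm{KH}}(D)$, identified with $\mathrm{KH}^{i,j}(K;\C)$ by Theorem~\ref{thm_Hodge}. We then use the Hodge decomposition
\[
C^i=\mathcal H^i\oplus\Im(\partial_{i-1})\oplus\Im(\partial_i^\dagger).
\]
With it we choose the lifts and boundary bases adapted to this orthogonal splitting, namely orthonormal eigenbases of $\partial_i^\dagger\partial_i$ on $\Im(\partial_i^\dagger)$ together with their images under $\partial_i$.

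With these choices the transition matrices become block diagonal. Their determinants are, up to a unit, products of the singular values of $\partial_i$ restricted to $\Im(\partial_i^\dagger)$. The squares of these singular values are exactly the nonzero eigenvalues of $\partial_i^\dagger\partial_i$, that is, $\det'(\partial_i^\dagger\partial_i)$. By Remark~\ref{rmk:eigenvalue-inheritance}, $\det'(\Delta_i)=\det'(\partial_i^\dagger\partial_i)\det'(\partial_{i-1}\partial_{i-1}^\dagger)$. Telescoping the alternating product then converts $\prod_i\det'(\partial_i^\dagger\partial_i)^{(-1)^{i}/2}$ (with the appropriate sign convention) into $\prod_i\det'(\Delta_i)^{(-1)^{i+1}i/2}$. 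This is precisely the content of \cite[Lemmas 3.18--3.19]{mnev2014lecturenotestorsions}, which I would cite rather than redo.

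Independence of the orthonormal choices follows from the change-of-basis formula~\eqref{eq:Rtorbasis}. Transition matrices between orthonormal bases have determinants of modulus one, which are trivial in $\C^*/\{\pm1\}$ up to the usual phase and absolute-value convention. By the discussion following Definition~\ref{combinatorialtorsion}, the result does not depend on which orthonormal bases are chosen.

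The main obstacle I expect is bookkeeping of conventions rather than mathematics. Ortiz-Navarro works with chain-complex and homological-degree conventions and a particular sign/exponent normalization, which must be matched to our cochain exponent $(-1)^{i+1}i/2$. This requires a Poincar\'e-type regrading, and possibly inverting the torsion, as flagged in \S\ref{sec:torsion-general}. I would first check that the definitions agree on a one-differential complex $\C\xrightarrow{\lambda}\C$ to fix the exponent convention, and then invoke multiplicativity to handle general complexes.
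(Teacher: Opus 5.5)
Your proposal is correct and takes essentially the same route as the paper, which simply notes that Ortiz-Navarro's basis of $V^{\otimes n}$ is orthonormal for the Hermitian product and then cites \cite[Lemmas 3.18--3.19]{mnev2014lecturenotestorsions}. Your Hodge-decomposition bookkeeping, with $\det'(\Delta_i)=\det'(\partial_i^\dagger\partial_i)\det'(\partial_{i-1}\partial_{i-1}^\dagger)$ telescoping to $\prod_i\det'(\partial_i^\dagger\partial_i)^{(-1)^i/2}$, just unpacks what those lemmas prove, and your caveat is a fair refinement: over $\C$, unitary change-of-basis determinants are only of modulus one, not literally $\pm1$ as the paper asserts, and this is handled by taking absolute values or by using real orthonormal bases.
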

\begin{proof}
    The basis of $V^{\otimes n}$ in \cite[Section 3.2]{ORTIZNAVARRO2012} is orthonormal with respect to our inner product on $V$. Therefore the result follows from \cite[Lemma 3.18, Lemma 3.19]{mnev2014lecturenotestorsions}.
\end{proof}
The integral basis for Khovanov cochain complexes that we are referring to is the basis where $1 < x$, extended to tensor products using the reverse lexicographical ordering. Clearly this basis is orthonormal in our chosen inner product. For the basis of integral Khovanov homology we use the integral basis obtained using the Smith normal form of the differentials. Since any integral invertible matrix must have determinant $\pm 1$, it follows from Definition \ref{def:change basis in torsion} that a particular choice of integral basis does not change the content of the next theorem. These are the bases described in \cite[Section 3.2, Section 4.4.]{ORTIZNAVARRO2012} and we refer the reader there to see the bases spelled out explicitly. 

\begin{thm}\label{thm:integral-torsion}
    Denote the basis of $C_{\mathrm{KH}}$ from \cite[Section 3.2]{ORTIZNAVARRO2012} as $e = (e_i)_i$ and the (induced) basis of cohomology from \cite[Section 4.4.]{ORTIZNAVARRO2012} as $f = (f_j)_j$. Then 
    \begin{align*}
        \tau^\C(D;e,f) &= \tau^\Z(K), \\
        \tau_j^\C(D;e,f) &= \tau_j^\Z(K).
    \end{align*}
\end{thm}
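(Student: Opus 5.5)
We will deduce Theorem~\ref{thm:integral-torsion} from Theorem~\ref{tauZtauCV} and the change-of-basis formula of Definition~\ref{def:change basis in torsion}. The plan is to show that the correction factor in that formula, computed for the integral bases $e$ and $f$, is exactly the regulator $V(f,g)$.

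First, we apply Theorem~\ref{tauZtauCV} to each $q$-graded summand $C=C_{\mathrm{KH}}^{*,j}(D;\Z)$; the argument for the total complex is identical, or follows by taking the product over $j$. The hypotheses hold because the enhanced-state basis $e$ is integral and orthonormal for the Hermitian product \eqref{eq:hermitian-pairing}, and the Smith normal form basis $f$ gives an integral basis of the free part of cohomology. This yields
\[
\tau_j^\Z(K)=\tau_j^\C(D)/V(f,g)
\]
for any orthonormal basis $g$ of $\mathrm{KH}^{*,j}(K;\C)$, viewed as the harmonic subspace.

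Next, we expand $\tau_j^\C(D;e,f)$ using \eqref{eq:Rtorbasis}, taking the orthonormal bases there to be $e$ itself and $g$. The chain factor $\det(B_{e^i,e^i})=1$ then disappears. What remains is
\[
\tau_j^\C(D;e,f)=\tau_j^\C(D)\prod_i\det(B_{g^i,f^i})^{(-1)^{i+1}}.
\]

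We then identify $|\det B_{g^i,f^i}|$ with $\widetilde v_i(f,g)^{-1}$ from Definition~\ref{defvolume}. Writing $f_k=\sum_l\langle f_k,g_l\rangle g_l$ (using orthonormality of $g$, after projecting the cocycle $f_k$ to its harmonic representative) gives $B_{f,g}=(\langle f_k,g_l\rangle)$. Hence $B_{g,f}=B_{f,g}^{-1}$ has determinant of modulus $\widetilde v_i(f,g)^{-1}$. Substituting, the correction factor becomes $V(f,g)^{-1}$, so $\tau_j^\C(D;e,f)=\tau_j^\C(D)/V(f,g)=\tau_j^\Z(K)$.

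The main obstacle is bookkeeping rather than substance. Three points need care.
\begin{itemize}
\item The exponent sign conventions of Definition~\ref{def:change basis in torsion}, Definition~\ref{defvolume} and \cite{L2013} must be made consistent. We will check this on the trefoil example in $q$-degree $-7$, where no regulator appears.
\item Moduli versus signs: everything is valued in $\C^*/\{\pm1\}$, and the determinants are real up to phase once $g$ is chosen real.
\item Cohomology classes must be correctly identified with harmonic cochains via Theorem~\ref{thm_Hodge}, so that the inner products $\langle f_k,g_l\rangle$ are computed on harmonic projections; these are the ones that define $V$ in \cite{L2013}.
\end{itemize}
Finally, independence from the choice of Smith-normal-form basis follows because integral change-of-basis matrices are unimodular.
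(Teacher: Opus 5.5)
Your argument is the paper's own: its proof is just ``immediate from Theorem~\ref{tauZtauCV}'', and you supply the step it omits, namely that the cohomology factor in \eqref{eq:Rtorbasis} has modulus $V(f,g)^{-1}$. Your observation that $\langle f_k,g_l\rangle$ only sees the harmonic projection of $f_k$ correctly handles non-harmonic Smith-normal-form cocycles. Taken formally, \eqref{eq:Rtorbasis} and Theorem~\ref{tauZtauCV} do combine as you say.

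The sign issue you set aside as bookkeeping is real, and the trefoil check cannot detect it. In $q$-degree $-7$ there is no rational cohomology, so there is no regulator and no cohomology-basis factor, and Definitions~\ref{combinatorialtorsion} and~\ref{deford} agree there. Run the check on Example~\ref{exam:twisted} instead. With the exponent $(-1)^{i+1}i/2$ of Definition~\ref{combinatorialtorsion} one gets $\tau^{\C}=\sqrt{2}^{(-1)^n}$; the example silently uses $(-1)^i i/2$ and obtains the reciprocal. Meanwhile $V=\sqrt{2}^{(-1)^{n+1}}$ and $\tau^{\Z}=1$.

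So under the paper's definitions the identity that actually holds is $\tau^{\Z}=\tau^{\C}V$, and Theorem~\ref{tauZtauCV} as stated is false. If \eqref{eq:Rtorbasis} is taken literally as the definition, then $\tau^{\C}(D;e,f)=\sqrt{2}^{(-1)^n}\cdot\bigl(2^{-1/2}\bigr)^{(-1)^{n+1}}=2^{(-1)^n}\neq 1=\tau^{\Z}$. The discrepancy already appears in the single $q$-degree where the complex is $\langle 1x,x1\rangle\xrightarrow{m}\langle x\rangle$.

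Your chain of equalities looks consistent only because \eqref{eq:Rtorbasis} and Theorem~\ref{tauZtauCV} are both inverted in the same way relative to Definitions~\ref{combinatorialtorsion}, \ref{deford} and~\ref{defvolume}. The paper's one-line proof inherits exactly this defect. To make the argument sound, fix one convention throughout. For instance, keep those three definitions and replace the cohomology factor in \eqref{eq:Rtorbasis} by $\det(B_{h^i,g^i})^{(-1)^{i+1}}$. The chain factor must be inverted likewise, though it is trivial here since $e$ is orthonormal. Then use $\tau^{\Z}=\tau^{\C}V$ in place of Theorem~\ref{tauZtauCV}. Your computation then runs verbatim with $|\det B_{f^i,g^i}|=\widetilde v_i(f,g)$ and gives $\tau^{\C}(D;e,f)=\tau^{\C}V(f,g)=\tau^{\Z}$.
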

\begin{proof}
    This is immediate from the cited reference and Theorem \ref{tauZtauCV}. The second equality follows from the first as the Khovanov cochain complex splits as a direct sum over the $q$-degrees $j$.
\end{proof}

This theorem proves the conjecture that was left implicit in \cite[Section 5]{ORTIZNAVARRO2012}.

\begin{cor}
    The torsions $\tau_j^\C(D;e,f)$ and $\tau^\C(D;e,f)$ evaluated in the canonical integral bases described above are knot invariants. In particular, they do not depend on the choice of a knot diagram.
\end{cor}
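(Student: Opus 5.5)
The corollary should follow from Theorem~\ref{thm:integral-torsion} combined with the invariance of integral Khovanov homology under Reidemeister moves. I would first record that theorem's identifications $\tau^\C(D;e,f)=\tau^\Z(K)$ and $\tau_j^\C(D;e,f)=\tau_j^\Z(K)$. Their right-hand sides are built only from $\mathrm{KH}^{*,j}(K;\Z)$: by Definition~\ref{deford}, $\tau^\Z$ is the alternating product of the orders of the torsion subgroups of the integral cohomology groups. If $D$ and $D'$ are two diagrams of the same knot $K$, Khovanov's theorem gives bigraded isomorphisms $\mathrm{KH}^{i,j}(D;\Z)\cong\mathrm{KH}^{i,j}(D';\Z)$. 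These isomorphisms preserve torsion subgroups and their orders, so $\tau_j^\Z$ computed from $D$ equals $\tau_j^\Z$ computed from $D'$. Taking the product over $j$ gives the same conclusion for the total torsion, since the integral complex splits as a direct sum over $q$-degrees and both $\tau^\Z$ and $\tau^\C$ are multiplicative over direct sums.

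Next I would check that the left-hand side really is evaluated in the canonical bases, so that no hidden diagram dependence remains. The basis $e$ is the enhanced-state basis. It is orthonormal, so by Definition~\ref{def:change basis in torsion} the factor $\det(B_{e^i,e^i_{\mathrm{orth}}})$ is $\pm1$. The cohomology basis $f$ is an integral basis from the Smith normal form. Any two such bases differ by a matrix in $GL_n(\Z)$, whose determinant is $\pm1$. Since $\tau^\C$ takes values in $\C^*/\{\pm1\}$, neither the choice of integral basis nor the ordering conventions affect the value. So for each diagram the quantity $\tau_j^\C(D;e,f)$ is well defined and equals $\tau_j^\Z(K)$, which depends only on $K$.

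The main subtlety, though not a real obstacle, is the torsion-free part of cohomology. One might worry that $f$ depends on $D$, because its free part lives in different cochain spaces for different diagrams. Theorem~\ref{thm:integral-torsion} has already absorbed the regulator $V(e,f)$ from Theorem~\ref{tauZtauCV} into the basis correction. Equivalently, $\tau^\C(D;e,f)=\tau^\C(D)/V(e,f)$, with $f$ the integral cohomology basis, and this equals $\tau^\Z$. So the diagram-dependent quantities $\tau^\C(D)$ and $V(e,f)$ can both change under a Reidemeister move, but their quotient cannot. I would state this explicitly and then conclude invariance.
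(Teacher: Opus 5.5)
Your proposal is correct and takes the same route as the paper, which gets the corollary directly from Theorem~\ref{thm:integral-torsion}, the link invariance of integral Khovanov cohomology, and the fact that $\tau^\Z$ depends only on the orders of the torsion subgroups. One notational slip: the regulator in your last paragraph should be $V(f,g)$, the volume of the integral cohomology basis $f$ measured against an orthonormal harmonic basis $g$, not $V(e,f)$ with $e$ the orthonormal chain basis.
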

\begin{proof}
    This follows from the previous theorem, the fact that Khovanov cohomology is a knot invariant, and from the definition of integral $R$-torsions. This was previously proven in \cite{ORTIZNAVARRO2012}.
\end{proof}

The distinction between integral and orthonormal bases is essential. A Smith normal form computation of the integer differentials produces a basis for the saturated integral cohomology lattice, allowing one to evaluate its volume against an orthonormal harmonic basis. The same computation also determines the integral Khovanov homology itself. Thus this procedure is not an algorithmic shortcut for computing integral torsion. 

\begin{cor}\label{cor:khov-vol-snf}
Let \(e=(e_i)_i\) denote the integral basis of
\(C_{\mathrm{KH}}(D;\mathbb Z)\) from~\cite[Section 3.2]{ORTIZNAVARRO2012},
viewed as an orthonormal basis after extending scalars to \(\mathbb C\).
Let \(f\) be an orthonormal basis of
\(\mathrm{KH}^*(K;\mathbb C)\). Let \(g\) be a basis for the free part of the
saturated integral lattice
\[
\mathrm{KH}^*(K;\mathbb Z)/\operatorname{Tors}
\hookrightarrow
\mathrm{KH}^*(K;\mathbb Q),
\]
computed from the integer differentials of
\(C_{\mathrm{KH}}(D;\mathbb Z)\) via Smith normal form. Then
\[
\tau^\C(D)
=
\tau^\Z(K)\,V(g,f),
\]
and, in each \(q\)-degree \(j\),
\[
\tau_j^\C(D)
=
\tau_j^\Z(K)\,V_j(g,f),
\]
where \(V_j(g,f)\) denotes the volume of the \(j\)-th \(q\)-graded part of
the integral cohomology lattice measured against the orthonormal basis \(f\).

Equivalently, the basis-dependent torsion satisfies
\[
\tau^\C(D;e,g)=\tau^\Z(K),
\qquad
\tau_j^\C(D;e,g)=\tau_j^\Z(K),
\]
and the displayed formulas above are obtained by changing the cohomology
basis from \(g\) to the orthonormal basis \(f\).
\end{cor}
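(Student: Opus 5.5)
The plan is to deduce Corollary~\ref{cor:khov-vol-snf} from Theorem~\ref{tauZtauCV}, applied degree by degree, together with the change-of-basis formula of Definition~\ref{def:change basis in torsion}. First, fix a normalized $q$-degree $j$ and apply Theorem~\ref{tauZtauCV} to the free finite complex $C=C_{\mathrm{KH}}^{*,j}(D;\Z)$. Its hypothesis holds because the enhanced-state basis $e$ (tensor products of $1$ and $x$) is orthonormal for the Hermitian product~\eqref{eq:hermitian-pairing}. The Smith normal form of the integer differentials $\partial_i$ produces, in each homological degree $i$, integral cocycles whose classes form a $\Z$-basis $g^i$ of $\mathrm{KH}^{i,j}(K;\Z)/\operatorname{Tors}$. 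This family is exactly the saturated lattice basis $g$ in the statement. Any two such bases differ by matrices in $\mathrm{GL}(\Z)$ of determinant $\pm1$, so $V_j(g,f)$ and $\tau^\C(\,\cdot\,;e,g)$ are well defined in $\C^*/\{\pm1\}$. Theorem~\ref{tauZtauCV} with orthonormal $f$ then gives $\tau^\Z_j(K)=\tau^\C_j(D)/V_j(g,f)$, which is the first displayed identity in degree $j$.

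Next, for the total complex I will use that $C_{\mathrm{KH}}(D)$ is an orthogonal direct sum over $j$, compatible with $\partial$, $\partial^\dagger$ and hence $\Delta$. Pseudodeterminants therefore multiply, giving $\tau^\C(D)=\prod_j\tau_j^\C(D)$. Orders of torsion subgroups multiply as well, giving $\tau^\Z(K)=\prod_j\tau^\Z_j(K)$. The volume $V(g,f)$ computed from a block-diagonal Gram matrix factors as $\prod_j V_j(g,f)$. Taking the product of the degree-$j$ identities yields the total identity. Alternatively, Theorem~\ref{tauZtauCV} can be applied directly to the total complex.

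For the "equivalently" part, I will substitute $h=g$ in~\eqref{eq:Rtorbasis}, keeping $e$ orthonormal on cochains and $f$ orthonormal on cohomology. The cochain factor $\det B_{e^i,e^i}$ is trivial, and what remains is $\prod_i\det(B_{f^i,g^i})^{(-1)^{i+1}}$. Since $f$ is orthonormal, the entries of $B_{g,f}$ are $\langle g_{a},f_{b}\rangle$ (up to conjugation), so $|\det B_{g^i,f^i}|=\widetilde v_i(g,f)$ and $\det B_{f^i,g^i}=\det B_{g^i,f^i}^{-1}$. I expect these factors to combine to $V(g,f)^{-1}$, which would give $\tau^\C(D;e,g)=\tau^\C(D)/V(g,f)=\tau^\Z(K)$, and the same in each $q$-degree.

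The main obstacle is sign and exponent bookkeeping. The exponent $(-1)^{i+1}$ in~\eqref{eq:Rtorbasis} must match the exponent $(-1)^{j+1}$ in Definition~\ref{defvolume} and the conventions of~\cite{L2013}, and the cochain-versus-chain conventions flagged earlier could flip an exponent. I will check this directly on the twisted-unknot example (regulator $\sqrt2^{(-1)^{n+1}}$) to fix the orientation. I also need to confirm that identifying cohomology with the harmonic subspace, as Definition~\ref{combinatorialtorsion} requires, is compatible with measuring $g$ against $f$. Here I would note that $\langle g_a,f_b\rangle$ depends only on the class of $g_a$, because $f_b$ is harmonic and therefore orthogonal to $\Im\partial$.
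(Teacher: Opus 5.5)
Your route is essentially the paper's, run in the other direction. The paper takes $\tau^\C(D;e,g)=\tau^\Z(K)$ from Theorem~\ref{thm:integral-torsion}, which itself cites Theorem~\ref{tauZtauCV}, and then changes the cohomology basis to $f$. You instead apply Theorem~\ref{tauZtauCV} in each $q$-degree, assemble the total complex through the orthogonal $q$-splitting, and recover the basis-dependent form from \eqref{eq:Rtorbasis}.

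Your extra details are correct and are left implicit in the paper: multiplicativity of $\det\nolimits'$, torsion orders and Gram determinants over the $q$-grading, and the fact that $\langle g_a,f_b\rangle$ depends only on the class of $g_a$ because $f_b\perp\Im\partial$. You should also take $f$ to be a \emph{real} orthonormal basis. Then $\det B_{g^i,f^i}=\pm\widetilde v_i(g,f)$, with no unit phase left over in $\C^*/\{\pm1\}$.

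\textbf{The gap is the sign you deferred, and it resolves against the statement.} With the exponents fixed in Definitions~\ref{combinatorialtorsion}, \ref{deford} and~\ref{defvolume}, the regulator enters inversely.

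Take $0\to\Z^2\xrightarrow{(1\ 1)}\Z\to0$ in degrees $0,1$, with orthonormal bases $a,b$ and $c$. This is exactly the $q$-degree summand spanned by $1x,x1$ and $x$ in Example~\ref{exam:twisted} with $n=0$.
\begin{itemize}
\item Here $\det\nolimits'\Delta_0=\det\nolimits'\Delta_1=2$, so $\tau^\C=\det\nolimits'(\Delta_0)^{0}\det\nolimits'(\Delta_1)^{1/2}=\sqrt2$.
\item $\mathrm H^0=\Z(a-b)$ and $\mathrm H^1=0$, so $\tau^\Z=1$.
\item $\widetilde v_0=|\langle a-b,(a-b)/\sqrt2\rangle|=\sqrt2$, so $V=1/\sqrt2$.
\end{itemize}
Thus $\tau^\C=\tau^\Z/V$, not $\tau^\Z V$ as the statement and Theorem~\ref{tauZtauCV} as printed assert.

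Example~\ref{exam:twisted} appears to confirm the printed orientation only because it evaluates $\prod_i2^{(-1)^i i/2}$ instead of the defined exponent $(-1)^{i+1}i/2$. With the definition it gives $\sqrt2^{(-1)^n}$. So the twisted-unknot check you plan would refute the orientation rather than fix it.

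The change-of-basis rule \eqref{eq:Rtorbasis} carries the same inversion. On this complex it gives
\[
\tau^\C(C;e,a-b)=\sqrt2\cdot\det\bigl(B_{(a-b)/\sqrt2,\,a-b}\bigr)^{-1}=2\neq1=\tau^\Z .
\]
In general, your substitution correctly yields $\tau^\C(D;e,g)=\tau^\C(D)/V(g,f)$, but this equals $\tau^\Z(K)/V(g,f)^2$. So your step ``Theorem~\ref{tauZtauCV} gives $\tau_j^\Z=\tau_j^\C/V_j$'' fails. Neither your argument nor the paper's can establish the identities as written; the paper's proof inherits the same inverted exponent.

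What the argument does prove, with conventions consistent with Definition~\ref{combinatorialtorsion}, is $\tau_j^\C(D)=\tau_j^\Z(K)/V_j(g,f)$. This is equivalent to $\tau_j^\C(D;e,g)=\tau_j^\Z(K)$ once the change of basis is written as $\tau^\C(C;e,g)=\tau^\C(C)\prod_i\det(B_{g^i,f^i})^{(-1)^{i+1}}$.
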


\begin{proof}
Since
\(C_{\mathrm{KH}}(D;\mathbb Z)\) is a finite free cochain complex over
\(\mathbb Z\), reducing its integer differentials to Smith normal form
computes both the torsion subgroups of integral Khovanov cohomology and a
basis \(g\) for the free part of the saturated integral lattice
\[
\mathrm{KH}^*(K;\mathbb Z)/\operatorname{Tors}
\subset
\mathrm{KH}^*(K;\mathbb Q).
\]
Theorem~\ref{thm:integral-torsion} identifies the basis-dependent
complex torsion evaluated in the chain basis \(e\) and the
integral cohomology basis \(g\) with the integral torsion:
\[
\tau^\C(D;e,g)=\tau^\Z(K),
\qquad
\tau_j^\C(D;e,g)=\tau_j^\Z(K).
\]
Changing the cohomology basis from \(g\) to the orthonormal basis \(f\)
introduces exactly the regulator factor \(V(g,f)\), and similarly
\(V_j(g,f)\) in fixed \(q\)-degree. The result follows from 
Theorem~\ref{tauZtauCV}.
\end{proof}

\begin{remark}\label{rmk:ortiz_navarro_conj}
The same argument applies to any finite free cochain complex over
\(\mathbb Z\) equipped with an inner product after extension of scalars to
\(\mathbb C\). 

The statement about $\tau_j^\C(D;e,g)$ from the above corollary, provides an explanation for the phenomena observed in~\cite[Section 5.4]{ORTIZNAVARRO2012} and shows that Reidemeister torsion detects algebraic torsion in Khovanov homology for any knot or link, generalizing the statement from the abstract of \cite{ORTIZNAVARRO2012}.
\end{remark}

The next two examples are illustrative and demonstrate the content of the previous statements. First, we look at the simplest example of integral torsion in Khovanov cohomology for knots. That is the $\Z/2\Z$ torsion that appears in the computations for the trefoil.

\begin{example}\label{eg:trefoil torsion}
    Let $D_{3_1}$ be the standard diagram of the trefoil $3_1$. We compute its torsion in $q$-degree $-7$. Restricting \cite[Figure 3]{Bar_Natan_2005} to $q$-degree $-7$ gives us the following cochain complex.
      \begin{figure}[H]
        \centering
\begin{tikzcd}
            &                                                                                              & \langle xx \rangle[-2] \arrow[rd] &   \\
0 \arrow[r] & {\langle 1xx, x1x, xx1\rangle}[-3] \arrow[ru, "m_{13}"] \arrow[r, "m_{23}"] \arrow[rd, "m_{12}"] & \langle xx \rangle[-2] \arrow[r]  & 0 \\
            &                                                                                              & \langle xx \rangle [-2]\arrow[ru] &  
\end{tikzcd}
    \end{figure}
\noindent Here $[i]$ denotes the $i$-th homological degree. We will compute the Laplacians for cohomological degrees $-3$ and $-2$. First, we observe that: 
\begin{align*}
    m^\dagger: &1\mapsto 11 \\
    &x \mapsto 1x +x1, \\
    mm^\dagger:& 1 \mapsto 1 \\
    &x \mapsto 2x, \\
    m^\dagger m:& 11 \mapsto 11 \\
    &1x \mapsto 1x + x1 \\
    &x1 \mapsto 1x + x1 \\
    &xx \mapsto 0.
\end{align*}
That gives us the Laplacians:
    \begin{align*}
        \Delta_{-3}^{\mathrm{KH}} = \Delta_{-2}^{\mathrm{KH}} &=  
\begin{pmatrix}
2 & 1 & 1 \\
1 & 2 & 1 \\
1 & 1 & 2
\end{pmatrix} 
    \end{align*}
    The eigenvalues of this matrix are $1,1,4$. Hence: 
    \begin{align*}
        \tau^\C(C_{\mathrm{KH}}^{*,-7}(D_{3_1})) &= \prod_{i\in \{-3,-2\}}4^{(-1)^{i+1} i/2} \\
        &= \frac{1}{8} \cdot 4 = \frac{1}{2} \\
        \tau^{\Z}(C_{\mathrm{KH}}^{*,-7}(D_{3_1})) &= 2^{(-1)^{-1}} = \frac{1}{2}
    \end{align*}
where the last line follows from the fact that the integral Khovanov cohomology in $q$-degree $-7$ only has one nontrivial group $\Z/2\Z$ in homological degree $-2$ (see \cite{Bar_Natan_2005}). 
\end{example}
In the second example we exhibit the presence of nontrivial volume. 

\begin{example} \label{exam:twisted}
    We will compute the $R$-torsion of the once twisted unknot, shifted to homological degree $n$ (the degree shift can be interpreted as if the unknot was a component of a link, disjoint from the rest of the link). Khovanov cube looks like:
    
    \begin{figure}[ht]
        \centering
\begin{tikzcd}
{\langle 1,x \rangle [n]} &  &                             \\
\oplus \arrow[rr, "m"]    &  & {\langle 1,x \rangle [n+1]} \\
{\langle 1,x \rangle [n]} &  &                            
\end{tikzcd}
    \end{figure}
\noindent As in the previous example compute the Laplacian:
    \begin{align*}
        \Delta_{n} &=  
\begin{pmatrix}
1 & 0 & 0 & 0\\
0 & 1 & 1 & 0\\
0 & 1 & 1 & 0 \\
0 & 0 & 0 & 0
\end{pmatrix} 
    \end{align*}
    The eigenvalues of this matrix are $1,2$. Similar computation shows that the eigenvalues of $\Delta_{n+1}$ are also $1,2$ since
     \begin{align*}
        \Delta_{n+1} &=  
\begin{pmatrix}
1 & 0\\
0 & 2 \\
\end{pmatrix} 
    \end{align*}   
 Hence we can compute 
    \begin{align*}
        \tau^\C(C_{\mathrm{KH}}(TU_1)) &= \prod_{i\in \{n, n+1\}} 2^{(-1)^{i} i/2} \\
        &=2^{(-1)^{n} (n/2 - (n+1)/2)} \\
        & = \sqrt{2}^{(-1)^{n+1}}
    \end{align*}

An integral basis for $\ker(\Delta_n)$ is $1x - x1, xx$ and $\ker(\Delta_{n+1}) = \{\}$. The corresponding orthonormal basis would be $(1x - x1)/\sqrt{2}, xx$. Therefore, the volume of the integral basis is 
     \begin{align*}
        &\det \begin{pmatrix}
\langle 1x - x1 , (1x - x1)/\sqrt{2}\rangle & \langle 1x - x1 , xx\rangle\\
\langle  xx , (1x - x1)/\sqrt{2}\rangle &\langle xx , xx\rangle \\
\end{pmatrix} \\
&=\det \begin{pmatrix}
    \sqrt{2} & 0 \\
    0 & 1
\end{pmatrix} \\
&= \sqrt{2}
    \end{align*} 

We know that this complex is torsion free (it is just the Khovanov homology of the unknot), thus
\begin{align*}
    \sqrt{2} ^{(-1)^{n+1}} \cdot 1 = \sqrt{2}^{(-1)^{n+1}}
\end{align*}
as expected.
\end{example}
When the complex cohomology vanishes, the regulator is trivial and the spectral torsion detects the integral torsion directly. When a free summand is present, additional arithmetic lattice data is genuinely required. The following degreewise reformulation isolates the spectral contribution in a single cohomological degree.

\begin{proposition}\label{propdecomposition cochain}
    Let $C = (C^i, \partial_i)_i$ be a free, finite dimensional cochain complex over $\Z$. Let $E_i$ be the following complex (assigned to $C^i$) in degrees $i-1, i, i+1$: 
    \begin{figure}[ht]
        \centering
        \begin{tikzcd}
0 \arrow[r] & \mathrm{coim}(\partial_{i-1}) \arrow[r, "\partial_{i-1}"] & C^i \arrow[r, "\partial_i"] & \Im(\partial_i) \arrow[r] & 0,
\end{tikzcd}
    \end{figure}
    
\noindent where $\partial_{i-1}$ is restricted to a subdomain. It then holds that:
    \begin{align}
        \mathrm{H}^i(C, \Z) \cong \mathrm{H}(E_i, \Z) = \mathrm{H}^i(E_i, \Z), 
    \end{align}
that is, the cohomology of $E_i$ over $\Z$ vanishes outside degree $i$, where it agrees with $\mathrm H^i(C;\Z)$.
\end{proposition}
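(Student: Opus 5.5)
We compute the cohomology of $E_i$ directly in each of its three degrees. We interpret $\mathrm{coim}(\partial_{i-1})$ as $C^{i-1}/\ker(\partial_{i-1})$, which over $\Z$ is isomorphic to $\Im(\partial_{i-1})\subseteq C^i$. As a subgroup of the free group $C^i$, this image is free. Likewise $\Im(\partial_i)\subseteq C^{i+1}$ is free, so $E_i$ is again a finite free complex. The maps in $E_i$ are the induced map $\bar\partial_{i-1}\colon \mathrm{coim}(\partial_{i-1})\to C^i$ and the corestriction $\partial_i\colon C^i\to\Im(\partial_i)$. First we check that $E_i$ is a complex: $\partial_i\circ\bar\partial_{i-1}=0$ because $\partial_i\partial_{i-1}=0$.

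\emph{Degree $i-1$.} The cohomology here is $\ker(\bar\partial_{i-1})$. The map $\bar\partial_{i-1}$ is injective by construction, since we have quotiented by $\ker\partial_{i-1}$. Hence $\mathrm H^{i-1}(E_i)=0$.

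\emph{Degree $i+1$.} The cohomology here is $\Im(\partial_i)/\Im(\partial_i)=0$, because the corestricted map is surjective onto its codomain.

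\emph{Degree $i$.} The kernel of $\partial_i\colon C^i\to \Im(\partial_i)$ equals the kernel of $\partial_i\colon C^i\to C^{i+1}$, because corestriction does not change kernels. The image of $\bar\partial_{i-1}$ equals $\Im(\partial_{i-1})$. Therefore
\[
\mathrm H^i(E_i)=\ker\partial_i/\Im\partial_{i-1}=\mathrm H^i(C;\Z),
\]
and this identification is literally an equality of subquotients of $C^i$.

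The only point needing care is integrality. The quotients $\mathrm{coim}(\partial_{i-1})$ and $\Im(\partial_i)$ must be taken with their natural lattice structures rather than after tensoring with $\Q$. Otherwise torsion could be lost in degree $i$. With the definitions above, however, no saturation is performed, so the degree-$i$ computation retains the torsion subgroup of $\mathrm H^i(C;\Z)$ exactly. The freeness observation only matters later, for applying torsion formulas to $E_i$; it is not needed for the cohomology computation itself.
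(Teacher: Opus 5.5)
Your proof is correct and takes the same approach as the paper. The paper simply appeals to the definition of cohomology (citing the Stacks Project), and you spell out the degree-by-degree computation explicitly: injectivity of the induced map on $\mathrm{coim}(\partial_{i-1})$, surjectivity of the corestriction onto $\Im(\partial_i)$, and the equality $\Im\bar\partial_{i-1}=\Im\partial_{i-1}$ as subgroups of $C^i$.
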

\begin{proof}
    This follows from the definition of cohomology. See also \cite[\href{https://stacks.math.columbia.edu/tag/0118}{Tag 0118}]{stacks-project}.
\end{proof}
\begin{thm}
    We use the notation of the previous proposition. Let $e = (e_j)_j$ be an integral basis of $\mathrm{H}^i(C, \Z)$. Equip $C \otimes \C$ with an inner product that makes $e$ orthonormal. The inner product restricts to $E_i$. Let $f = (f_j)_j$ be an orthonormal basis of $\mathrm{H}^i(C, \C)$.  Then it follows that: 
    \begin{align*}
        \tau^\C(E_i \otimes \C) = \left(\mathrm{ord}\left(\mathrm{Torsion}\left(\mathrm{H}^{i}(C) \right) \right) \right)^{(-1)^{i+1}} \widetilde v_i(e,f)^{(-1)^{i+1}},
    \end{align*}
    where we used the notation from Definitions \ref{deford} and \ref{defvolume}. Moreover, the following holds:
    \begin{align*}
        \tau^\C(E_i \otimes \C) =  \det \nolimits' (\partial^\dagger_{i-1}\partial_{i-1})^{(-1)^{i} (i-1)/2} \det \nolimits'(\Delta_{i})^{(-1)^{i+1} i/2} \det \nolimits'(\partial_{i}\partial_{i}^\dagger)^{(-1)^{i+2} (i+1)/2}.
    \end{align*}
\end{thm}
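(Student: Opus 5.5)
The plan is to prove the second displayed formula first, since it is purely spectral, and then obtain the first formula by applying Theorem~\ref{tauZtauCV} to the three-term complex $E_i$.

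\textbf{Spectral formula.} First I fix the inner-product model of $E_i\otimes\C$.
\begin{itemize}
\item Over $\C$, realize $\mathrm{coim}(\partial_{i-1})$ as the orthogonal complement $(\ker\partial_{i-1})^{\perp}=\Im(\partial_{i-1}^{\dagger})\subseteq C^{i-1}\otimes\C$.
\item Realize $\Im(\partial_i)$ as a subspace of $C^{i+1}\otimes\C$.
\item Give both the restricted inner products.
\end{itemize}
Next I check the adjoints. The adjoint of the restricted map $\partial_{i-1}|\colon(\ker\partial_{i-1})^\perp\to C^i$ is $\partial_{i-1}^{\dagger}$ itself, because $\partial_{i-1}^\dagger$ already lands in $\Im(\partial_{i-1}^\dagger)$. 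Similarly, the adjoint of the corestricted map $\partial_i\colon C^i\to\Im(\partial_i)$ is the restriction of $\partial_i^\dagger$ to $\Im(\partial_i)$.

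The three Laplacians of $E_i$ are then as follows.
\begin{itemize}
\item In degree $i-1$ it is $\partial_{i-1}^\dagger\partial_{i-1}$ restricted to $\Im(\partial_{i-1}^\dagger)$. This is exactly the invariant subspace carrying all nonzero eigenvalues of $\partial_{i-1}^\dagger\partial_{i-1}$ on $C^{i-1}$.
\item In degree $i$ it is literally $\Delta_i$.
\item In degree $i+1$ it is $\partial_i\partial_i^\dagger$ on $\Im(\partial_i)$, which again carries all of its nonzero spectrum (Remark~\ref{rmk:eigenvalue-inheritance}).
\end{itemize}
So the pseudodeterminants agree with $\det'(\partial_{i-1}^\dagger\partial_{i-1})$, $\det'(\Delta_i)$ and $\det'(\partial_i\partial_i^\dagger)$. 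Substituting into Definition~\ref{combinatorialtorsion} with exponents $(-1)^{k+1}k/2$ for $k=i-1,i,i+1$ gives the second displayed formula.

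\textbf{Integral formula.} By Proposition~\ref{propdecomposition cochain}, $E_i$ has integral cohomology only in degree $i$, where it is $\mathrm H^i(C;\Z)$. Hence Definition~\ref{deford} gives $\tau^\Z(E_i)=\mathrm{ord}(\mathrm{Torsion}\,\mathrm H^i(C))^{(-1)^{i+1}}$. Likewise, Definition~\ref{defvolume} reduces the regulator to the single factor $\widetilde v_i(e,f)^{(-1)^{i+1}}$. Theorem~\ref{tauZtauCV} then yields $\tau^\C=\tau^\Z\cdot V$, which is the first claim.

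To apply Theorem~\ref{tauZtauCV} I must verify that $E_i$ is a finite free complex over $\Z$. This holds because $\mathrm{coim}(\partial_{i-1})\cong\Im(\partial_{i-1})$ and $\Im(\partial_i)$ are subgroups of free abelian groups.

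\textbf{Main obstacle.} Theorem~\ref{tauZtauCV} requires the integral basis of the complex to be orthonormal. The integral bases of $\mathrm{coim}(\partial_{i-1})$ and $\Im(\partial_i)$ need not be orthonormal for the restricted inner product; only $C^i$ is guaranteed to have this property.

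I would handle this by using the basis-dependent form, Definition~\ref{def:change basis in torsion}, together with the general statement of \cite{L2013}. The aim is to show that the covolume factors of these two end lattices cancel against the contributions they make to $\tau^\C$. Concretely, I would compute via Lemma~\ref{SEStorsion} applied to the short exact sequences
$0\to \ker\partial_i\to C^i\to\Im\partial_i\to0$ and $0\to\Im\partial_{i-1}\to\ker\partial_i\to \mathrm H^i\to 0$.

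If exact cancellation fails, I would read the hypothesis ``an inner product that makes $e$ orthonormal'' as fixing the lattice normalization. In that case the statement would hold after interpreting $\tau^\C(E_i\otimes\C)$ with integral bases on the end terms, and I would record exactly which change-of-basis determinants appear.
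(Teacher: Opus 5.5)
\textbf{There is a genuine gap: the obstacle you flag is real, and the cancellation you hope for does not happen, because the first display is false as stated.}

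Your argument for the second display is the paper's argument. You realize $\mathrm{coim}(\partial_{i-1})$ as $\Im(\partial_{i-1}^\dagger)$, observe that the adjoints of the (co)restricted maps are restrictions of $\partial_{i-1}^\dagger$ and $\partial_i^\dagger$, and note that each end Laplacian carries the whole nonzero spectrum.

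For the first display, the paper only says it follows from Proposition~\ref{propdecomposition cochain} and the definitions. In effect it applies Theorem~\ref{tauZtauCV} to $E_i$ exactly as in your first step, and never confronts the obstacle you raise. With the restricted inner product the first display fails, as the following examples show.
\begin{itemize}
\item Take $C^{i-1}=\Z^2\xrightarrow{(1\;1)}\Z=C^i$ and $C^{i+1}=0$, with orthonormal integral bases $u_1,u_2$ and $w$. Then $\mathrm H^i(C;\Z)=0$, so the right-hand side of the first display is $1$. On the other hand $\det\nolimits'(\partial_{i-1}^\dagger\partial_{i-1})=\det\nolimits'(\Delta_i)=2$, so the second display gives $\tau^\C(E_i\otimes\C)=2^{(-1)^{i+1}/2}$. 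Over $\Z$ the complex $E_i$ is just $\Z\xrightarrow{1}\Z$. The discrepancy is the covolume $1/\sqrt{2}$ of the coimage lattice, which is generated by $\tfrac12(u_1+u_2)$ in $\Im(\partial_{i-1}^\dagger)$.
\item Symmetrically, take $C^i=\Z\xrightarrow{2}\Z=C^{i+1}$ with $C^{i-1}=0$. Then $\tau^\C(E_i\otimes\C)=2^{(-1)^i}$ although $\mathrm H^i(C;\Z)=0$. The end lattice $\Im(\partial_i)=2\Z$ records the $\Z/2$ in $\mathrm H^{i+1}(C;\Z)$, which is not part of the cohomology of $E_i$.
\end{itemize}

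\textbf{What is actually true.} Apply Definition~\ref{def:change basis in torsion} with the integral bases of all three terms of $E_i$, in the conventions of Definitions~\ref{combinatorialtorsion}, \ref{deford} and~\ref{defvolume}. This yields
\[
\tau^\C(E_i\otimes\C)=\mathrm{ord}\bigl(\mathrm{Torsion}(\mathrm H^i(C))\bigr)^{(-1)^{i+1}}\Bigl(\widetilde v_i(e,f)\,\mathrm{vol}\bigl(\mathrm{coim}(\partial_{i-1})\bigr)\,\mathrm{vol}\bigl(\Im(\partial_i)\bigr)\Bigr)^{(-1)^{i}}.
\]
Here $e$ is an integral basis of the free part of $\mathrm H^i(C;\Z)$, and $\mathrm{vol}$ is the covolume of the integral lattice in the induced inner product. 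For the coimage, this covolume is the reciprocal of the covolume of the integral kernel of $\partial_{i-1}$.

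\textbf{The regulator exponent.} Note the exponent $(-1)^i$ on $\widetilde v_i$. For $C^i=\Z^2\xrightarrow{(1\;1)}\Z=C^{i+1}$ with $C^{i-1}=0$, both end lattices are unimodular and $\widetilde v_i=\sqrt{2}$. Yet $\tau^\C(E_i\otimes\C)=\sqrt{2}^{(-1)^i}$, not $\sqrt{2}^{(-1)^{i+1}}$. This comes from a sign mismatch between Definition~\ref{combinatorialtorsion} and Theorem~\ref{tauZtauCV} as stated; Example~\ref{exam:twisted} only matches because it computes with the exponent $(-1)^i i/2$. So citing Theorem~\ref{tauZtauCV} as a black box would not give the displayed regulator exponent even in the unimodular case.

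\textbf{Your fallback.} Computing $\tau^\C(E_i\otimes\C)$ in integral bases on the end terms does make the first display hold, up to that sign. But the quantity is then no longer the orthonormal spectral torsion computed by the second display. So no single reading of $\tau^\C(E_i\otimes\C)$ makes both displays true.
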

\begin{proof}
    The first equality follows from Proposition \ref{propdecomposition cochain} and the relevant definitions.

    For the second equality, we will only focus on the first nontrivial term of $E_i \otimes \C$. An analogous argument works for the other two factors in the above product. 
    
    We show that the first nontrivial factor of $E_i\otimes\C$, namely $\mathrm{coim}(\partial_{i-1})$, has Hodge-Laplacian spectrum equal to the spectrum of
    \[
    \widehat\Delta_{i-1}
    :=
    \partial_{i-1}^\dagger\partial_{i-1}\big|_{\ker(\partial_{i-1})^\perp}.
    \]
    After extension of scalars to $\C$, the coimage $C^{i-1}/\ker(\partial_{i-1})$ is represented canonically by the orthogonal complement $\ker(\partial_{i-1})^\perp=\Im(\partial_{i-1}^\dagger)$. On this subspace, $\widehat\Delta_{i-1}$ is positive definite. Replacing the quotient by this orthogonal representative changes neither the nonzero spectrum nor its pseudodeterminant. Since the incoming map to the first nonzero term of $E_i$ is zero, its Hodge Laplacian is precisely this restriction of $\partial_{i-1}^\dagger\partial_{i-1}$. The same argument applies to the last term, giving the displayed product.
\end{proof}
\begin{remark}
    Note that the choice of the inner product could be made laxer. However, that would lead to additional terms and factors involving change of bases matrices, which we chose to omit for clarity.
\end{remark}

We would hope that torsion would distinguish some knots that integral Khovanov cohomology does not. While this is not the case when we compute the torsion of $C_{\mathrm{KH}}$, we might be able to differentiate more knots by computing the torsion of chain maps between their corresponding cochain complexes as in the case of lens spaces. Unfortunately this seems relatively far fetched as most knot cobordisms have saddles, caps or cups. These assure that the induced chain map on $C_{\mathrm{KH}}$ is not a quasi-isomorphism for dimension reasons. However, we can say something about the torsion of quasi-isomorphisms. In particular, we know that the torsion of the maps induced by Reidemeister moves equals $1$ (see \cite{ORTIZNAVARRO2012}). This suggests that there are some parallels with simple homotopies, which are used to differentiate lens spaces. See \cite{mnev2014lecturenotestorsions, Turaev2001} for a reference on this classical application of torsion. 

\begin{proposition}\label{prop:torsion of reidemeister moves}
    Let $D_A, D_B$ be two different knot diagrams. Denote by $\rho$ a quasi-isomorphism between $A=C_{\mathrm{KH}}(D_A;\Z)$ and $B=C_{\mathrm{KH}}(D_B;\Z)$. Fix the choices of (integral) basis of the chain complexes $A \otimes \C$ and $B \otimes \C$, and their cohomologies as in \cite[Section 3.2, Section 4.4]{ORTIZNAVARRO2012}. Then 
    \begin{align*}
        \tau^{\C}(\rho) = \tau^{\C}(\chi), 
    \end{align*} where $\chi$ is taken from Lemma \ref{SEStorsion}.
    
    If $D_A$ and $D_B$ represent the same knot then $\tau^{\C}(\rho) = 1$. The same holds for torsions in a specific $q$-degree.
\end{proposition}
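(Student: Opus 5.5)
The plan is to realize $\rho$ through its mapping cone and apply the short exact sequence formula, Lemma~\ref{SEStorsion}. By Definition~\ref{deftorsionofmap}, $\tau^{\C}(\rho)=\tau^{\C}(\mathrm{cone}(\rho))$. The cone sits in the standard short exact sequence of free complexes
\[
0\to B\to \mathrm{cone}(\rho)\to A[1]\to 0,
\]
which is split degreewise and compatible with the fixed integral bases. Lemma~\ref{SEStorsion} then gives
\[
\tau^{\C}(\mathrm{cone}(\rho))=\tau^{\C}(B)\,\tau^{\C}(A[1])\,\tau^{\C}(\chi),
\]
where $\chi$ is the associated long exact cohomology sequence.

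To get the first claim, $\tau^{\C}(\rho)=\tau^{\C}(\chi)$, I would compare $\tau^{\C}(A[1])$ with $\tau^{\C}(A)$. A shift by one reverses the parity of every degree, so $\tau^{\C}(A[1])=\tau^{\C}(A)^{-1}$ up to sign; I will track the exponents in Definition~\ref{def:change basis in torsion} carefully here. Next I invoke Theorem~\ref{thm:integral-torsion}. In the chosen integral bases we have $\tau^{\C}(A;e,f)=\tau^{\Z}(K_A)$ and $\tau^{\C}(B;e,f)=\tau^{\Z}(K_B)$. Because $\rho$ is a quasi-isomorphism, $\mathrm H^*(A;\Z)\cong \mathrm H^*(B;\Z)$, so these integral torsions agree and the factors $\tau^{\C}(B)\tau^{\C}(A[1])$ cancel, leaving $\tau^{\C}(\chi)$. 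The bookkeeping I expect to be most delicate is that the torsions of $A$ and $B$ appearing in Lemma~\ref{SEStorsion} must be taken in the same based sense (chain basis $e$, cohomology basis $f$) as in Theorem~\ref{thm:integral-torsion}. The cohomology bases of $B$, $A[1]$ and the cone have to be the ones used in $\chi$; I will make this explicit.

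For the second claim, that $\tau^{\C}(\rho)=1$ when $D_A$ and $D_B$ represent the same knot, I would compute $\tau^{\C}(\chi)$ directly. Since $\rho$ is a quasi-isomorphism, the cone is acyclic and $\chi$ decomposes into isomorphisms $\mathrm H^i(A)\xrightarrow{\rho^*}\mathrm H^i(B)$. The torsion of such an acyclic two-term piece is $\det$ of $\rho^*$ in the chosen bases, raised to $\pm1$. I plan to argue over $\Z$ that $\rho^*$ is an isomorphism of integral cohomology, which holds for the chain homotopy equivalences coming from Reidemeister moves. It therefore maps the saturated free lattice basis $f_A$ to a $\Z$-basis of the lattice spanned by $f_B$, so the change of basis matrix lies in $GL(\Z)$ and has determinant $\pm1$. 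Torsion parts contribute trivially because they vanish after $\otimes\C$, and the integral lattice bases fix the normalization. This gives $\tau^{\C}(\chi)=\pm1$, which is $1$ in $\C^*/\{\pm1\}$; this also agrees with \cite{ORTIZNAVARRO2012}.

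The main obstacle is justifying that $\rho^*$ is unimodular on the integral lattices when $\rho$ is only assumed to be a quasi-isomorphism over $\C$. I would resolve this by assuming $\rho$ is defined over $\Z$ and is an integral quasi-isomorphism, as Reidemeister maps are, and I will state this assumption explicitly. Finally, Khovanov complexes and Reidemeister maps preserve $q$-degree, so the whole argument restricts summand by summand to each $C^{*,j}_{\mathrm{KH}}$. Together with the $q$-graded statement of Theorem~\ref{thm:integral-torsion}, this gives the $q$-degree version.
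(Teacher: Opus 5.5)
Your proposal is correct. For the first claim it is the paper's argument: apply Lemma~\ref{SEStorsion} to $0\to B\to\mathrm{cone}(\rho)\to A[1]\to 0$, then cancel $\tau^{\C}(B)\,\tau^{\C}(A[1])$ using Theorem~\ref{thm:integral-torsion} together with $\mathrm H^*(A;\Z)\cong\mathrm H^*(B;\Z)$. The shift identity you need holds exactly. Writing $\det\nolimits'(\Delta_k)=\mu_k\mu_{k-1}$ with $\mu_k=\det\nolimits'(\partial_k^{\dagger}\partial_k)$, the discrepancy between $\tau^{\C}(A[1])$ and $\tau^{\C}(A)^{-1}$ is $\prod_k\det\nolimits'(\Delta_k)^{(-1)^{k+1}/2}$, and this product telescopes to $1$.

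For the second claim your route differs. The paper simply cites \cite[Theorem 2]{ORTIZNAVARRO2012}, i.e.\ Ortiz-Navarro's explicit verification that the Reidemeister chain maps have torsion one. You instead evaluate $\tau^{\C}(\chi)$ directly. The cone is acyclic, so $\chi$ splits into the isomorphisms $\rho^*\colon\mathrm H^i(A)\to\mathrm H^i(B)$. Because $\rho$ is an integral quasi-isomorphism, $\rho^*$ carries $\mathrm H^i(A;\Z)/\mathrm{Tors}$ onto $\mathrm H^i(B;\Z)/\mathrm{Tors}$, so its matrix in the Smith-normal-form lattice bases lies in $GL(\Z)$.

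This argument is self-contained and proves more than stated. It uses neither that $D_A$ and $D_B$ present the same knot nor that $\rho$ comes from Reidemeister moves, only that $\rho$ is a quasi-isomorphism over $\Z$. Even more directly, the cone is then $\Z$-acyclic with an orthonormal integral basis, so Theorem~\ref{tauZtauCV} gives $\tau^{\C}(\mathrm{cone}(\rho))=\tau^{\Z}(\mathrm{cone}(\rho))=1$. This is essentially the shortcut the paper alludes to in Remark~\ref{alternative torsion is an invariant proof}. The cited route avoids any lattice bookkeeping, but at the cost of resting on Ortiz-Navarro's lengthy explicit computations.

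The assumption you single out as the main obstacle is not an extra hypothesis. The statement takes $\rho$ to be a quasi-isomorphism between the integral complexes $A$ and $B$, so integrality is given. It is also genuinely needed. For a quasi-isomorphism defined only over $\C$, such as $2\rho$, each factor $\det\rho^*$ in $\tau^{\C}(\chi)$ is rescaled by a power of $2$. The resulting change in $\tau^{\C}(\chi)$ is nontrivial in general.
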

\begin{proof}
    The first part of the statement is an application of Lemma \ref{SEStorsion} and Theorem \ref{thm:integral-torsion}. The second statement is the content of \cite[Theorem 2]{ORTIZNAVARRO2012}. 
\end{proof}

\begin{remark}\label{alternative torsion is an invariant proof}
    While the fact that chain maps induced by Reidemeister moves have torsion $1$ is shown in \cite[Theorem 2]{ORTIZNAVARRO2012}, their computations are arduous, but necessary for the proof of invariance of the \textit{volume form} (see \cite{ORTIZNAVARRO2012} for definition and discussion). If one is only interested in the torsion invariants with respect to the chosen basis of the previous proposition, we could use that Proposition \ref{prop:torsion of reidemeister moves} and Theorem \ref{thm:integral-torsion} as a proof of invariance of torsion (in integral bases) under Reidemeister moves. 
\end{remark}

\section{Lee's homology and the Lee spectral sequence}\label{sec:lee}

In~\cite{Lee2005} Lee deformed the Khovanov differential to produce a chain complex whose homology (Lee homology) is isomorphic to $\C^{2^{|K|}}$ (where $|K|$ is the number of components of a link $K$). The filtration associated to the deformation of the differential gives rise to the Lee spectral sequence~\cite{Rasmussen2010}. In this section we use the homological perturbation framework of Section~\ref{subsec:Hom-pert} to give a harmonic model for the Lee spectral sequence. We demonstrate that Lee's canonical generators are not generally harmonic for the Khovanov Laplacian. Instead, the Lee perturbation can be used to transfer the Khovanov harmonic subspace to the Lee chain complex via the homological pertrubation lemma (see Theorem \ref{thm:lee-hodge-transfer}).

\begin{definition}\label{lee}
For a field $F$ and an oriented knot $K$ with diagram $D$, write $C_{\mathrm{Lee}}(D;F)$ for the Lee chain complex and $\mathrm{Lee}(K;F)$ for its homology, in the conventions of~\cite{Lee2005, Rasmussen2010}.  
\end{definition}

\begin{proposition}
\label{prop:lee-generators-not-kh-harmonic}
Lee's canonical chain representatives are not, in general, harmonic for the Khovanov Laplacian.
\end{proposition}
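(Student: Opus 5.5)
Since the statement is existential (``not, in general''), the plan is to exhibit a single diagram and a single Lee generator that fails to be harmonic. The cheapest way to fail harmonicity is to fail closedness. Harmonic cochains lie in $\ker\partial\cap\ker\partial^{\dagger}$ by Theorem~\ref{thm_Hodge}. So it suffices to find a Lee canonical representative $\mathfrak s_o$ with $\partial^{\mathrm{KH}}\mathfrak s_o\neq 0$. Then $\langle \mathfrak s_o,\Delta^{\mathrm{KH}}\mathfrak s_o\rangle\ge \|\partial \mathfrak s_o\|^2>0$, so $\mathfrak s_o\notin\ker\Delta^{\mathrm{KH}}$.

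I would use the one-crossing diagram $TU_1$ of Example~\ref{exam:twisted}. Its crossing is positive, so the oriented resolution is the $0$-resolution in the conventions of Section~\ref{ssec:kauffman}. This resolution consists of the two circles that are merged by $m$ along the unique cube edge. Lee's generators $\mathfrak s_o$ live in this resolution, in the conventions of~\cite{Lee2005,Rasmussen2010}. They are tensor products of the elements $\mathbf a=1+x$ and $\mathbf b=1-x$, or their rescalings depending on normalization. The labelling rule assigns opposite labels to the two Seifert circles meeting at a crossing, so $\mathfrak s_o=\mathbf a\otimes\mathbf b$ or $\mathbf b\otimes\mathbf a$. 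Expanding,
\[
\mathbf a\otimes\mathbf b = 11-1x+x1-xx .
\]
Applying the Khovanov merge $m$ from~\eqref{eq:multiplication} gives
\[
\partial^{\mathrm{KH}}(\mathbf a\otimes\mathbf b)=1-x+x-0=1\neq 0 .
\]
The other labelling gives $\partial^{\mathrm{KH}}(\mathbf b\otimes\mathbf a)=1$ by the same computation.

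To make the argument robust against differing sign or normalization conventions for $\mathbf a,\mathbf b$, I would also note that every choice of labels fails. For example, $m(\mathbf a\otimes\mathbf a)=1+2x\neq0$. In every case, the $q$-degree-$2$ component $11$ of $\mathfrak s_o$ is sent to $1$, and no other component of $\mathfrak s_o$ can cancel this, because $m$ preserves $q$-degree. Hence no $\pm$-labelled Lee generator of $TU_1$ is Khovanov-closed, so none is Khovanov-harmonic.

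The main obstacle is bookkeeping rather than mathematics: pinning down Lee's orientation and labelling conventions so that the chosen $\mathfrak s_o$ is genuinely one of Lee's canonical representatives. This is why I would argue via the degree-$2$ component, which fails for any labelling. If a less degenerate example is wanted, the same check applies to the standard trefoil diagram $D_{3_1}$. There the oriented resolution is again the all-zero resolution, and merging adjacent opposite labels again produces a nonzero term coming from the $1\otimes1$ component.
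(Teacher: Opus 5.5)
Your proposal is correct and is essentially the paper's proof. It uses the same one-crossing unknot, the same generator $\mathbf a\otimes\mathbf b$ with $\mathbf a=1+x$, $\mathbf b=1-x$, and the same computation $m(\mathbf a\otimes\mathbf b)=1\neq 0$ to show the generator is not a Khovanov cycle, hence not harmonic; the paper additionally checks $m_{\mathrm{Lee}}(\mathbf a\otimes\mathbf b)=0$, while you add the convention-independent $q$-degree argument. One caveat on your optional trefoil aside: the paper's $D_{3_1}$ (from its torsion example) has three negative crossings, so its oriented resolution is the all-one resolution, where $\partial^{\mathrm{KH}}$ vanishes, and that remark works only for the mirror diagram.
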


\begin{proof}
Consider the one-crossing diagram of the unknot (see Section \ref{ssec:twisted-unknot-graphs} and Example \ref{exam:twisted}). Its relevant two-term Khovanov chain complex is
\[
0 \longrightarrow V\otimes V \xrightarrow{m} V \longrightarrow 0.
\]
Let
\[
a=1+x,\qquad b=1-x.
\]
When interpreted as a Lee chain complex the multiplication becomes
\[
m_{\mathrm{Lee}}=m+m_\Phi,
\]
where \(m_\Phi(x\otimes x)=1\) and \(m_\Phi\) vanishes on
\(1\otimes 1\), \(1\otimes x\), and \(x\otimes 1\). Hence
\[
m_{\mathrm{Lee}}(a\otimes b)
=
m_{\mathrm{Lee}}\bigl((1+x)\otimes(1-x)\bigr)
=
0.
\]
However, with the ordinary Khovanov differential,
\[
\partial_{\mathrm{KH}}(a\otimes b)
=
m\bigl((1+x)\otimes(1-x)\bigr)
=
1\neq 0.
\]
Therefore \(a\otimes b\) is a Lee cycle but not a Khovanov cycle, and hence cannot be
a harmonic representative for Khovanov homology. 
\end{proof}

 The next theorem follows from the homological perturbation lemma, see for instance \cite{Romero2026}. 

\begin{thm}[Hodge transfer of Lee's perturbation]\label{thm:lee-hodge-transfer}
To more easily distinguish between Khovanov and Lee settings, we modify the notation from Sections \ref{sec:hodge} and \ref{sec:khovanov-background}, let
\[
C=C_{\mathrm{KH}}(D;\mathbb C),\qquad d=\partial_{\mathrm{KH}},
\qquad \Phi=\partial_{\mathrm{Lee}}-\partial_{\mathrm{KH}}, \qquad
L_{\mathrm{KH}}=dd^\dagger+d^\dagger d, \qquad
\mathcal H=\ker L_{\mathrm{KH}}.
\]
Let \(P:C\to \mathcal H\) be the orthogonal projection, let
\(\iota:\mathcal H\hookrightarrow C\) be the inclusion, and let \(G\) be the Green
operator, defined by inverting \(L_{\mathrm{KH}}\) on \(\mathcal H^\perp\) and setting
\(G=0\) on \(\mathcal H\). Set
\[
h=d^\dagger G.
\]
Then
\[
dh+hd=\mathrm{id}_C-\iota P,\qquad P\iota=\mathrm{id}_{\mathcal H},
\]
so \((P,\iota,h)\) is a contraction of \((C,d)\) onto \((\mathcal H,0)\).

The homological perturbation lemma transfers the Lee differential
\(d+\Phi\) to a filtered differential
\[
D_{\mathcal H}
=
P\Phi\sum_{r\geq 0}(-1)^r(h\Phi)^r\iota
\]
on \(\mathcal H\). The resulting complex \((\mathcal H,D_{\mathcal H})\) is chain homotopy equivalent to
\((C,\partial_{\mathrm{Lee}})\). In particular,
\[
\mathrm{H}^*(\mathcal H,D_{\mathcal H})\cong \mathrm{Lee}(K;\mathbb C).
\]
Moreover, the Lee spectral sequence may be computed on the harmonic
Khovanov representatives \(\mathcal H=\ker L_{\mathrm{KH}}\). If a class
\(\alpha\in \mathcal H\) survives to the page on which the term with \(r\) factors of
\(\Phi\) appears, then the corresponding page differential is represented by
\[
d_r(\alpha)
=
(-1)^{r-1}P\Phi(h\Phi)^{r-1}\iota(\alpha),
\]
where the displayed sign follows from the perturbation convention in the series defining $D_{\mathcal H}$. 
\end{thm}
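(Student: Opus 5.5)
The proof has three parts: a contraction, the perturbation lemma, and the filtration bookkeeping.

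\textbf{Step 1: the contraction.} First I check that $(P,\iota,h)$ with $h=d^\dagger G$ is a contraction of $(C,d)$ onto $(\mathcal H,0)$. The Green operator $G$ commutes with $d$ and $d^\dagger$, because $L_{\mathrm{KH}}$ does and $G$ is a function of $L_{\mathrm{KH}}$. Hence
\[
dh+hd=dd^\dagger G+d^\dagger G d=(dd^\dagger+d^\dagger d)G=L_{\mathrm{KH}}G .
\]
By construction $L_{\mathrm{KH}}G$ is the orthogonal projection onto $\mathcal H^\perp$, namely $\mathrm{id}_C-\iota P$. The identity $P\iota=\mathrm{id}_{\mathcal H}$ is immediate. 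The side conditions $h\iota=0$, $Ph=0$ and $h^2=0$ follow from $G|_{\mathcal H}=0$, from $\Im d^\dagger\perp\mathcal H$, and from $d^\dagger d^\dagger=0$ together with commutation. I would also note that this $h$ agrees with the homotopy of Proposition~\ref{prop:defret}.

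\textbf{Step 2: the perturbation lemma.} Next I apply the basic perturbation lemma to the perturbation $\Phi$. Lee's perturbation raises the $q$-grading by $4$. Since $d$, $d^\dagger$ and $G$ preserve $q$-degree, $h\Phi$ raises $q$ by $4$ and lowers nothing. The $q$-range of the finite complex is bounded, so $h\Phi$ is nilpotent and the series $\sum_r(-1)^r(h\Phi)^r$ terminates. The lemma then produces the transferred differential $D_{\mathcal H}=P\Phi\sum_r(-1)^r(h\Phi)^r\iota$ on $\mathcal H$. It also produces perturbed maps $\iota_\infty$, $P_\infty$ and $h_\infty$ that form a contraction of $(C,d+\Phi)$ onto $(\mathcal H,D_{\mathcal H})$. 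In particular the two complexes are chain homotopy equivalent, and their cohomologies agree with $\mathrm{Lee}(K;\C)$.

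\textbf{Step 3: the spectral sequence.} I filter both sides by $q$-degree, with $\Phi$ raising the filtration. The term of $D_{\mathcal H}$ with $r$ factors of $\Phi$ has pure filtration shift $4r$. The perturbed maps $\iota_\infty$ and $P_\infty$ are filtered, and their associated graded pieces are $\iota$ and $P$. These induce isomorphisms on $E_1$, since $E_1$ is Khovanov homology $\cong\mathcal H$ on both sides. By the comparison theorem they therefore give isomorphisms on all subsequent pages, so the Lee spectral sequence of $C$ is isomorphic to the spectral sequence of the filtered complex $(\mathcal H,D_{\mathcal H})$.

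For the latter complex, the $E_1$ differential is zero because the filtration-preserving part of $D_{\mathcal H}$ vanishes. On a class surviving to the page where the $4r$-shift first appears, the page differential is the homogeneous component of shift $4r$. With the sign convention of the series, that component is $(-1)^{r-1}P\Phi(h\Phi)^{r-1}\iota$. Here I would use the standard fact that for a complex with differential $\sum_k D_k$ of pure shifts, $d_r$ is induced by the component $D_r$ only modulo corrections from earlier zig-zags. I would state the formula as a representative, in the same hedged sense as the theorem.

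\textbf{Main obstacle.} I expect the main obstacle to be Step 3: justifying that the higher page differentials are represented by the single homogeneous term rather than by zig-zag expressions. My plan is to invoke the explicit perturbation framework of \cite{Romero2026}, which identifies the spectral sequence of the perturbed filtered complex with that of the transferred one. The identification of grading conventions, meaning which page index corresponds to shift $4r$, I treat as bookkeeping.
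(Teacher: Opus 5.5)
Your proposal is correct and follows the paper's route: a contraction onto $\mathcal H$, then the perturbation lemma with nilpotency coming from $\Phi$ strictly raising the $q$-filtration while $h$ preserves it, then passage to the spectral sequence. You are more explicit than the paper at two points, both of which are fine:
\begin{itemize}
\item You verify $dh+hd=L_{\mathrm{KH}}G=\mathrm{id}_C-\iota P$ and the side conditions directly, where the paper cites Section~\ref{subsec:Hom-pert}.
\item You justify the spectral-sequence identification through the filtered maps $\iota_\infty,P_\infty$ and the comparison theorem, where the paper only asserts it from the chain homotopy equivalence.
\end{itemize}
Your caveat that $d_r$ agrees with the single homogeneous term only modulo zig-zag corrections is accurate and is not addressed in the paper. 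One small correction: on $(\mathcal H,D_{\mathcal H})$ it is the $E_0$ differential that vanishes, not the $E_1$ one; in the theorem's indexing $d_1$ is $P\Phi\iota$.
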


\begin{proof}
The fact that \((P,\iota,h)\) is a contraction of the Khovanov complex
\((C,d)\) onto its harmonic subspace \((\mathcal H,0)\) follows from the results in Section \ref{subsec:Hom-pert}.

Since \(\partial_{\mathrm{Lee}}=d+\Phi\) is a differential and
\(\Phi\) strictly increases the Lee filtration, the perturbation is nilpotent
since the filtered complex is finite. Hence we can apply the homological perturbation lemma and transfer \(d+\Phi\) to the differential
\[
D_{\mathcal H}
=
P\Phi\sum_{r\geq 0}(-1)^r(h\Phi)^r\iota
\]
on \(\mathcal H\), producing a chain homotopy equivalence
\[
(\mathcal H,D_{\mathcal H})\simeq (C,\partial_{\mathrm{Lee}}).
\]
The associated filtered spectral sequence is therefore equivalent to the Lee spectral sequence.
\end{proof}

 \begin{remark}
    This approach generalizes to any spectral sequence originating from a perturbation to the Khovanov differential over $\Q$. This is a very narrow condition. For instance, this approach is not applicable to the Szabó geometric spectral sequence~\cite{https://doi.org/10.1112/jtopol/jtv027} as that spectral sequence is only defined over $\Z/2\Z$.   \end{remark} 

\section{Concluding remarks and future directions}\label{sec:future}

The results of this paper raise several questions that we have not addressed.
The bidegree computations of Section~\ref{sec:graph} identify the smallest eigenvalue in the bidegrees where it is consistently observed numerically. Proving, or characterizing precisely, when these observed bidegrees contain the global minimum of the Khovanov Laplacian for a given diagram remains open. More generally, the signless graph model yields inverse-polynomial lower bounds in fixed-width bands near the extremal $q$-degrees when the corresponding rational Khovanov cohomology vanishes. In bulk $q$-degrees the graph may have exponentially many vertices, so an extension there would require stronger structural estimates for the bipartiteness parameter $\Psi$, or polynomial control of the relevant connected components, rather than a sharper bound on the total vertex count alone.

A related question concerns the behavior of the extreme-degree graphs under local diagram modifications. In favorable situations, Reidemeister moves and other local changes may correspond to controlled graph operations such as vertex insertion, edge insertion, contraction, or loop-weight changes. A graph-level description of these operations would clarify how diagrammatic geometry affects the non-harmonic spectrum. We do not expect such a description, by itself, to imply monotonicity of spectral gaps under Reidemeister moves. The identification of harmonic representatives as $\pm 1$-vertex two-colorings (Corollary~\ref{cor:harmonic-2-coloring}) suggests a parallel question on the torsion side of determining which combinatorial features of the associated graph determine the integer torsion in $\mathrm{KH}^*(K;\Z)$.

Beyond these immediate questions, it is natural to ask which tools from spectral and discrete geometry extend to the Khovanov setting. Hodge Laplacians appear in the definition of combinatorial curvature on simplicial complexes, where discrete Gauss--Bonnet theorems recover the Euler characteristic in low dimensions~\cite{Watanabe2019}. The naive translation of this curvature to Khovanov homology does not satisfy a Gauss--Bonnet-style identity, but one might hope to manipulate the Khovanov Laplacians to produce a notion of curvature of a knot diagram whose Gauss--Bonnet integral recovers the Jones polynomial. In a similar vein, the Laplacian spectrum of a simplicial complex controls random walks on the complex~\cite{Mukherjee2016} and yields Cheeger-type inequalities~\cite{Steenbergen2014}, both rooted in classical results from graph theory~\cite{Fiedler1973, chung1997spectral}. Reinterpreting these results for Khovanov homology requires nontrivial work because of the signs on the cube of resolutions, but the resulting theory would describe diffusion across enhanced states.

Finally, many classical applications of torsion, such as Reidemeister's classification of lens spaces~\cite{Reidemeister1935}, rely on twisting chain complexes to make them acyclic, with the twisting encoding information about the fundamental group and universal cover. A suitable notion of fundamental group and universal covering space for the Khovanov chain complex of a knot diagram would allow more intricate torsion invariants and, in particular, extend Proposition~\ref{prop:torsion of reidemeister moves} to arbitrary knot cobordisms.

%\bibliographystyle{amsplain}
%\bibliography{bibharmonic}

\begin{thebibliography}{10}

\bibitem{Bar_Natan_2005}
D.~Bar-Natan, \emph{Khovanov’s homology for tangles and cobordisms}, Geometry \& Topology \textbf{9} (2005), no.~3, 1443–1499, \href{https://arxiv.org/abs/math/0410495}{arXiv:math/0410495}.

\bibitem{BismutZhang1992}
J.-M. Bismut and W.~Zhang, \emph{An extension of a theorem by {C}heeger and {M}\"uller}, Ast\'erisque, no. 205, Soci\'et\'e Math\'ematique de France, 1992, With an appendix by Fran\c{c}ois Laudenbach.

\bibitem{MR2882891}
A.~E. Brouwer and W.~H. Haemers, \emph{Spectra of graphs}, Universitext, Springer, New York, 2012.

\bibitem{Cheeger1979}
J.~Cheeger, \emph{Analytic torsion and the heat equation}, Annals of Mathematics \textbf{109} (1979), no.~2, 259--321.

\bibitem{chung1997spectral}
F.~R.~K. Chung, \emph{Spectral graph theory}, CBMS Regional Conference Series in Mathematics, vol.~92, American Mathematical Society, Providence, RI, 1997.

\bibitem{CHUNG2009}
J.-W. Chung and X.-S. Lin, \emph{Torsion of quasi-isomorphisms}, Journal of Knot Theory and Its Ramifications \textbf{18} (2009), no.~09, 1227–1258, \href{https://arxiv.org/abs/math/0608459}{arXiv:math/0608459}.

\bibitem{Desai}
M.~Desai and V.~Rao, \emph{A characterization of the smallest eigenvalue of a graph}, Journal of Graph Theory \textbf{18} (1994), no.~2, 181--194.

\bibitem{Eckmann1945}
B.~Eckmann, \emph{Harmonische funktionen und randwertaufgaben in einem komplex}, Commentarii Mathematici Helvetici \textbf{17} (1945), 240--255.

\bibitem{Fiedler1973}
M.~Fiedler, \emph{Algebraic connectivity of graphs}, Czechoslovak Mathematical Journal \textbf{23} (1973), no.~2, 298--305.

\bibitem{Franz1935}
W.~Franz, \emph{{\"U}ber die {T}orsion einer {\"u}berdeckung}, Journal f\"ur die reine und angewandte Mathematik \textbf{173} (1935), 245--254.

\bibitem{MR1622290}
J.~Friedman, \emph{Computing {B}etti numbers via combinatorial {L}aplacians}, Algorithmica \textbf{21} (1998), no.~4, 331--346. \MR{1622290}

\bibitem{HAEMERS2004199}
W.~H. Haemers and E.~Spence, \emph{Enumeration of cospectral graphs}, European Journal of Combinatorics \textbf{25} (2004), no.~2, 199--211, In memory of Jaap Seidel.

\bibitem{Jones2025}
B.~Jones and G.-W. Wei, \emph{Khovanov laplacian and khovanov dirac for knots and links}, Journal of Physics: Complexity \textbf{6} (2025), no.~2, 025014, \href{https://arxiv.org/abs/2411.18841}{arXiv:2411.18841}.

\bibitem{Khovanov2000}
M.~Khovanov, \emph{A categorification of the jones polynomial}, Duke Mathematical Journal \textbf{101} (2000), no.~3, 359--426, \href{https://arxiv.org/pdf/math/9908171}{arXiv:math/9908171}.

\bibitem{L2013}
T.~T.~Q. L\^e, \emph{Growth of regulators in finite abelian coverings}, Algebraic \& Geometric Topology \textbf{13} (2013), no.~4, 2383–2404, \href{https://arxiv.org/abs/1212.6777}{arXiv:1212.6777}.

\bibitem{Lee2005}
E.~S. Lee, \emph{An endomorphism of the khovanov invariant}, Advances in Mathematics \textbf{197} (2005), no.~2, 554–586, \href{https://arxiv.org/abs/math/0210213}{arXiv:math/0210213}.

\bibitem{Liu}
J.~Liu, J.~Li, and J.~Wu, \emph{The algebraic stability for persistent {L}aplacians}, Homology, Homotopy and Applications \textbf{26} (2024), no.~2, 297–323, \href{https://arxiv.org/abs/2302.03902}{arXiv:2302.03902}.

\bibitem{mnev2014lecturenotestorsions}
P.~Mnev, \emph{Lecture notes on torsions}, 2014, \href{https://arxiv.org/abs/1406.3705}{arXiv:1406.3705}.

\bibitem{Mukherjee2016}
S.~Mukherjee and J.~Steenbergen, \emph{Random walks on simplicial complexes and harmonics}, Random Structures \& Algorithms \textbf{49} (2016), no.~2, 379–405, \href{https://arxiv.org/abs/1310.5099}{arXiv:1310.5099}.

\bibitem{Muller1978}
W.~M{\"u}ller, \emph{Analytic torsion and {$R$}-torsion of riemannian manifolds}, Advances in Mathematics \textbf{28} (1978), no.~3, 233--305.

\bibitem{Mmoli2022}
F.~Mémoli, Z.~Wan, and Y.~Wang, \emph{Persistent {L}aplacians: Properties, algorithms and implications}, SIAM Journal on Mathematics of Data Science \textbf{4} (2022), no.~2, 858–884.

\bibitem{ORTIZNAVARRO2012}
J.~Ortiz-Navarro, \emph{A volume form on the khovanov invariant}, Journal of Knot Theory and Its Ramifications \textbf{21} (2012), no.~04, 1250032, \href{https://arxiv.org/abs/0812.0151}{arXiv:0812.0151}.

\bibitem{Rasmussen2010}
J.~Rasmussen, \emph{Khovanov homology and the slice genus}, Inventiones mathematicae \textbf{182} (2010), no.~2, 419–447, \href{https://arxiv.org/abs/math/0402131}{arXiv:math/0402131}.

\bibitem{RaySinger1971}
D.~B. Ray and I.~M. Singer, \emph{{$R$}-torsion and the {L}aplacian on riemannian manifolds}, Advances in Mathematics \textbf{7} (1971), no.~2, 145--210.

\bibitem{Reidemeister1935}
K.~Reidemeister, \emph{Homotopieringe und linsenr\"{a}ume}, Abhandlungen aus dem Mathematischen Seminar der Universit\"{a}t Hamburg \textbf{11} (1935), no.~1, 102–109.

\bibitem{Romero2026}
A.~Romero, \emph{From homological perturbation to spectral sequences: a case study}, 2006, \href{https://www.unirioja.es/cu/anromero/GIFT06.pdf}{unirioja.es}.

\bibitem{Sazdanovi2022}
R.~Sazdanović and D.~Scofield, \emph{Extremal khovanov homology and the girth of a knot}, Journal of Knot Theory and Its Ramifications \textbf{31} (2022), no.~13, 2250083, \href{https://arxiv.org/abs/2003.05074}{arXiv:2003.05074}.

\bibitem{schmidhuber2025quantumalgorithmkhovanovhomology}
A.~Schmidhuber, M.~Reilly, P.~Zanardi, S.~Lloyd, and A.~D. Lauda, \emph{A quantum algorithm for khovanov homology}, 2025, \href{https://arxiv.org/abs/2501.12378}{arXiv:2501.12378}.

\bibitem{Silver2019}
D.~S. Silver and S.~G. Williams, \emph{Knot invariants from laplacian matrices}, Journal of Knot Theory and Its Ramifications \textbf{28} (2019), no.~09, 1950058, \href{https://arxiv.org/abs/1809.06492}{arXiv:1809.06492}.

\bibitem{stacks-project}
{Stacks Project Authors}, \emph{\textit{Stacks Project}}, \url{https://stacks.math.columbia.edu}, 2018.

\bibitem{Steenbergen2014}
J.~Steenbergen, C.~Klivans, and S.~Mukherjee, \emph{A cheeger-type inequality on simplicial complexes}, Advances in Applied Mathematics \textbf{56} (2014), 56–77, \href{https://arxiv.org/abs/2302.01069}{arXiv:2302.01069}.

\bibitem{https://doi.org/10.1112/jtopol/jtv027}
Zoltán Szabó, \emph{A geometric spectral sequence in khovanov homology}, Journal of Topology \textbf{8} (2015), no.~4, 1017--1044.

\bibitem{Turaev2001}
V.~Turaev, \emph{Introduction to combinatorial torsions}, Birkh\"{a}user Basel, 2001.

\bibitem{WangNguyenWei2020}
R.~Wang, D.~D. Nguyen, and G.-W. Wei, \emph{Persistent spectral graph}, International Journal for Numerical Methods in Biomedical Engineering \textbf{36} (2020), no.~9, e3376.

\bibitem{Watanabe2019}
K.~Watanabe, \emph{Combinatorial {R}icci curvature on cell-complex and {G}auss-{B}onnnet theorem}, Tohoku Mathematical Journal \textbf{71} (2019), no.~4.

\bibitem{WeiWei2025}
X.~Wei and G.-W. Wei, \emph{Persistent topological {L}aplacians---a survey}, Mathematics \textbf{13} (2025), no.~2, 208.

\end{thebibliography}

providecommand{\bysame}{\leavevmode\hbox to3em{\hrulefill}\thinspace}
\providecommand{\MR}{\relax\ifhmode\unskip\space\fi MR }
% \MRhref is called by the amsart/book/proc definition of \MR.
\providecommand{\MRhref}[2]{%
  \href{http://www.ams.org/mathscinet-getitem?mr=#1}{#2}
}
\providecommand{\href}[2]{#2}

\end{document}